\definecolor{uuuuuu}{rgb}{0.27,0.27,0.27}
\definecolor{sqsqsq}{rgb}{0.1255,0.1255,0.1255}
\newtheorem{definition}{Definition} [section]
\newtheorem{theorem}[definition]{Theorem}
\newtheorem{lemma}[definition]{Lemma}
\newtheorem{claim}[definition]{Claim}
\newtheorem{problem}[definition]{Problem}
\newtheorem{observation}[definition]{Observation}
\begin{document}
\title{\bf\Large Cancellative hypergraphs and Steiner triple systems}

\date{\today}

\author{
Xizhi Liu
\thanks{Department of Mathematics, Statistics, and Computer Science, University of Illinois, Chicago, IL, 60607 USA. Email: xliu246@uic.edu.
Research partially supported by NSF awards DMS-1763317 and DMS-1952767.}
}
\maketitle
%\footnote{footnote}
%%%%%%%%%%%%%%%%%%%%%%%%%%%%%%%%%%%%%%%%%%%%%%%%%
\begin{abstract}
A triple system is cancellative if it does not contain three distinct sets $A,B,C$ such that
the symmetric difference of $A$ and $B$ is contained in $C$.
We show that every cancellative triple system $\mathcal{H}$ that satisfies certain
inequality between the sizes of $\mathcal{H}$ and its shadow
must be structurally close to the balanced blowup of some Steiner triple system.
Our result contains a stability theorem for cancellative triple systems due to Keevash and Mubayi as a special case.
It also implies that the boundary of the feasible region of cancellative triple systems has
infinitely many local maxima, thus giving the first example showing this phenomenon.

\medskip

\textbf{Keywords}: hypergraph Tur\'{a}n problem, stability, cancellative triple system, Steiner triple system, feasible region.
\end{abstract}
%%%%%%%%%%%%%%%%%%%%%%%%%%%%%%%%%%%%%%%%%%%%%%%%%

\section{Introduction}\label{SEC:Introduction}
Let $r\ge 2$ and $\mathcal{F}$ be a family of $r$-graphs.
An $r$-graph is {\em $\mathcal{F}$-free} if it does not contain any member of $\mathcal{F}$ as a subgraph.
The {\em Tur\'{a}n number} ${\rm ex}(n,\mathcal{F})$ of $\mathcal{F}$ is the maximum size of an $\mathcal{F}$-free $r$-graph on $n$ vertices,
and the {\em Tur\'{a}n density} of $\mathcal{F}$ is $\pi(\mathcal{F}) = \lim_{n\to \infty}{\rm ex}(n,\mathcal{F})/\binom{n}{r}$.
A family $\mathcal{F}$ is {\em nondegenerate} if $\pi(\mathcal{F}) > 0$.
It is one of the central problems in extremal combinatorics to determine ${\rm ex}(n,\mathcal{F})$ for various families $\mathcal{F}$.

Much is known about ${\rm ex}(n,\mathcal{F})$ when $r=2$ and one the most famous results in this regard is Tur\'{a}n's theorem \cite{TU41},
which states that for $\ell \ge 2$ the Tur\'{a}n number ${\rm ex}(n,K_{\ell+1})$ is uniquely achieved by
$T(n,\ell)$ which is the $\ell$-partite graph on $n$ vertices with the maximum number of edges.
However, for $r\ge 3$ determining ${\rm ex}(n,\mathcal{F})$, even $\pi(\mathcal{F})$, is notoriously hard in general.
Compared to the case $r=2$, very little is known about ${\rm ex}(n,\mathcal{F})$ for $r\ge 3$,
and we refer the reader to \cite{KE11} for results before 2011.

For every integer $r\ge 2$ let $\mathcal{T}_{r}$ be the family of $r$-graphs with at most $2r-1$ vertices
and three edges $A,B,C$ such that $A\triangle B \subset C$.
An $r$-graph $\mathcal{H}$ is {\em cancellative} if it has the property that $A\cup B = A\cup C$ implies $B = C$,
where $A,B,C$ are edges in $\mathcal{H}$.
It is easy to see that an $r$-graph is cancellative if and only if it is $\mathcal{T}_{r}$-free.

Let $\ell\ge r \ge 2$ be integers and let $V_{1} \cup \cdots \cup V_{\ell}$ be a partition of $[n]$
with each $V_{i}$ of size either $\lfloor n/\ell \rfloor$ or $\lceil n/\ell \rceil$.
The {\em generalized Tur\'{a}n graph} $T_{r}(n,\ell)$ is the collection of all $r$-subsets of $[n]$ that have
at most one vertex in each $V_{i}$. Let $t_{r}(n,\ell) = |T_{r}(n,\ell)| \sim \binom{\ell}{r}(n/\ell)^r$.

In the 1960's, Katona raised the problem of determining the maximum size of a cancellative $3$-graph on $n$ vertices and
conjectured that the maximum size is achieved by $T_{3}(n,3)$.
Katona's conjecture was later proved by Bollob\'{a}s \cite{BO74},
and it is perhaps the first exact result for a nondegenerate hypergraph family.

\begin{theorem}[Bollob\'{a}s \cite{BO74}]\label{thm-Bollobas-cancel-3}
A cancellative $3$-graph on $n$ vertices has size at most $t_{3}(n,3)$, with equality only for $T_{3}(n,3)$.
\end{theorem}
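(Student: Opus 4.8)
I would deduce Theorem~\ref{thm-Bollobas-cancel-3} from a local reformulation of cancellativity followed by a Zykov-type symmetrization. For a pair $p=\{x,y\}\subset[n]$ write $N_{\mathcal H}(p)=\{z:\{x,y,z\}\in\mathcal H\}$ and let $\partial\mathcal H$ denote the \emph{shadow graph} $\{p:N_{\mathcal H}(p)\neq\emptyset\}$ on $[n]$. The first step is the observation that \emph{$\mathcal H$ is cancellative if and only if $N_{\mathcal H}(p)$ is an independent set of $\partial\mathcal H$ for every pair $p$.} Indeed, if $A,B,C$ are distinct edges with $A\triangle B\subseteq C$, then $|A\triangle B|=6-2|A\cap B|\le 3$ forces $|A\cap B|=2$; writing $A=\{x,y\}\cup\{a\}$, $B=\{x,y\}\cup\{b\}$ with $a\ne b$, we get $a,b\in N_{\mathcal H}(\{x,y\})$ while $\{a,b\}=A\triangle B\subseteq C$ shows $\{a,b\}\in\partial\mathcal H$, so $N_{\mathcal H}(\{x,y\})$ is not independent. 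Conversely, $a,b\in N_{\mathcal H}(p)$ with $\{a,b\}\in\partial\mathcal H$ yield the edges $p\cup\{a\}$, $p\cup\{b\}$ and an edge through $\{a,b\}$, which span at most five vertices and form a member of $\mathcal T_3$. The same bookkeeping shows that the blow-up of a cancellative $3$-graph is again cancellative.

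Next, take a cancellative $\mathcal H$ on $[n]$ with the maximum number of edges, and say $u\simeq v$ if $u,v$ lie in no common edge. When $u\simeq v$ and $d_{\mathcal H}(u)\ge d_{\mathcal H}(v)$ I would \emph{clone $u$ onto $v$}: overwrite the link of $v$ by that of $u$, leaving every other edge unchanged; since $u,v$ share no edge, $u$ stays isolated in the new link of $v$, so $\simeq$ is preserved between them and $|\mathcal H|$ cannot decrease. The key claim is that cloning preserves cancellativity: if the new $3$-graph had distinct edges $A=\{x,y,a\}$, $B=\{x,y,b\}$ ($a\ne b$) and $C\supseteq\{a,b\}$, then replacing every occurrence of $v$ among these by $u$ would produce --- after a short case analysis on where $v$ sits among $x,y,a,b$ and the remaining vertex of $C$ --- three distinct edges of the old $\mathcal H$, on at most five vertices, with the same containment, contradicting cancellativity of $\mathcal H$. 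Iterating (and passing to a suitably extremal configuration to guarantee termination), I may assume $\mathcal H$ is stable under cloning; then $u\simeq v$ implies the links of $u$ and $v$ coincide, hence $\simeq$ is an equivalence relation, its classes $V_1,\dots,V_k$ consist of pairwise twins, and $\mathcal H$ is the blow-up $\mathcal H_0[\,|V_1|,\dots,|V_k|\,]$ of its quotient $3$-graph $\mathcal H_0$, in which every two distinct vertices lie in a common edge.

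Now $\mathcal H_0$ is cancellative and $\partial\mathcal H_0=K_k$, so by the reformulation $N_{\mathcal H_0}(p)$ is independent in $K_k$ --- hence of size at most $1$ --- for every pair $p$; since $\partial\mathcal H_0=K_k$, every pair of $\mathcal H_0$ has codegree exactly $1$. Thus $\mathcal H_0$ is a Steiner triple system on $k$ vertices: either $k\le 3$ (so $\mathcal H_0$ has at most one edge) or $k\ge 7$, and in all cases $|\mathcal H_0|=\binom k2/3$. Since $|\mathcal H|=\sum_{\{a,b,c\}\in\mathcal H_0}|V_a||V_b||V_c|$, it remains to run the (elementary, finite) optimization over the part sizes with $\sum_i|V_i|=n$: for $k=3$, with $\mathcal H_0$ a single edge, $\max|V_1||V_2||V_3|=t_3(n,3)$, attained only by the balanced partition --- that is, only by $T_3(n,3)$ --- while for $k\ge 7$ one checks that the sum stays strictly below $t_3(n,3)$, the best one can do being to concentrate the weight on a single edge of $\mathcal H_0$, which still loses because of the $k-3\ge 4$ remaining necessarily positive-size parts. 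This gives $|\mathcal H|\le t_3(n,3)$ with equality only for $T_3(n,3)$.

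The main obstacle is the cancellativity-preservation in the symmetrization step: one must treat every way the cloned vertex can appear among the (not necessarily distinct) vertices of a forbidden configuration and check that no new copy of $F_5$ or of $K_4^{(3)-}$ appears, and then handle the usual bookkeeping ensuring that the symmetrization terminates and that the equality characterization survives it. By contrast, Steps~1 and~3 are short once the reformulation is in hand, and the closing optimization over part sizes is routine.
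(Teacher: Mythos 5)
The paper itself does not prove Theorem~\ref{thm-Bollobas-cancel-3} (it is quoted from Bollob\'as), so your argument has to stand on its own. Your Step~1 is correct (it is the reformulation behind Observation~\ref{OBS:cancellative-pair-co-neighbor-is-independent}), blowups of cancellative $3$-graphs are indeed cancellative, and cloning does preserve cancellativity --- the cleanest way to see this is that the cloned graph is precisely a blowup of the induced subgraph $\mathcal H[V\setminus\{v\}]$, so your ``short case analysis'' is not the issue. The first genuine gap is the structural conclusion of the symmetrization. Edge-maximality only gives $d_{\mathcal H}(u)=d_{\mathcal H}(v)$ whenever $u\simeq v$, and (by cloning one vertex onto two others) transitivity of $\simeq$; it does \emph{not} give that the links of $\simeq$-equivalent vertices coincide, and for $3$-graphs, unlike graphs, knowing that $\simeq$ is an equivalence relation does not make $\mathcal H$ a blowup of its quotient. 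To get the twin structure you must iterate cloning with an explicit termination and preservation argument (a secondary potential, or a fixed order of clonings together with a verification that later clonings do not destroy earlier twins, and that vertices in distinct classes still share an edge at the end --- a single cloning can destroy all common edges of a cross pair). More seriously for the statement as written, symmetrization at best produces \emph{some} maximum cancellative graph equal to $T_3(n,3)$: a cloning step applied to an extremal graph keeps the edge count, so an arbitrary extremal $\mathcal H$ need not be stable under cloning, and the uniqueness claim ``equality only for $T_3(n,3)$'' does not survive your reduction without a separate argument that every extremal graph is already a blowup of a Steiner triple system. You flag both issues as ``bookkeeping'', but they are exactly where this approach has to earn the theorem.

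The second gap is the closing optimization, which is not routine: it is the analytic core once you have reduced to STS blowups. Your assertion that for $k\ge 7$ ``the best one can do is to concentrate the weight on a single edge of $\mathcal H_0$'' is precisely the nontrivial statement that every Steiner triple system has Lagrangian $1/27$, together with an exact integer refinement (needed because $t_3(n,3)$ is strictly below $n^3/27$ when $n$ is not divisible by $3$, so a clean $\le n^3/27$ bound does not even give the inequality, let alone strictness). None of the obvious estimates deliver this: the Kruskal--Katona/AM--GM route only gives a bound of order $n^3/6^{3/2}$, which is the other boundary curve $(\ref{equ:fesible-region-cancel-3-left})$, and crude matching bounds on the links are also too weak. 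The fact is provable --- for instance, take a maximizer of minimal support, observe that the support induces a sub-STS, and at a full-support critical point of an STS on $s\ge 7$ points sum the stationarity conditions, using that each pair lies in the link matching of exactly one vertex, to get a value at most $\frac{s-1}{6s^2}<\frac{1}{27}$, so the optimum concentrates on a single edge --- but nothing of this kind appears in your proposal, and without it (and the exact, all-parts-nonempty, integer version for the equality case) the final step is an unproved claim rather than a routine check.
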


Bollob\'{a}s conjectured that a similar result holds for all $r\ge 4$.
Sidorenko \cite{SI87} proved it for $r=4$,
but Shearer \cite{SH96} gave a construction showing
that Bollob\'{a}s' conjecture is false for all $r\ge 10$.
The number $\pi(\mathcal{T}_{r})$ is still unknown for all $r\ge 5$.

%%%%%%%%%%%%%%%%%%%%%%%%%%%%%%%%%
\subsection{Stability}\label{SUBSEC:intro-stability}
Keevash and Mubayi \cite{KM04} considered the stability property of $\mathcal{T}_{3}$ and
proved the following theorem.

\begin{theorem}[Keevash-Mubayi \cite{KM04}]\label{THM:KM-stability-cancel-3}
For every $\delta > 0$ there exists $\epsilon > 0$ and $n_0$ such that the following holds for all $n\ge n_0$.
Every cancellative $3$-graph $\mathcal{H}$ with $n$ vertices and at least $(1-\epsilon)t_{3}(n,3)$ edges
can be transformed into a subgraph of $T_{3}(n,3)$ by removing at most $\delta n^3$ edges.
\end{theorem}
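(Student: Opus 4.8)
The plan is to show that a near-extremal cancellative triple system is, up to $\delta n^{3}$ edges, the balanced blow-up of a single triangle, which is precisely $T_{3}(n,3)$. First I would clean $\mathcal{H}$: iteratively delete any vertex whose degree (number of edges through it) is smaller than $\left(\tfrac{2}{9}-\sqrt{\epsilon}\right)\binom{m-1}{2}$, where $m$ is the current number of vertices. Using Theorem~\ref{thm-Bollobas-cancel-3} to bound the number of edges at each stage, the standard computation shows only $o(n)$ vertices get deleted, so we may assume $\mathcal{H}$ has $n$ vertices, $|\mathcal{H}|\ge(1-o(1))t_{3}(n,3)=(1-o(1))\tfrac{n^{3}}{27}$, and minimum degree at least $\left(\tfrac{1}{9}-o(1)\right)n^{2}$. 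Since we are allowed to discard $\delta n^{3}$ edges, and reinserting the $o(n)$ deleted vertices together with deleting all $o(n^{3})$ edges through them is affordable, it suffices to prove the structural statement for this cleaned $\mathcal{H}$.

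The engine is three elementary facts, each obtained by exhibiting the relevant configuration $A\triangle B\subset C$. Writing $N_{xy}=\{w:\{x,y,w\}\in\mathcal{H}\}$ and $\partial\mathcal{H}$ for the shadow graph: (i) the link $L_{v}$ of every vertex $v$ is triangle-free; (ii) if $b,c\in N_{xy}$ then $\{b,c\}\notin\partial\mathcal{H}$, i.e. $N_{xy}$ is an independent set of $\partial\mathcal{H}$, so $\Delta(L_{v})\le\alpha(\partial\mathcal{H})$ for every $v$; (iii) if $\{x,y,z\}\in\mathcal{H}$ then $N_{xy},N_{xz},N_{yz}$ are pairwise disjoint, whence $|N_{xy}|+|N_{xz}|+|N_{yz}|\le n$. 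Combining (i) with the minimum degree bound, each $L_{v}$ is triangle-free with $\ge\left(\tfrac19-o(1)\right)n^{2}$ edges, so by Mantel it has at least $\left(\tfrac23-o(1)\right)n$ non-isolated vertices; hence $\delta(\partial\mathcal{H})\ge\left(\tfrac23-o(1)\right)n$ and therefore $\alpha(\partial\mathcal{H})\le n-\delta(\partial\mathcal{H})\le\left(\tfrac13+o(1)\right)n$. By (ii) every codegree satisfies $|N_{xy}|\le\left(\tfrac13+o(1)\right)n$, and since $\sum_{\{x,y\}}|N_{xy}|=3|\mathcal{H}|=(1-o(1))\tfrac{n^{3}}{9}$ this already forces $|\partial\mathcal{H}|\ge\left(\tfrac13-o(1)\right)n^{2}$.

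The remaining (and crucial) lemma is the matching upper bound: almost every pair of $\partial\mathcal{H}$ has codegree $\left(\tfrac13+o(1)\right)n$, equivalently $|\partial\mathcal{H}|\le\left(\tfrac13+o(1)\right)n^{2}$. Granting this, an averaging argument with (iii) produces an edge $\{x,y,z\}\in\mathcal{H}$ for which $|N_{xy}|,|N_{xz}|,|N_{yz}|$ are all at least $\left(\tfrac13-o(1)\right)n$. By (iii) these sets are pairwise disjoint, so $V_{1}:=N_{xy}$, $V_{2}:=N_{xz}$, $V_{3}:=N_{yz}$ partition all but $o(n)$ vertices into parts of size $\left(\tfrac13\pm o(1)\right)n$; place the leftover vertices into parts arbitrarily. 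If some edge had two vertices $u,v$ in a common part, say $u,v\in N_{xy}$, then $\{u,v\}\in\partial\mathcal{H}$, contradicting (ii); hence every non-transversal edge meets a leftover vertex, and there are only $o(n)\cdot\binom{n}{2}=o(n^{3})$ such edges. Deleting them, and then rebalancing the parts to sizes $\lfloor n/3\rfloor$ or $\lceil n/3\rceil$ by moving $o(n)$ vertices (which changes at most $o(n^{3})$ further edges), leaves $\mathcal{H}$ inside $T_{3}(n,3)$ after removing at most $\delta n^{3}$ edges, as required.

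The main obstacle is exactly this key lemma, i.e. the upper bound on $|\partial\mathcal{H}|$: facts (i)--(iii) and the minimum degree bound do not on their own exclude a ``spread-out'' shadow (a link that is a balanced blow-up of $C_{5}$ satisfies all of (i)--(iii) locally), so one must use near-extremality globally. I would derive it from a codegree/supersaturation argument: show that in a cancellative $n$-vertex triple system with $(1-o(1))\tfrac{n^{3}}{27}$ edges, all but $o(n^{2})$ shadow pairs have codegree $\left(\tfrac13+o(1)\right)n$, by combining Theorem~\ref{thm-Bollobas-cancel-3} (which gives $\sum_{\{x,y\}}|N_{xy}|=3|\mathcal{H}|\le(1+o(1))\tfrac{n^{3}}{9}$) with the per-edge constraint (iii) and a further cleaning step that discards any vertex lying in many pairs of small codegree. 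Establishing this near-regularity of codegrees is the technical heart of the proof; everything else above is routine.
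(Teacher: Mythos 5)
Your reduction and endgame are fine: facts (i)--(iii) are correct consequences of cancellativity, the cleaning step is standard, and once you have a single edge $\{x,y,z\}$ with $|N_{xy}|,|N_{xz}|,|N_{yz}|\ge(\tfrac13-o(1))n$, the three pairwise disjoint independent neighborhoods do give the tripartition and hence the statement. The genuine gap is exactly the step you flag as the ``technical heart'' and then do not prove: the bound $|\partial\mathcal{H}|\le(\tfrac13+o(1))n^2$ (equivalently, near-regularity of the codegrees), without which your averaging argument cannot produce the good edge. Moreover, the route you sketch for it --- Bollob\'as' theorem for the codegree sum, the per-edge disjointness (iii), and a cleaning step --- cannot close it, because it only uses statistics that do not determine the answer. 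Concretely, consider the profile $|\mathcal{H}|=(1-o(1))n^3/27$, $|\partial\mathcal{H}|\approx 0.45\,n^2$, all codegrees $\approx 0.247\,n$: it satisfies $\sum_{xy}|N_{xy}|=3|\mathcal{H}|$, every codegree is below $\alpha(\partial\mathcal{H})\le(\tfrac13+o(1))n$ (an independent set of size $0.25n$ is compatible with $0.45n^2$ shadow edges, since $\binom{0.25n}{2}<\binom n2-0.45n^2$), each edge has $|N_{xy}|+|N_{xz}|+|N_{yz}|\approx 0.74\,n\le n$, and the minimum shadow degree and triangle-free-link conditions are also consistent with it. In such a profile no edge has even one codegree near $n/3$, so nothing in your toolkit ``cleans'' its way to the key lemma; ruling such profiles out requires a global inequality pinning the shadow density near $2/3$, which is precisely inequality $(\ref{equ:fesible-region-cancel-3-right})$ of Theorem~\ref{THM:LM-cancel-3-feasible-region} (proved in \cite{LM19A} by a nontrivial clique-peeling induction), or the corresponding work in Keevash--Mubayi's original proof.

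For comparison, this is also how the present paper treats the statement: it is not reproved from scratch, but obtained as the $k=3$ case of Theorem~\ref{THM:mian-cancellative-steiner-stability}, using inequalities $(\ref{equ:fesible-region-cancel-3-left})$ and $(\ref{equ:fesible-region-cancel-3-right})$ to conclude that edge density near $2/9$ forces shadow density near $2/3$, after which the structural argument (Lemmas~\ref{LEMMA:cancellative-3-graph-stability-decompose-vertex-set} and~\ref{LEMMA:k-partite-cancellative-3-graph-stability}) coincides in spirit with your endgame. So your skeleton matches the known proofs, but the shadow-density control you assume is the one ingredient that actually carries the difficulty, and as written your proposal does not supply it.
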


{\bf Remarks.}
\begin{itemize}
\item
Keevash and Mubayi actually proved a slightly stronger result which shows that
the statement above holds even for $F_5$-free $3$-graphs, where $F_5 \in \mathcal{T}_{3}$
is the $3$-graph on $5$ vertices with edge set $\{123,124,345\}$.
\item
The author of the present paper gave another short proof in \cite{LIU19} which shows that a linear dependency
between $\delta$ and $\epsilon$ is sufficient for  Theorem~\ref{THM:KM-stability-cancel-3}.
\item
Pikhurko \cite{PI08} proved a similar (and stronger) stability theorem for cancellative $4$-graphs.
Both Theorem~\ref{THM:KM-stability-cancel-3} and Pikhurko's result were further strengthened in \cite{LMR2},
in which Andr\'{a}sfai-Erd\H{o}s-S\'{o}s-type results \cite{AES74} for $\mathcal{T}_{3}$ and $\mathcal{T}_{4}$ were proved.
\end{itemize}

For an $r$-graph $\mathcal{H}$ the {\em shadow} $\partial\mathcal{H}$ of $\mathcal{H}$ is
\[
\partial\mathcal{H} = \left\{ A \in \binom{V(\mathcal{H})}{r-1}: \text{$\exists B\in \mathcal{H}$ such that $A\subset B$} \right\}.
\]
In \cite{LM19A}, several inequalities concerning $\mathcal{H}$ and $\partial\mathcal{H}$ were proved for cancellative $3$-graphs $\mathcal{H}$.

\begin{theorem}[\cite{LM19A}]\label{THM:LM-cancel-3-feasible-region}
Let $n\ge 1$ be an integer and $x\in [0,1]$ be a real number.
Suppose that $\mathcal{H}$ is a cancellative $3$-graph on $n$ vertices with $|\partial\mathcal{H}| = xn^2/2$.
Then
\begin{align}\label{equ:fesible-region-cancel-3-left}
|\mathcal{H}| \le \left(\frac{x}{6}\right)^{3/2}n^3,
\end{align}
and
\begin{align}\label{equ:fesible-region-cancel-3-right}
|\mathcal{H}| \le \frac{x(1-x)}{6}n^3 + 3n^2.
\end{align}
\end{theorem}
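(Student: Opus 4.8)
The plan is to work through the local structure of cancellative triple systems. Write $G=\partial\mathcal{H}$ and, for a vertex $v$, let $L_v=\{ab : vab\in\mathcal{H}\}$ be its link, so that $\sum_v|L_v|=3|\mathcal{H}|$, $\bigcup_v L_v=G$, and also $3|\mathcal{H}|=\sum_{e\in G}d_{\mathcal{H}}(e)$, where $d_{\mathcal{H}}(e)$ denotes the codegree. Since the only nonvacuous forbidden configuration in $\mathcal{T}_3$ consists of two edges $xyz,xyw$ sharing a pair together with an edge $C\supseteq\{z,w\}$, cancellativity is equivalent to: for every pair $\{z,w\}\in G$ the links $L_z$ and $L_w$ are edge-disjoint; equivalently, for every $e\in G$ the codegree neighbourhood $N_{\mathcal{H}}(e):=\{v:v\cup e\in\mathcal{H}\}$ is an independent set of $G$. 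In particular each $L_v$ is triangle-free, $L_v\subseteq G[N_G(v)]$, and $N_{\mathcal{H}}(\{a,b\})\subseteq N_G(a)\cap N_G(b)$; moreover $L_v$ covers every $G$-neighbour of $v$, so $\tfrac12 d_G(v)\le|L_v|\le\tfrac14 d_G(v)^2$ (the upper bound being Mantel's theorem applied to $L_v$). These facts are the common starting point for both inequalities, and the two extremal configurations they must be tuned against are: the balanced blowup of a single triple (equivalently $T_3(s,3)$ on $s=n\sqrt{3x/2}$ vertices with the remaining vertices isolated) for \eqref{equ:fesible-region-cancel-3-left}, and the full $T_3(n,3)$ for \eqref{equ:fesible-region-cancel-3-right}.

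For \eqref{equ:fesible-region-cancel-3-left} it is cleanest to read the claim as the scale-free bound $|\mathcal{H}|\le(|\partial\mathcal{H}|/3)^{3/2}$, a cancellative strengthening of Kruskal--Katona. I would prove it by combining two estimates: first, Mantel on the links gives $3|\mathcal{H}|=\sum_v|L_v|\le\tfrac14\sum_v d_G(v)^2$ with $\sum_v d_G(v)=2|\partial\mathcal{H}|$; second, starting from $3|\mathcal{H}|=\sum_{e\in G}d_{\mathcal{H}}(e)$ and using that any $v\in N_{\mathcal{H}}(e)$ is $G$-nonadjacent to the other $d_{\mathcal{H}}(e)-1$ vertices of $N_{\mathcal{H}}(e)$, one gets $d_{\mathcal{H}}(e)\le n-d_G(v)$ for each such $v$, hence after averaging over $v$ and summing over $e$, together with Cauchy--Schwarz $\sum_e d_{\mathcal{H}}(e)^2\ge (3|\mathcal{H}|)^2/|\partial\mathcal{H}|$, a bound of the form $\tfrac{9|\mathcal{H}|^2}{|\partial\mathcal{H}|}\le\sum_v(n-d_G(v))\,|L_v|$. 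Feeding $\tfrac12 d_G(v)\le|L_v|\le\tfrac14 d_G(v)^2$ into these two inequalities and optimising the resulting one-parameter problem under $\sum_v d_G(v)=2|\partial\mathcal{H}|$ should yield $(|\partial\mathcal{H}|/3)^{3/2}$; the optimum --- all nonempty links complete balanced bipartite of equal size, all codegrees equal --- is exactly the blown-up-triple construction, which is why the exponent is $3/2$. (An alternative is a compression argument: show that suitable compressions preserve cancellativity without enlarging the shadow, reduce to a shifted $\mathcal{H}$, and read off the structure directly.)

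For \eqref{equ:fesible-region-cancel-3-right} the same identity $3|\mathcal{H}|=\sum_{e\in G}d_{\mathcal{H}}(e)$ is the entry point, but now the independence of $N_{\mathcal{H}}(e)$ is used as a statement about the \emph{complement} of the shadow: $N_{\mathcal{H}}(\{a,b\})$ is a clique of $\overline{G}$ lying inside $N_G(a)\cap N_G(b)$, so its size is simultaneously at most $d_G(a)$, at most $d_G(b)$, and at most what the available $\overline{G}$-edges allow. Counting $\sum_{e\in G}\binom{d_{\mathcal{H}}(e)}{2}$ --- which equals $\sum_{\{u,w\}\notin G}|L_u\cap L_w|$ --- against these bounds, applying Cauchy--Schwarz to pass back to $|\mathcal{H}|^2/|\partial\mathcal{H}|$, and using $\sum_v d_G(v)=xn^2$, produces the $x(1-x)$ shape, with the factor $(1-x)$ coming precisely from the complement/independence input; the additive $3n^2$ absorbs the lower-order rounding terms in the Cauchy--Schwarz and in $\binom{k}{2}$ versus $k^2/2$. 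This explains why \eqref{equ:fesible-region-cancel-3-right}, rather than \eqref{equ:fesible-region-cancel-3-left}, is the effective bound when the shadow is dense.

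The hard part, in both cases, is getting the constants exactly right rather than up to a multiplicative factor. A bare Mantel-plus-Cauchy--Schwarz combination gives a bound that touches the truth only at $x=2/3$ (where the extremal object is the whole of $T_3(n,3)$) and is genuinely weaker elsewhere, so one must select the precise pair of inequalities --- and the precise way of charging between vertices and shadow-pairs --- whose combination stays tight along the entire family of extremal configurations (blown-up triples plus isolated vertices for \eqref{equ:fesible-region-cancel-3-left}). Technically this forces one to control the functions $t\mapsto(n-t)t^2$ and $t\mapsto t(n-t)^2$, which are neither convex nor concave on $[0,n-1]$, using only the single linear constraint $\sum_v d_G(v)=xn^2$; this, and handling the dense-shadow regime sharply enough to extract $x(1-x)$ in \eqref{equ:fesible-region-cancel-3-right}, are where the real work lies, and where a compression/stability argument may ultimately be the more robust route.
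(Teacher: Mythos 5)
Your preliminary observations are all correct and are indeed the standard starting point: for a cancellative $\mathcal{H}$ each codegree neighbourhood $N_{\mathcal{H}}(e)$ is independent in $G=\partial\mathcal{H}$ (this is Observation~\ref{OBS:cancellative-pair-co-neighbor-is-independent} here), each link is triangle-free, $\tfrac12 d_G(v)\le|L_v|\le\tfrac14 d_G(v)^2$, $d_{\mathcal{H}}(e)\le n-d_G(v)$ for $v\in N_{\mathcal{H}}(e)$, and hence $9|\mathcal{H}|^2/|\partial\mathcal{H}|\le\sum_v\bigl(n-d_G(v)\bigr)|L_v|$. The gap is that the two decisive steps --- ``optimising \ldots should yield $(|\partial\mathcal{H}|/3)^{3/2}$'' and ``produces the $x(1-x)$ shape'' --- are only asserted, and the optimisation you describe provably cannot deliver them: the relaxation that retains only $\ell_v\le d_G(v)^2/4$, $\sum_v d_G(v)=xn^2$, $\sum_v\ell_v=3|\mathcal{H}|$ and $9|\mathcal{H}|^2/|\partial\mathcal{H}|\le\sum_v(n-d_G(v))\ell_v$ has feasible points violating both inequalities. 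For $(\ref{equ:fesible-region-cancel-3-left})$: take $\tfrac{3xn}{2}$ vertices with $d_v=\tfrac{2n}{3}$ and $\ell_v=\tfrac{n^2}{9}$ and the rest isolated; all constraints hold (the last with equality), yet $|\mathcal{H}|=xn^3/18$, which exceeds $(x/6)^{3/2}n^3$ by the factor $\sqrt{2/(3x)}>1$ for every $x<2/3$, i.e.\ exactly in the regime where $(\ref{equ:fesible-region-cancel-3-left})$ is the relevant bound. For $(\ref{equ:fesible-region-cancel-3-right})$ with $x=0.9$: take $\tfrac{n}{5}$ vertices with $d_v=\tfrac{2n}{3}$, $\ell_v=\tfrac{n^2}{9}$ and $\tfrac{4n}{5}$ vertices with $d_v=\tfrac{23n}{24}$, $\ell_v=\tfrac{n^2}{20}$; the degree sum is $0.9n^2$, the Mantel caps and the displayed inequality hold, but $|\mathcal{H}|=\tfrac{14}{675}n^3\approx0.021n^3$, well above $\tfrac{x(1-x)}{6}n^3=0.015n^3$. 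So the local data you feed into the one-parameter problem simply does not determine the theorem; some global input is indispensable --- e.g.\ that for an edge $uvw\in\mathcal{H}$ the three sets $N_{\mathcal{H}}(uv),N_{\mathcal{H}}(uw),N_{\mathcal{H}}(vw)$ are pairwise disjoint, that links of $G$-adjacent vertices are edge-disjoint, or a vertex/clique-removal induction --- and this is precisely the part your sketch postpones.

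Note also that this paper contains no proof of the statement (it is quoted from \cite{LM19A}), so there is nothing here to compare against step by step; but the machinery the paper does import points away from your route. Theorem~\ref{thm-calcel-3-induction-shadow} is stated for cancellative \emph{pairs} with an additive $3m^2$ error, and Lemma~\ref{LEMMA:cancellative-3-graphs-clique} and Lemma~\ref{lemma-cancel-3-exact-induction} bound what is lost when a clique of the shadow is removed; this strongly suggests that $(\ref{equ:fesible-region-cancel-3-right})$ is proved by an iterative removal/induction argument (whence the $+3n^2$), not by a one-shot Mantel-plus-Cauchy--Schwarz computation, and likewise $(\ref{equ:fesible-region-cancel-3-left})$ needs more than the link-wise Mantel bound. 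Your closing paragraph already concedes that getting the exact constants is the real work; the concrete failure above shows that the particular pair of inequalities you chose cannot be tuned to do it, so the proposal as it stands is a plausible-looking frame around a missing proof.
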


We will focus on inequality $(\ref{equ:fesible-region-cancel-3-right})$ in the present paper,
and show that it is closely related to the Steiner triple systems,
which is another classical topic that has been intensively studied in Combinatorics.

For a positive integer $k$ a $k$-vertex {\em Steiner triple system} (${\rm STS}$) is
a $3$-graph on $k$ vertices such that every pair of vertices is contained in exactly one edge.
It is known that a $k$-vertex ${\rm STS}$ exists if and only if $k \in 6\mathbb{N} + \{1,3\}$ (e.g. see \cite{RW03}),
where $6\mathbb{N} + \{1,3\}$ is the set of integers that congruent to $1$ or $3$ modulo $6$.

Let ${\rm STS}(k)$ denote the family of all Steiner triple systems on $k$ vertices up to isomorphism.
In particular, ${\rm STS}(3)$ contains only the $3$-graph $K_3^3$,
${\rm STS}(6)$ contains only the Fano plane (see Figure~\ref{fig:projective-plane-STS-7}),
and ${\rm STS}(9)$ contains only the affine plane of order $3$ (see Figure~\ref{fig:affine-plane-STS-9}).
For $k \in 6\mathbb{N} + \{1,3\}$ let $s_k = |{\rm STS}(k)|$.
Then $s_{3} = s_{7} = s_{9}=1$, $s_{13} = 2$ (see \cite{CO13}), $s_{15} = 80$ (see \cite{HS55}),
and Keevash (see \cite{KE18} and also \cite{WR73,EG81,FA81}) proved that $s_{k} = \left(k/e^2+o(k)\right)^{k^2/6}$.

\begin{figure}[htbp]
\centering
\subfigure{
\begin{minipage}[t]{0.45\linewidth}
\centering
\begin{tikzpicture}[xscale=2.5,yscale=2.5]
\node (a) at (-1,0) {};
\fill (a) circle (0.04);
\node (b) at (0,0) {};
\fill (b) circle (0.04);
\node (c) at (1,0) {};
\fill (c) circle (0.04);
\node (d) at (0.5,0.866025) {};
\fill (d) circle (0.04);
\node (e) at (0,1.732051) {};
\fill (e) circle (0.04);
\node (f) at (-0.5,0.866025) {};
\fill (f) circle (0.04);
\node (g) at (0,0.577350) {};
\fill (g) circle (0.04);
\node (h) at (0,-0.3) {};

\draw[line width=0.8pt] (0,0.577350) circle [radius = 0.577350];

\draw[line width=0.8pt] (-1,0) -- (1,0);
\draw[line width=0.8pt] (-1,0) -- (0,1.732051);
\draw[line width=0.8pt] (-1,0) -- (0.5,0.866025);
\draw[line width=0.8pt] (1,0) -- (0,1.732051);
\draw[line width=0.8pt] (1,0) -- (-0.5,0.866025);
\draw[line width=0.8pt] (0,0) -- (0,1.732051);
\end{tikzpicture}
\caption{The Fano plane.}
\label{fig:projective-plane-STS-7}
\end{minipage}
}
\centering
\subfigure{
\begin{minipage}[t]{0.45\linewidth}
\centering
\begin{tikzpicture}[xscale=1.5,yscale=1.5]
\node (a) at (-1,0) {};
\fill (a) circle (0.06);
\node (b) at (0,0) {};
\fill (b) circle (0.06);
\node (c) at (1,0) {};
\fill (c) circle (0.06);
\node (d) at (1,1) {};
\fill (d) circle (0.06);
\node (e) at (0,1) {};
\fill (e) circle (0.06);
\node (f) at (-1,1) {};
\fill (f) circle (0.06);
\node (g) at (-1,2) {};
\fill (g) circle (0.06);
\node (h) at (0,2) {};
\fill (h) circle (0.06);
\node (i) at (1,2) {};
\fill (i) circle (0.06);

\draw[line width=0.8pt] (-1-0.2,0) -- (1+0.2,0);
\draw[line width=0.8pt] (-1,0-0.2) -- (-1,2+0.2);
\draw[line width=0.8pt] (1,2+0.2) -- (1,0-0.2);
\draw[line width=0.8pt] (1+0.2,2) -- (-1-0.2,2);
\draw[line width=0.8pt] (0,0-0.2) -- (0,2+0.2);
\draw[line width=0.8pt] (-1-0.2,1) -- (1+0.2,1);

\draw[line width=0.8pt] (-1-0.2,2-0.2) to [out = 45, in = 225] (-1,2) to [out = 45, in = 135] (1.3,2.3)
                         to [out = 315, in = 45] (1,1) to [out = 225, in = 45] (0,0) to [out = 225, in = 45] (0-0.2,0-0.2);
\draw[line width=0.8pt] (1-0.2,2+0.2) to [out = 315, in = 135] (1,2) to [out = 315, in = 45] (1.3,-0.3)
                         to [out = 225, in = 315] (0,0) to [out = 135, in = 315] (-1,1) to [out = 135, in = 315] (-1-0.2,1+0.2);
\draw[line width=0.8pt] (1+0.2,0+0.2) to [out = 225, in = 45] (1,0) to [out = 225, in = 315] (-1.3,-0.3)
                         to [out = 135, in = 225] (-1,1) to [out = 45, in = 225] (0,2) to [out = 45, in = 225] (0+0.2,2+0.2);
\draw[line width=0.8pt] (-1+0.2,0-0.2) to [out = 135, in = 315] (-1,0) to [out = 135, in = 225] (-1.3,2.3)
                         to [out = 45, in = 135] (0,2) to [out = 315, in = 135] (1,1) to [out = 315, in = 135] (1+0.2,1-0.2);
\end{tikzpicture}
\caption{The affine plane of order $3$.}
\label{fig:affine-plane-STS-9}
\end{minipage}
}
\end{figure}

\begin{definition}\label{DFN:blowup}
An $r$-graph $\mathcal{H}$ is a blowup of another $r$-graph $\mathcal{G}$ if there exists a
map $\psi\colon V(\mathcal{H}) \to V(\mathcal{G})$ so that $\psi(E) \in \mathcal{G}$ if and only if $E\in \mathcal{H}$,
and we say $\mathcal{H}$ is $\mathcal{G}$-colorable
if there exists a map $\phi\colon V(\mathcal{H}) \to V(\mathcal{G})$ so that $\phi(E)\in \mathcal{G}$ for all $E\in \mathcal{H}$.
\end{definition}

Note that $\mathcal{H}$ is $\mathcal{G}$-colorable if and only if $\mathcal{H}$ occurs as a subgraph in some blowup of $\mathcal{G}$.

The following easy observation relates cancellative $3$-graphs to the Steiner triple systems.

\begin{observation}\label{OBS:blowup-Steiner-is-cancellative}
Suppose that $\mathcal{H}$ is a blowup of a Steiner triple system.
Then $\mathcal{H}$ is cancellative.
Moreover, if $\mathcal{H}$ is a $3$-graph on $n$ vertices that
is a balanced blowup of a Steiner triple system on $k$ vertices,
then $|\partial\mathcal{H}| \sim \frac{k-1}{k}\frac{n^2}{2}$ and
$|\mathcal{H}|\sim \frac{1}{6}\frac{k-1}{k^2}n^3$.
\end{observation}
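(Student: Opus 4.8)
The plan is to treat the two assertions separately. For cancellativity I would use the stated equivalence that a $3$-graph is cancellative if and only if it is $\mathcal{T}_3$-free, and argue by contradiction. Suppose $\mathcal{H}$ is a blowup of a Steiner triple system $\mathcal{S}$ via a map $\psi\colon V(\mathcal{H})\to V(\mathcal{S})$, so that a triple $E$ is an edge of $\mathcal{H}$ exactly when $\psi(E)\in\mathcal{S}$; in particular $\psi$ is injective on every edge of $\mathcal{H}$, since edges of $\mathcal{S}$ are genuine $3$-sets. Assume for contradiction that $\mathcal{H}$ contains three distinct edges $A,B,C$ with $A\triangle B\subset C$. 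Since $A,B$ are distinct $3$-sets, $|A\cap B|\in\{0,1,2\}$ and $|A\triangle B| = 6-2|A\cap B|$, so $|A\cap B|\le 1$ would force $|A\triangle B|\ge 4 > 3 = |C|$, which is impossible; hence $|A\cap B| = 2$. Write $A = \{x,y,a\}$ and $B = \{x,y,b\}$ with $a\ne b$, so that $\{a,b\} = A\triangle B\subseteq C$.

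The crux is then the defining property of $\mathcal{S}$. The edges $\psi(A),\psi(B)\in\mathcal{S}$ both contain the pair $\{\psi(x),\psi(y)\}$, which is a genuine $2$-set because $\psi$ is injective on $A$; since two distinct edges of a Steiner triple system meet in at most one vertex, this forces $\psi(A) = \psi(B)$, and comparing the two descriptions (neither $\psi(a)$ nor $\psi(b)$ equals $\psi(x)$ or $\psi(y)$) gives $\psi(a) = \psi(b)$. But then the third vertex $c$ of $C$ aside, $\psi(C)\supseteq\psi(\{a,b\}) = \{\psi(a)\}$, so $\psi(C)$ has at most two distinct vertices, contradicting $\psi(C)\in\mathcal{S}$. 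This establishes that $\mathcal{H}$ is cancellative. (The same argument works verbatim for a blowup of any linear $3$-graph, and with a slightly longer case analysis for a blowup of any cancellative $3$-graph.)

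For the second assertion I would fix the balanced blowup explicitly: let $\mathcal{S}$ be a Steiner triple system on $k$ vertices realised on parts $V_1,\dots,V_k$ with each $|V_i|\in\{\lfloor n/k\rfloor,\lceil n/k\rceil\}$, and let $\mathcal{H}$ have as edges exactly the triples with one vertex in each of $V_i,V_j,V_l$ whenever $\{i,j,l\}$ is an edge of $\mathcal{S}$. Counting incidences of pairs in $\mathcal{S}$ gives $|\mathcal{S}| = \binom{k}{2}/3 = k(k-1)/6$, and since each edge of $\mathcal{S}$ contributes $|V_i||V_j||V_l| = (1+o(1))(n/k)^3$ edges to $\mathcal{H}$, we obtain $|\mathcal{H}| = (1+o(1))\frac{k(k-1)}{6}(n/k)^3 = (1+o(1))\frac{1}{6}\frac{k-1}{k^2}n^3$. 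For the shadow, a pair inside a single part lies in no edge of $\mathcal{H}$ (every edge meets each part in at most one vertex), whereas for $i\ne j$ the unique edge $\{i,j,l\}$ of $\mathcal{S}$ through $\{i,j\}$ shows, picking any $w\in V_l$ (nonempty once $n\ge k$), that every pair across $V_i$ and $V_j$ lies in $\partial\mathcal{H}$. Hence $\partial\mathcal{H}$ consists exactly of the pairs meeting two distinct parts, so $|\partial\mathcal{H}| = \binom{n}{2} - \sum_{i=1}^{k}\binom{|V_i|}{2} = (1+o(1))\left(\frac{n^2}{2} - \frac{n^2}{2k}\right) = (1+o(1))\frac{k-1}{k}\frac{n^2}{2}$.

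Everything here is elementary, so there is no real obstacle; the only step that uses more than bookkeeping is the collapse of the $|A\cap B| = 2$ case, where it is precisely the Steiner (linearity) property of $\mathcal{S}$ — two distinct edges sharing a pair must coincide — that produces the contradiction. The remaining care is purely in the asymptotics, making sure the $o(1)$ error terms absorb the floors and ceilings in the part sizes and that $V_l$ is nonempty for all large $n$.
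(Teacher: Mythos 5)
Your proof is correct: the collapse of the $|A\cap B|=2$ case via the linearity of the Steiner triple system (two edges sharing a pair coincide, forcing $\psi(a)=\psi(b)$ and hence $|\psi(C)|\le 2$) is exactly the right use of the Steiner property, and the edge/shadow counts for the balanced blowup are accurate. The paper states this as an easy observation without giving a proof, and your argument is precisely the standard one it implicitly relies on, so there is nothing to compare beyond noting that you have filled in the omitted details correctly.
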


In particular, the observation above shows that inequality $(\ref{equ:fesible-region-cancel-3-right})$
is tight up to an additive error term $3n^2$ for all $x$ of the form $\frac{k-1}{k}$, where $k\in 6\mathbb{N} + \{1,3\}$ and $k\ge 3$.
The following result makes the relation between cancellative $3$-graphs and the Steiner triple systems more specific.

For an $r$-graph $\mathcal{H}$ on $n$ vertices define the {\em edge density} of $\mathcal{H}$ as
$d(\mathcal{H})= |\mathcal{H}|/\binom{n}{r}$
and define the {\em shadow density} of $\mathcal{H}$ as $d(\partial\mathcal{H})= |\partial\mathcal{H}|/\binom{n}{r-1}$.

\begin{theorem}\label{THM:mian-cancellative-steiner-stability}
Let $k \in 6\mathbb{N} + \{1,3\}$ and $k\ge 3$.
For every $\delta>0$ there exists $\epsilon>0$ and $n_0$ such that the following holds for all $n\ge n_0$.
Suppose that $\mathcal{H}$ is cancellative $3$-graph on $n$ vertices that satisfies
\begin{align}
\left(d(\partial\mathcal{H}) - \frac{k-1}{k}\right)^2 + \left(d(\mathcal{H}) - \frac{k-1}{k^2}\right)^2  \le \epsilon. \notag
\end{align}
Then $\mathcal{H}$ is $\mathcal{S}$-colorable for some $\mathcal{S}\in {\rm STS}(k)$
after removing at most $\delta n^3$ edges.
\end{theorem}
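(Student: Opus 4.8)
The plan is to show that the two density hypotheses force $\mathcal{H}$ to be within $o(n^3)$ of equality in inequality~$(\ref{equ:fesible-region-cancel-3-right})$, to extract a rigid local structure from this, to deduce that the shadow graph $G:=\partial\mathcal{H}$ is $o(n^2)$-close in edit distance to the balanced complete $k$-partite graph $T_2(n,k)$, and finally to promote the resulting vertex partition to a colouring by a Steiner triple system; here $o(\cdot)$ hides quantities that tend to $0$ as $\epsilon\to 0$ with $n$ large. For the reduction: writing $|\partial\mathcal{H}|=xn^2/2$, the hypothesis gives $x=\frac{k-1}{k}+o(1)$ and $|\mathcal{H}|=\frac{k-1}{6k^2}n^3+o(n^3)$, and since $\frac{x(1-x)}{6}\big|_{x=(k-1)/k}=\frac{k-1}{6k^2}$, inequality~$(\ref{equ:fesible-region-cancel-3-right})$ is then satisfied with slack $o(n^3)$. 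I would revisit the proof of~$(\ref{equ:fesible-region-cancel-3-right})$ in~\cite{LM19A}, whose combinatorial core is that, since $\mathcal{H}$ is cancellative, for every pair $uv\in G$ the set $N(uv):=\{w:uvw\in\mathcal{H}\}$ is \emph{independent} in $G$ (if $uvw,uvw'\in\mathcal{H}$ then $\{w,w'\}=uvw\triangle uvw'$ lies in no edge, so $ww'\notin G$) and is contained in $N_G(u)\cap N_G(v)$; hence $d_{\mathcal{H}}(uv):=|N(uv)|\le n-\deg_G(w)$ for every $w\in N(uv)$. Summing $d_{\mathcal{H}}(uv)+d_{\mathcal{H}}(uw)+d_{\mathcal{H}}(vw)\le 3n-\deg_G(u)-\deg_G(v)-\deg_G(w)$ over all $uvw\in\mathcal{H}$ and combining with convexity yields~$(\ref{equ:fesible-region-cancel-3-right})$, and tracking the slack should show that near-equality forces, after discarding $o(n)$ vertices and $o(n^2)$ pairs: (a) $d_{\mathcal{H}}(uv)=\frac{n}{k}+o(n)$ for all but $o(n^2)$ pairs $uv\in G$; (b) $\deg_G(w)=\frac{k-1}{k}n+o(n)$ for all but $o(n)$ vertices $w$; and (c) $d_{\mathcal{H}}(uv)=n-\deg_G(w)-o(n)$ for all but $o(n^3)$ edges $uvw\in\mathcal{H}$ and each of their pairs.

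Next I would deduce that $G$ is nearly $k$-partite. Fix an edge $uvw$ with $w\in N(uv)$ as in (a)--(c): then $N(uv)\setminus\{w\}$ is independent and lies in the non-neighbourhood $\overline{N}_G(w)$, which by (b) has size $\frac{n}{k}-1-o(n)$, while $|N(uv)|=\frac{n}{k}-o(n)$ by (a); so $P_w:=\overline{N}_G(w)\cup\{w\}$ differs from the independent set $N(uv)$ by only $o(n)$ vertices and is therefore itself ``almost independent'' in $G$, of size $\frac{n}{k}+o(n)$. A standard cleaning argument then shows that ``$P_w$ and $P_{w'}$ agree outside an $o(n)$-set'' is, up to $o(n)$ exceptional vertices, an equivalence relation whose $k$ classes $V_1,\dots,V_k$ all have size $(1+o(1))\frac{n}{k}$, span $o(n^2)$ edges of $G$ internally, and miss $o(n^2)$ pairs between them; that is, $G$ becomes a subgraph of $T_2(n,k)$ after deleting $o(n^2)$ edges, and it cannot use fewer than $k$ classes since $|G|=\frac{k-1}{2k}n^2+o(n^2)$.

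It remains to recover the Steiner triple system. Fix two classes $V_i,V_j$. For all but $o(n^2)$ pairs $uv$ with $u\in V_i$, $v\in V_j$, the set $N(uv)$ is a near-maximum independent set of $G$, hence agrees up to $o(n)$ vertices with $P_w$ for any $w\in N(uv)$, i.e.\ with a single class $V_{\ell}$; moreover $\ell\notin\{i,j\}$ because $N(uv)\subseteq N_G(u)\cap N_G(v)$ meets $V_i\cup V_j$ in only $o(n)$ vertices. So there is a function $(i,j)\mapsto\ell(i,j)\notin\{i,j\}$ such that all but $o(n^3)$ edges of $\mathcal{H}$ lie in three classes $V_i,V_j,V_{\ell(i,j)}$. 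Following a typical edge $uvw$ with $u\in V_i$, $v\in V_j$, $w\in V_{\ell(i,j)}$ and reading off $N(uw)$ gives $\ell(i,\ell(i,j))=j$, and symmetrically; hence $\mathcal{S}:=\{\{i,j,\ell(i,j)\}:1\le i<j\le k\}$ is a partial triple system covering every pair, i.e.\ $\mathcal{S}\in{\rm STS}(k)$. Colouring each vertex by the index of the class containing it and deleting the $o(n^3)<\delta n^3$ edges of $\mathcal{H}$ that are not rainbow across a triple of $\mathcal{S}$ exhibits $\mathcal{H}$ as $\mathcal{S}$-colourable; when $k=3$ this reproves Theorem~\ref{THM:KM-stability-cancel-3}, and Observation~\ref{OBS:blowup-Steiner-is-cancellative} shows the conclusion is essentially best possible.

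The main obstacle is the first step: converting the single global inequality $|\mathcal{H}|\le\frac{x(1-x)}{6}n^3+3n^2$, which is only \emph{asymptotically} tight at $x=\frac{k-1}{k}$, into the quantitative local statements (a)--(c) with error terms strong enough to survive the cleaning that produces the partition — in other words, proving a genuine stability version of inequality~$(\ref{equ:fesible-region-cancel-3-right})$ of~\cite{LM19A} with an explicit (ideally polynomial) dependence of $\epsilon$ on $\delta$. By comparison, the ``closure'' argument that upgrades the vertex partition to a Steiner triple system is soft.
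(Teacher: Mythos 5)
Your plan stands or falls on the first step, and that step is not actually proved: you need a genuine stability version of inequality $(\ref{equ:fesible-region-cancel-3-right})$, i.e.\ that near-equality forces the local statements (a)--(c), and you only assert that ``tracking the slack should show'' this. That is precisely the technical heart of the theorem, as you yourself concede, so as written the argument has a real gap rather than a routine omission. Worse, the mechanism you sketch for $(\ref{equ:fesible-region-cancel-3-right})$ does not obviously close: summing $d_{\mathcal{H}}(uv)+d_{\mathcal{H}}(uw)+d_{\mathcal{H}}(vw)\le 3n-\deg_G(u)-\deg_G(v)-\deg_G(w)$ over edges gives $\sum_{P\in\partial\mathcal{H}}d_{\mathcal{H}}(P)^2\le 3n|\mathcal{H}|-\sum_{v}d_{\mathcal{H}}(v)\deg_G(v)$, and convexity alone cannot control the correlation term $\sum_{v}d_{\mathcal{H}}(v)\deg_G(v)$ from below (the $\mathcal{H}$-degrees and $G$-degrees need not be positively correlated a priori), so ``near-equality in this chain'' is not even a well-founded starting point for extracting (a)--(c). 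There are also smaller unproved assertions downstream that inherit this dependence: that approximate agreement of the sets $P_w$ is an approximate equivalence relation with exactly $k$ classes, and that the index $\ell(i,j)$ is well defined (independent of the choice of typical pair $u\in V_i$, $v\in V_j$) — both are believable but need quantitative arguments whose error terms come from (a)--(c).

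For comparison, the paper avoids proving any stability form of $(\ref{equ:fesible-region-cancel-3-right})$. It uses the pair version (Theorem~\ref{thm-calcel-3-induction-shadow}) purely as a black box on induced substructures: via clique expansions of $\partial\mathcal{H}$ and Lemma~\ref{LEMMA:cancellative-clique-expansion} it shows that any collection of cliques of size at least $k+1$ in $\partial\mathcal{H}$ covering $\Omega(\epsilon n)$ vertices would force $|\mathcal{H}|$ below the hypothesis, so after deleting $O(\epsilon n)$ vertices and the vertices of small shadow-degree the shadow is $K_{k+1}$-free with minimum degree above $\frac{3k-4}{3k-1}$, and Andr\'asfai--Erd\H{o}s--S\'os (Theorem~\ref{THM:AES74}) yields the $k$-partition directly (Lemma~\ref{LEMMA:cancellative-3-graph-stability-decompose-vertex-set}). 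Only then, inside the $k$-partite setting, does it run the codegree/link analysis (Lemma~\ref{LEMMA:k-partite-cancellative-3-graph-stability}), which is where your third paragraph roughly matches the paper. If you want to salvage your route, the cleanest fix is to replace your step 1 by this counting-with-clique-expansions argument (or some other proved substitute); otherwise you must actually prove the stability version of $(\ref{equ:fesible-region-cancel-3-right})$ with explicit error terms, which is a substantial piece of work not contained in your proposal.
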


{\bf Remarks.}
\begin{itemize}
\item
Roughly speaking, Theorem~\ref{THM:mian-cancellative-steiner-stability} says if the shadow density and the edge density
of a cancellative $3$-graph $\mathcal{H}$ are close (in the sense of Euclidean distance in $\mathbb{R}^2$)
to $\frac{k-1}{k}$ and $\frac{k-1}{k^2}$ respectively,
then the structure of $\mathcal{H}$ is close to the balanced blowup of a Steiner triple system on $k$ vertices.
\item
Theorem~\ref{THM:mian-cancellative-steiner-stability} contains Keevash and Mubayi's result (Theorem~\ref{THM:KM-stability-cancel-3})
as a special case (i.e. $k=3$) because by inequalities $(\ref{equ:fesible-region-cancel-3-left})$ and $(\ref{equ:fesible-region-cancel-3-right})$
(also see Figure~\ref{fig:feasible-region-cancellative-3}),
if $d(\mathcal{H})$ is close to $2/9$, then $\partial\mathcal{H}$ must be close to $2/3$.
So by Theorem~\ref{THM:mian-cancellative-steiner-stability}, $\mathcal{H}$ is structurally close to the balanced blowup of $K_{3}^{3}$,
which is $T_{3}(n,3)$.
\item
Our proof shows that the relation $\delta = O(\epsilon^{1/2})$ is sufficient for Theorem~\ref{THM:mian-cancellative-steiner-stability}.
\end{itemize}

A more detailed analysis of the proof of Theorem~\ref{THM:mian-cancellative-steiner-stability}
yields the following exact result.

\begin{theorem}\label{thm-cancellative-exact}
Let $k \in 6\mathbb{N} + \{1,3\}$, $k\ge 3$, and $n$ be a sufficiently large integer.
Suppose that $\mathcal{H}$ is a cancellative $3$-graph on $n$ vertices with $|\partial\mathcal{H}| = t_{2}(n,k)$.
Then $|\mathcal{H}|\le s(n,k)$, where
\begin{align}
s(n,k) = \max\left\{|\mathcal{G}|\colon \text{$\mathcal{G}$ is a blowup of $\mathcal{S}$ for
some $\mathcal{S}\in {\rm STS}(k)$ and $|V(\mathcal{G})| = n$}\right\}. \notag
\end{align}
Moreover, equality holds only if $\mathcal{H}$ is a blowup of $\mathcal{S}$ for some $\mathcal{S}\in {\rm STS}(k)$.
\end{theorem}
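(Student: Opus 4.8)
The plan is to take $\mathcal{H}$ to be \emph{extremal} --- a cancellative $3$-graph on $n$ vertices with $|\partial\mathcal{H}|=t_2(n,k)$ and $|\mathcal{H}|$ maximum --- and to pin down its structure exactly; proving that every such $\mathcal{H}$ is a blowup of some $\mathcal{S}\in{\rm STS}(k)$ yields both the bound $|\mathcal{H}|\le s(n,k)$ and the equality characterisation. Note first that the balanced blowup of any fixed $k$-vertex Steiner triple system is a legal competitor: by Observation~\ref{OBS:blowup-Steiner-is-cancellative} it is cancellative, its shadow is exactly the set of cross-pairs and so has size $t_2(n,k)$, and it has $\tfrac{k-1}{6k^2}n^3-O_k(n^2)$ edges. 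Hence $|\mathcal{H}|\ge\tfrac{k-1}{6k^2}n^3-O_k(n^2)$, so $d(\mathcal{H})\to\tfrac{k-1}{k^2}$ while $d(\partial\mathcal{H})=t_2(n,k)/\binom n2\to\tfrac{k-1}{k}$, and Theorem~\ref{THM:mian-cancellative-steiner-stability} applies: after deleting at most $\gamma n^3$ edges (any fixed $\gamma>0$, $n$ large) $\mathcal{H}$ is $\mathcal{S}$-colorable for some $\mathcal{S}\in{\rm STS}(k)$. Fix such $\mathcal{S}$ and a colouring $\phi\colon V(\mathcal{H})\to V(\mathcal{S})$ with colour classes $U_1,\dots,U_k$, chosen to minimise the number of \emph{bad} edges (edges $E$ with $\phi(E)\notin\mathcal{S}$).

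The next block of work is to convert the hypotheses into rigid structure. Cancellativity is equivalent to: for every $P\in\partial\mathcal{H}$, the link $L(P)=\{v:P\cup\{v\}\in\mathcal{H}\}$ is an independent set of the graph $\partial\mathcal{H}$ (so in particular each vertex link $\mathcal{H}_v$ is triangle-free). Using that $|\mathcal{H}|$ is near-extremal together with this, one refines the stability to the statement that $\partial\mathcal{H}$ is $o(n^2)$-close to a complete $k$-partite graph on $U_1,\dots,U_k$ --- equivalently, that the classes are near-balanced, almost all cross-pairs are covered, and only $o(n^2)$ monochromatic pairs are covered. (This refinement does not follow from the removable-$\gamma n^3$-edges form of stability by itself; it is the first technical point.) Granting it, the stability form of Tur\'an's theorem gives $\alpha(\partial\mathcal{H})\le(1+o(1))n/k$, with every independent set of size $\ge(1-o(1))n/k$ contained in one class up to $o(n)$ vertices. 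Now $3|\mathcal{H}|=\sum_{P\in\partial\mathcal{H}}|L(P)|\le|\partial\mathcal{H}|\cdot\alpha(\partial\mathcal{H})$ together with $|\mathcal{H}|\ge\tfrac{k-1}{6k^2}n^3-o(n^3)$ forces these two sides to agree up to $o(n^3)$; hence $|L(P)|=(1\pm o(1))n/k$ for all but $o(n^2)$ pairs $P\in\partial\mathcal{H}$, and each such $L(P)$ lies (up to $o(n)$ vertices) in one class $U_{c(P)}$. Feeding the type map $c(\cdot)$ back into the triangle-free links and the independence of the $L(P)$, cancellativity forbids the configuration of three ``dense'' used triples $\{a,b,c\},\{a,b,c'\},\{c,c',e\}$; this forces, for each pair of classes $\{a,b\}$, essentially all edges meeting $U_a$ and $U_b$ to have their third vertex in a single class, and the family $\mathcal{S}'$ of triples of $[k]$ used by $\mathcal{H}$ to be approximately a Steiner triple system.

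Once these approximate statements are upgraded to exact ones --- the extremal $\mathcal{H}$ has \emph{no} bad edges and $\mathcal{S}'$ is an honest Steiner triple system on $[k]$ --- the rest is immediate. Then $\partial\mathcal{H}$ contains only cross-pairs, so $t_2(n,k)=|\partial\mathcal{H}|\le\binom n2-\sum_i\binom{|U_i|}2\le t_2(n,k)$, which forces the classes to be exactly balanced and every cross-pair to be covered. Uniqueness of the triple of $\mathcal{S}'$ through each pair of classes gives $L(P)\subseteq U_c$ for every cross-pair $P\in U_a\times U_b$ with $\{a,b,c\}\in\mathcal{S}'$, so
\[
3|\mathcal{H}|=\sum_{P\in\partial\mathcal{H}}|L(P)|\le\sum_{\{a,b\}\subseteq[k]}|U_a||U_b||U_c|=3\sum_{E\in\mathcal{S}'}\prod_{i\in E}|U_i|,
\]
whence $|\mathcal{H}|\le\sum_{E\in\mathcal{S}'}\prod_{i\in E}|U_i|\le s(n,k)$, with equality only if $L(P)=U_c$ for every cross-pair, i.e.\ only if $\mathcal{H}$ is the full blowup of $\mathcal{S}'\in{\rm STS}(k)$. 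This gives the bound for the extremal, hence for every, cancellative $3$-graph with $|\partial\mathcal{H}|=t_2(n,k)$, and identifies the equality case as a blowup of a Steiner triple system on $k$ vertices.

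The main obstacle is the passage from approximate to exact structure under the rigid constraint $|\partial\mathcal{H}|=t_2(n,k)$: moving one vertex to a better class, or tolerating a tiny imbalance, changes $|\partial\mathcal{H}|$, so the symmetrisation/cloning moves that ordinarily remove the vertices incident to many bad edges must be made shadow-neutral. I expect this to require a bootstrapping loop --- stability plus the shadow constraint bound the bad edges and the imbalance; a first round of surgery drives the bad edges down to $O(n^2)$ and the imbalance to $o(n)$; a second, now genuinely shadow-neutral, round drives the bad edges to zero and the partition to exact balance --- whose termination rests on the counting inequality that the number of bad edges incident to a vertex is at most the number of conforming edges it misses (a consequence of cancellativity and $\alpha(\partial\mathcal{H})=(1+o(1))n/k$). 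Establishing global (not merely local) consistency of the type map, and checking that filling in a conforming edge never creates a cancellative violation --- the relevant symmetric differences are monochromatic, hence uncovered, and linearity of $\mathcal{S}'$ prevents the new edge from being a containing set --- are the remaining technical tasks.
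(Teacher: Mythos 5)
Your outline gets the easy ends of the argument right (the balanced blowup as a competitor, the stability theorem, and the final count $3|\mathcal{H}|=\sum_{P}|L(P)|\le 3\sum_{E\in\mathcal{S}'}\prod_{i\in E}|U_i|$ once the structure is exact), but the core of the theorem --- the passage from approximate to exact structure --- is exactly the part you do not prove. You flag it yourself: the refinement of the $\gamma n^3$-edge-removal form of stability to ``$\partial\mathcal{H}$ is $o(n^2)$-close to complete $k$-partite'' is asserted as ``the first technical point'' without an argument (the paper's Lemma~\ref{LEMMA:cancellative-3-graph-stability-decompose-vertex-set} could be quoted here, but you do not), and the decisive step --- showing the extremal $\mathcal{H}$ has \emph{no} bad edges and exactly balanced classes under the rigid constraint $|\partial\mathcal{H}|=t_2(n,k)$ --- is replaced by a proposed ``bootstrapping loop'' whose engine is an unproved counting inequality (``bad edges at a vertex are at most the conforming edges it misses'') and whose surgery moves are not verified to be shadow-neutral or cancellativity-preserving. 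In particular, ``filling in a conforming edge'' can enlarge the shadow beyond $t_2(n,k)$ unless one already knows every cross-pair is covered, which is part of what has to be proved; as written this is a plan with an acknowledged gap, not a proof.

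The paper takes a genuinely different route that avoids this surgery altogether: it proves that $\partial\mathcal{H}$ is literally $K_{k+1}$-free. This is done by running the maximal clique-expansion machinery with threshold $k$ and, crucially, improving the additive error $3m^2$ in Theorem~\ref{thm-calcel-3-induction-shadow} down to $O(n)$ by a backward induction along the expansion (Lemma~\ref{lemma-cancel-3-exact-induction} and Claim~\ref{claim-cancel-3-Hi-up-bound}), after first bounding $W_{t'}=O_k(1)$ and $|R|=O_k(n^{1/2})$; with the sharpened error term, any $K_{k+1}$ in $\partial\mathcal{H}$ forces $|\mathcal{H}|<\frac{k-1}{6k^2}n^3\le s(n,k)$, a contradiction (Claim~\ref{claim-cancel-3-G-Kk+1-free}). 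Then Tur\'an's theorem applied to the hypothesis $|\partial\mathcal{H}|=t_2(n,k)$ gives $\partial\mathcal{H}\cong T_2(n,k)$ \emph{exactly}, and the blowup structure of $\mathcal{H}$ follows by rerunning the link analysis of Lemma~\ref{LEMMA:k-partite-cancellative-3-graph-stability} (from Claim~\ref{claim-cancel-3-L(v)-disjoint-bipartite} on) in this exact setting. So the exactness is extracted from Tur\'an's equality case rather than from an iterative cleanup of an approximately colorable extremal example; to salvage your approach you would have to actually construct and analyze the shadow-neutral exchange argument you sketch, which is precisely the missing content.
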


%%%%%%%%%%%%%%%%%%%%%%%%%%%%%%%%%%%%%%%%%%%%%
\subsection{Feasible region}\label{SUBSEC:intro-feasible-region}
The classical Kruskal-Katona theorem \cite{KA66,KR63} gives a tight upper bound for $|\mathcal{H}|$ as a function of $|\partial \mathcal{H}|$.
In \cite{LM19A}, the hypergraph Tur\'{a}n problem and the Kruskal-Katona theorem were combined and studied systemically.

The {\em feasible region} $\Omega(\mathcal{F})$ of a family $\mathcal{F}$ of $r$-graphs
is the set of points $(x,y)\in [0,1]^2$ such that there exists a sequence of $\mathcal{F}$-free $r$-graphs
$\left( \mathcal{H}_{k}\right)_{k=1}^{\infty}$ with $\lim_{k \to \infty}v(\mathcal{H}_{k}) = \infty$,
$\lim_{k \to \infty}d(\partial\mathcal{H}_{k}) = x$ and $\lim_{k \to \infty}d(\mathcal{H}_{k}) = y$.
Several general results about the shape of $\Omega(\mathcal{F})$ were proved in \cite{LM19A,LM19,LMR1}.
In particular, $\Omega(\mathcal{F})$ is completely determined by a left-continuous almost everywhere differentiable function
$g(\mathcal{F})\colon {\rm proj}\Omega(\mathcal{F}) \to [0,1]$, which we called the {\em feasible region function} of $\mathcal{F}$,
where
\begin{align}
{\rm proj}\Omega(\mathcal{F}) = \left\{ x : \text{$\exists y \in [0,1]$ such that $(x,y) \in \Omega(\mathcal{F})$} \right\}, \notag
\end{align}
and
\begin{align}
g({\mathcal{F}},x) = \max\left\{y: (x,y) \in \Omega(\mathcal{F}) \right\}, \text{ for all } x \in {\rm proj}\Omega(\mathcal{F}) \notag
\end{align}
(here we abuse notation by writing $g({\mathcal{F}})(x)$ as $g({\mathcal{F}},x)$).

In particular, for $\mathcal{T}_{3}$
inequalities $(\ref{equ:fesible-region-cancel-3-left})$ and $(\ref{equ:fesible-region-cancel-3-right})$ imply that
\begin{align}
g(\mathcal{T}_{3},x)\le \min\{x^{3/2}/\sqrt{6},x(1-x)\} \quad{\rm for}\quad x\in[0,1]. \notag
\end{align}
Moreover, the inequality is tight for all $x\in [0,1/2]$ and all $x\in \left\{\frac{k-1}{k}\colon k\in 6\mathbb{N}+\{1,3\}\right\}$
(note that  when $x= \frac{k-1}{k}$ the equality are achieved by the balanced blowup of Steiner triple systems on $k$ vertices).

It was proved in \cite{LM19,LMR1} that the function $g(\mathcal{F})$ can have an arbitrary (finite) number of global maxima in general.
However, it is not clear that whether the function $g(\mathcal{F})$ can have a local maximum.
In particular, the following question was posed in \cite{LM19A}.

\begin{problem}[\cite{LM19A}]\label{prob-steiner-points-maximum}
For every $k \in 6\mathbb{N}+\{1,3\}$ with $k\ge 7$,
is the point $((k-1)/k,(k-1)/k^2)$ a local maximum of $g(\mathcal{T}_{3})$?
\end{problem}

\begin{figure}[htbp]
\centering
\begin{tikzpicture}[xscale=5,yscale=5]
\draw [->] (0,0)--(1.1,0);
\draw [->] (0,0)--(0,0.6);
\draw (0,0.5)--(1,0.5);
\draw (1,0)--(1,0.5);
\draw [line width=1pt,dash pattern=on 1pt off 1.2pt,domain=0:2/3] plot(\x,{2/9});
\draw [line width=1pt,dash pattern=on 1pt off 1.2pt] (2/3,0) -- (2/3,2/9);
\draw [line width=1pt,dash pattern=on 1pt off 1.2pt] (6/7,0) -- (6/7,6/49);
\draw [line width=1pt,dash pattern=on 1pt off 1.2pt] (8/9,0) -- (8/9,8/81);
\draw [line width=1pt,dash pattern=on 1pt off 1.2pt] (0,6/49) -- (6/7,6/49);
\draw [line width=1pt,dash pattern=on 1pt off 1.2pt] (0,8/81) -- (8/9,8/81);
\draw[line width=1pt,color=sqsqsq,fill=sqsqsq,fill opacity=0.25]
(2/3,2/9)--(0.6630236470626231,0.2204032031146704)
--(0.6585443512860356,0.21817345951646344)--(0.654065055509448,0.21595128623168966)
--(0.6495857597328606,0.21373670913866868)--(0.645106463956273,0.21152975438293659)
--(0.6406271681796856,0.20933044838187664)--(0.636147872403098,0.2071388178294629)
--(0.6316685766265105,0.20495488970112097)--(0.6271892808499231,0.2027786912587088)
--(0.6227099850733355,0.2006102500556217)--(0.6182306892967481,0.19844959394202646)
--(0.6137513935201605,0.19629675107022665)--(0.609272097743573,0.19415174990016584)
--(0.6047928019669855,0.19201461920507196)--(0.600313506190398,0.1898853880772468)
--(0.5958342104138105,0.1877640859340081)--(0.591354914637223,0.1856507425237863)
--(0.5868756188606356,0.18354538793238254)--(0.582396323084048,0.18144805258939525)
--(0.5779170273074605,0.17935876727481742)--(0.573437731530873,0.1772775631258134)
--(0.5689584357542855,0.17520447164368083)--(0.5644791399776979,0.17313952470100336)
--(0.5599998442011105,0.17108275454900254)--(0.5555205484245229,0.16903419382509416)
--(0.5510412526479355,0.1669938755606583)--(0.5465619568713479,0.164961833189029)
--(0.5420826610947604,0.16293810055371402)--(0.537603365318173,0.16092271191685042)
--(0.5331240695415854,0.15891570196790752)--(0.528644773764998,0.1569171058326439)
--(0.5241654779884104,0.1549269590823307)--(0.519686182211823,0.1529452977432499)
--(0.5152068864352354,0.15097215830647748)--(0.5107275906586479,0.14900757773796589)
--(0.5062482948820604,0.14705159348893285)--(0.5017689991054729,0.14510424350657278)
--(0.4972897033288854,0.1431655662451021)--(0.4928104075522979,0.14123560067715077)
--(0.48833111177571037,0.13931438630551737)--(0.48385181599912286,0.1374019631752988)
--(0.4793725202225354,0.13549837188641337)--(0.4748932244459478,0.1336036536065323)
--(0.4704139286693603,0.1317178500844364)--(0.46593463289277287,0.12984100366381837)
--(0.46145533711618536,0.1279731572975465)--(0.45697604133959785,0.12611435456241368)
--(0.45249674556301034,0.12426463967439019)--(0.44801744978642283,0.12242405750440374)
--(0.4435381540098353,0.12059265359467064)--(0.43905885823324786,0.11877047417560284)
--(0.43457956245666024,0.11695756618331694)--(0.43010026668007284,0.11515397727777338)
--(0.4256209709034853,0.1133597558615753)--(0.4211416751268978,0.11157495109945828)
--(0.4166623793503103,0.1097996129385029)--(0.4121830835737228,0.10803379212910673)
--(0.4077037877971353,0.10627754024675189)--(0.4032244920205478,0.10453090971460692)
--(0.39874519624396026,0.10279395382700543)--(0.3942659004673728,0.10106672677384512)
--(0.3897866046907853,0.09934928366595529)--(0.38530730891419773,0.09764168056148004)
--(0.3808280131376103,0.09594397449333411)--(0.3763487173610227,0.09425622349778398)
--(0.37186942158443526,0.0925784866442176)--(0.36739012580784774,0.0909108240661645)
--(0.36291083003126023,0.0892532969936367)--(0.3584315342546727,0.08760596778686161)
--(0.35395223847808527,0.08596889997148711)--(0.34947294270149776,0.0843421582753402)
--(0.3449936469249102,0.08272580866683046)--(0.34051435114832274,0.08111991839509318)
--(0.33603505537173517,0.07952455603197399)--(0.3315557595951477,0.07793979151596783)
--(0.3270764638185602,0.07636569619822708)--(0.3225971680419727,0.0748023428907691)
--(0.3181178722653852,0.07324980591701827)--(0.31363857648879767,0.07170816116483066)
--(0.3091592807122102,0.07017748614216043)--(0.30467998493562265,0.06865786003553928)
--(0.3002006891590352,0.06714936377155435)--(0.2957213933824477,0.0656520800815259)
--(0.2912420976058602,0.06416609356960143)--(0.28676280182927266,0.06269149078450184)
--(0.28228350605268515,0.06122836029517568)--(0.27780421027609764,0.05977679277063996)
--(0.27332491449951013,0.058336881064308725)--(0.2688456187229227,0.05690872030314095)
--(0.26436632294633516,0.05549240798196618)--(0.25988702716974765,0.05408804406338191)
--(0.25540773139316014,0.05269573108365302)--(0.25092843561657263,0.05131557426508493)
--(0.24644913983998512,0.04994768163538859)--(0.2419698440633976,0.04859216415460634)
--(0.23749054828681013,0.04724913585022642)--(0.23301125251022262,0.045918713961177304)
--(0.2285319567336351,0.04460101909146803)--(0.2240526609570476,0.04329617537431961)
--(0.21957336518046008,0.042004310647729165)--(0.21509406940387255,0.04072555664250949)
--(0.2106147736272851,0.03946004918396895)--(0.20613547785069758,0.03820792840853079)
--(0.20165618207411007,0.03696933899674451)--(0.19717688629752256,0.035744430424320926)
--(0.19269759052093502,0.03453335723302326)--(0.18821829474434756,0.03333627932348077)
--(0.18373899896776005,0.03215336227225981)--(0.17925970319117254,0.03098477767583987)
--(0.17478040741458506,0.02983070352450204)--(0.17030111163799752,0.028691324609560667)
--(0.16582181586141004,0.027566832967862585)--(0.16134252008482253,0.02645742836805515)
--(0.15686322430823502,0.025363318843811742)--(0.15238392853164753,0.024284721280009886)
--(0.14790463275506,0.023221862058822252)--(0.1434253369784725,0.0221749777738319)
--(0.138946041201885,0.02114431602166764)--(0.1344667454252975,0.020130136282327293)
--(0.12998744964871,0.019132710901390154)--(0.1255081538721225,0.01815232618980085)
--(0.12102885809553499,0.017189283659967112)--(0.11654956231894749,0.01624390142069605)
--(0.11207026654235998,0.015316515758221273)--(0.10759097076577247,0.014407482936513701)
--(0.10311167498918497,0.013517181257608283)--(0.09863237921259746,0.01264601343232816)
--(0.09415308343600996,0.01179440932425909)--(0.08967378765942245,0.010962829146125673)
--(0.08519449188283494,0.01015176720926047)--(0.08071519610624744,0.009361756355687817)
--(0.07623590032965993,0.008593373241477962)--(0.07175660455307244,0.007847244693951734)
--(0.06727730877648493,0.00712405544089494)--(0.06279801299989743,0.006424557617874988)
--(0.05831871722330992,0.0057495826171624865)--(0.053839421446722414,0.005100056076844675)
--(0.04936012567013491,0.0044770171693905856)--(0.04488082989354741,0.003881643919684239)
--(0.040401534116959896,0.0033152872187856555)--(0.03592223834037239,0.0027795178016977455)
--(0.03144294256378489,0.0022761933405199843)--(0.026963646787197385,0.0018075583218622703)
--(0.02248435101060988,0.0013764007788228095)--(0.018005055234022373,0.0009863159625249872)
--(0.01352575945743487,0.000642194917737317)--(0.009046463680847366,0.00035127127796605074)
--(0.00456716790425986,0.00012600704929554806)
--(0,0)--(1,0)
--(0.99,0.0099)--(0.98,0.0196)--(0.97,0.0291)--(0.96,0.0384)--(0.95,0.0475)
--(0.94,0.0564)--(0.93,0.0651)--(0.92,0.0736)--(0.91,0.0819)--(0.9,0.09)--(0.89,0.0979)
--(0.88,0.1056)--(0.87,0.1131)--(0.86,0.1204)--(0.85,0.1275)--(0.84,0.1344)--(0.83,0.1411)
--(0.82,0.1476)--(0.81,0.1539)--(0.8,0.16)--(0.79,0.1659)--(0.78,0.1716)--(0.77,0.1771)
--(0.76,0.1824)--(0.75,0.1875)--(0.74,0.1924)--(0.73,0.1971)--(0.72,0.2016)--(0.71,0.2059)
--(0.7,0.21)--(0.69,0.2139)--(0.68,0.2176)--(0.67,0.2211);
\begin{scriptsize}
\draw [fill=uuuuuu] (2/3,2/9) circle (0.2pt);
\draw [fill=uuuuuu] (6/7,6/49) circle (0.2pt);
\draw [fill=uuuuuu] (8/9,8/81) circle (0.2pt);
%\draw[color=uuuuuu] (2/3+0.12,2/9+0.07) node {$\left(\frac{2}{3},\frac{2}{9}\right)$};
\draw [fill=uuuuuu] (2/3,0) circle (0.2pt);
\draw[color=uuuuuu] (2/3,0-0.07) node {$\frac{2}{3}$};
\draw [fill=uuuuuu] (6/7,0) circle (0.2pt);
\draw[color=uuuuuu] (6/7-0.01,0-0.07) node {$\frac{6}{7}$};
\draw [fill=uuuuuu] (8/9,0) circle (0.2pt);
\draw[color=uuuuuu] (8/9+0.01,0-0.07) node {$\frac{8}{9}$};
\draw [fill=uuuuuu] (1,0) circle (0.2pt);
\draw[color=uuuuuu] (1,0-0.07) node {$1$};
\draw [fill=uuuuuu] (0,0) circle (0.2pt);
\draw[color=uuuuuu] (0-0.05,0-0.05) node {$0$};
\draw [fill=uuuuuu] (0,2/9) circle (0.2pt);
\draw[color=uuuuuu] (0-0.08,2/9) node {$2/9$};
\draw [fill=uuuuuu] (0,6/49) circle (0.2pt);
\draw[color=uuuuuu] (0-0.08,6/49+0.02) node {$6/49$};
\draw [fill=uuuuuu] (0,8/81) circle (0.2pt);
\draw[color=uuuuuu] (0-0.08,8/81-0.02) node {$9/81$};
\draw [fill=uuuuuu] (0,1/2) circle (0.2pt);
\draw[color=uuuuuu] (0-0.08,1/2) node {$1/2$};
\draw[color=uuuuuu] (1+0.1,0-0.07) node {$x$};
\draw[color=uuuuuu] (0-0.08,1/2+0.1) node {$y$};
\end{scriptsize}
\end{tikzpicture}
\caption{$g(\mathcal{T}_{3},x)\le \min\{x^{3/2}/\sqrt{6},x(1-x)\}$ for $x\in[0,1]$.}
\label{fig:feasible-region-cancellative-3}
\end{figure}
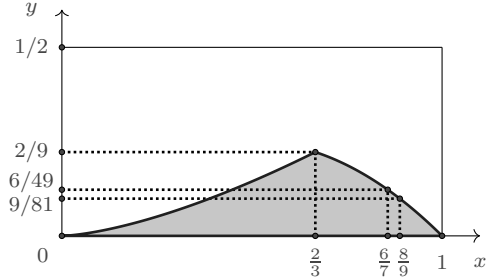

As an application of Theorem~\ref{THM:mian-cancellative-steiner-stability}
we give an affirmative answer to Problem~\ref{prob-steiner-points-maximum},
and this is the first example showing that the feasible region function can have a local maximum.

\begin{theorem}\label{THM:local-max-cancelltive-3-fesible-region}
Let $k \in 6\mathbb{N} + \{1,3\}$ and $k\ge 3$ be fixed.
Then there exists an absolute constant $c>0$ such that for
every constant $\epsilon \le c$ there exists another constant $\delta = \delta(\epsilon)>0$
so that
\begin{align}
g(\mathcal{T}_{3},(k-1)/k-\epsilon) \le \frac{k-1}{k^2} - \delta
\quad{\rm and}\quad
g(\mathcal{T}_{3},(k-1)/k+\epsilon) \le \frac{k-1}{k^2} - \delta. \notag
\end{align}
In particular, the point $((k-1)/k,(k-1)/k^2)$ is a local maximum of $g(\mathcal{T}_{3})$.
\end{theorem}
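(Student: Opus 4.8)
\textbf{The right-hand inequality} is the easy half: it needs only inequality~$(\ref{equ:fesible-region-cancel-3-right})$, which upon passing to the limit gives $g(\mathcal T_3,x)\le x(1-x)$ for every $x\in[0,1]$ (the additive $3n^2$ disappears). Since $k\ge3$ we have $(k-1)/k\ge 2/3>1/2$, so the parabola $x(1-x)$ is strictly decreasing on $[(k-1)/k,1]$ and equals $(k-1)/k^2$ at $x=(k-1)/k$. Hence for $x=(k-1)/k+\epsilon$ with $0<\epsilon<1/k$ we get $g(\mathcal T_3,(k-1)/k+\epsilon)\le(\tfrac{k-1}{k}+\epsilon)(\tfrac1k-\epsilon)=\tfrac{k-1}{k^2}-\epsilon(\tfrac{k-2}{k}+\epsilon)=\tfrac{k-1}{k^2}-\delta$ with $\delta>0$. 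When $k=3$ the \emph{left-hand} inequality is equally elementary: there $x^{3/2}/\sqrt6$ is strictly increasing and equals $2/9$ at $x=2/3$, so $(\ref{equ:fesible-region-cancel-3-left})$ already yields $g(\mathcal T_3,2/3-\epsilon)\le(2/3-\epsilon)^{3/2}/\sqrt6<2/9$. So the only substantive case is the left-hand inequality for $k\ge7$, where at $x=(k-1)/k$ both earlier bounds are too weak ($(\ref{equ:fesible-region-cancel-3-left})$ gives $\sqrt{k(k-1)/6}\cdot\tfrac{k-1}{k^2}>\tfrac{k-1}{k^2}$, and $(\ref{equ:fesible-region-cancel-3-right})$ gives $x(1-x)>\tfrac{k-1}{k^2}$ just below $(k-1)/k$).

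\textbf{The left-hand inequality, $k\ge7$.} I would argue by contradiction. If no positive $\delta$ works, then there is a sequence of cancellative $3$-graphs $\mathcal H_n$ with $v(\mathcal H_n)\to\infty$, $d(\partial\mathcal H_n)\to(k-1)/k-\epsilon$ and $d(\mathcal H_n)\to y^{*}\ge(k-1)/k^2$. By $(\ref{equ:fesible-region-cancel-3-right})$ also $y^{*}\le\tfrac{k-1}{k^2}+O(\epsilon)$, so $(d(\partial\mathcal H_n),d(\mathcal H_n))$ lies within $O(\epsilon)$ of $((k-1)/k,(k-1)/k^2)$; choosing the absolute constant $c$ small enough that this falls inside the hypothesis of Theorem~\ref{THM:mian-cancellative-steiner-stability}, there is, for all large $n$, some $\mathcal S_n\in{\rm STS}(k)$ and a partition $V_1\cup\cdots\cup V_k$ of $V(\mathcal H_n)$ (write $|V_i|=\alpha_i n$) such that all but at most $\delta' n^3$ edges of $\mathcal H_n$ have one vertex in each of three parts indexed by an edge of $\mathcal S_n$; call these the \emph{structured} edges. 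Because $\mathcal S_n$ is a Steiner triple system, \emph{every} pair $\{i,j\}\subseteq[k]$ lies in exactly one of its edges and each vertex lies in $(k-1)/2$ of them, so writing $\alpha_i=\tfrac1k+\beta_i$ with $\sum_i\beta_i=0$ the linear terms in the expansion vanish and one obtains the identity
\begin{align}
\sum_{\{i,j,l\}\in\mathcal S_n}\alpha_i\alpha_j\alpha_l=\frac{k-1}{6k^2}-\frac{1}{2k}\sum_{i\in[k]}\beta_i^2+\sum_{\{i,j,l\}\in\mathcal S_n}\beta_i\beta_j\beta_l, \notag
\end{align}
whose cubic term is at most $\tfrac k2(\sum_i\beta_i^2)^{3/2}$.

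\textbf{The extremal estimate.} On the shadow side the structured edges lie inside the complete $k$-partite graph on $V_1,\dots,V_k$, which has $\tfrac{n^2}{2}(\tfrac{k-1}{k}-\sum_i\beta_i^2)$ edges; comparing with $|\partial\mathcal H_n|\sim(\tfrac{k-1}{k}-\epsilon)\tfrac{n^2}{2}$ forces $\sum_i\beta_i^2\le\epsilon+o(1)$ and, moreover, that at least $\tfrac{n^2}{2}(\epsilon-\sum_i\beta_i^2)$ cross-pairs are \emph{absent} from $\partial\mathcal H_n$. A missing cross-pair of type $\{i,j\}$ destroys $\alpha_l n$ structured triangles through it, where $l$ is the third vertex of the $\mathcal S_n$-edge on $\{i,j\}$; summing, and absorbing the lower-order overcount coming from triangles that lose two pairs at once, the number of structured edges is at most $(\tfrac{k-1}{6k^2}-\tfrac{\epsilon}{2k}+o(\epsilon))n^3$. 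The point is that the $\sum_i\beta_i^2$ contributions cancel between the two mechanisms, so this holds whether the shadow deficit is caused by unbalanced parts (loss through the quadratic term of the identity) or by genuinely missing pairs (loss through destroyed triangles). Hence $d(\mathcal H_n)\le\tfrac{k-1}{k^2}-\tfrac{3\epsilon}{k}+o(\epsilon)+6\delta'$, and if the stability parameter $\delta'$ can be kept below $\epsilon/(2k)$ this contradicts $y^{*}\ge(k-1)/k^2$, completing the proof.

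\textbf{The main obstacle} is precisely keeping the true number of non-structured edges well below $\tfrac{\epsilon}{k}n^3$: because $\mathcal H_n$ sits $\epsilon$ away from the Steiner point in the shadow coordinate, Theorem~\ref{THM:mian-cancellative-steiner-stability} as stated only yields $\delta'=O_k(\epsilon)$, not $o(\epsilon)$, so the $\delta' n^3$ exceptional edges cannot be discarded naively, and the same issue underlies the two places above where I asserted certain bad pairs or bad triangles are $o(n^2)$ or $o(\epsilon n^3)$. The fix I would pursue is a cleaning step using cancellativity in the form: for every $\{a,b\}\in\partial\mathcal H_n$ the set $S_{ab}=\{x:\{a,b,x\}\in\mathcal H_n\}$ is \emph{independent} in $\partial\mathcal H_n$. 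Since $\partial\mathcal H_n$ is within $o(n^2)$ edges of the complete $k$-partite graph on the $V_i$, an independent set of linear size is essentially confined to a single part; if $\{a,b\}$ is a cross-pair of type $\{i,j\}$ whose $S_{ab}$ meets $V_l$ in a positive fraction --- which holds for all but $o(n^2)$ such pairs, as most cross-pairs carry $\Omega(n)$ structured edges --- then any $x\in S_{ab}$ outside $V_l$ is joined in the shadow to that fraction of $V_l$, contradicting independence. This forces $S_{ab}\subseteq V_l$ up to $o(n)$ vertices for almost all cross-pairs, which shows the non-structured edges number $o(n^3)$ and the exceptional within-part and missing-pair quantities are $o(n^2)$; re-running the count above with this cleaned partition then makes every error term genuinely negligible against the $\tfrac{3\epsilon}{k}$ gain. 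Making this cleaning quantitative and uniform over $\mathcal S_n\in{\rm STS}(k)$ --- in effect re-deriving, in this neighbourhood, the exact statement behind Theorem~\ref{thm-cancellative-exact} --- is the part demanding the most care; everything else is the elementary Lagrange/Kruskal--Katona-type estimate above, together with the observation that it is the strict inequality $d(\partial\mathcal H_n)=\tfrac{k-1}{k}-\epsilon<\tfrac{k-1}{k}$ that produces the loss.
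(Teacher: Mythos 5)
Your right-hand inequality and the $k=3$ shortcut are fine, and your overall plan for the left-hand inequality (stability around the Steiner point, a Lagrange/Kruskal--Katona count inside an approximate Steiner blowup, then a cleaning step based on the fact that codegree neighbourhoods are independent in the shadow, cf.\ Observation~\ref{OBS:cancellative-pair-co-neighbor-is-independent}) is the same skeleton as the paper's Section~\ref{SEC:feasible-region}, which proves the quantitative statement Theorem~\ref{THM:application-stability-left-is-small}. The genuine gap is exactly the step you flag at the end and then claim to fix: making the non-structured edges and the exceptional shadow quantities negligible against the $\Theta(\epsilon)n^3$ gain. Your fix as sketched does not work. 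First, $\partial\mathcal{H}_n$ is \emph{not} within $o(n^2)$ edges of the complete $k$-partite graph: it is missing $\Theta(\epsilon n^2)$ cross-pairs by hypothesis, and it may in addition contain within-part pairs that the blowup structure alone does not control (each such pair is only guaranteed codegree $1$, so their number is not bounded by the $\delta' n^3$ unstructured edges in any useful way); hence even your deduction $\sum_i\beta_i^2\le\epsilon+o(1)$ needs a prior vertex-deletion step such as Lemma~\ref{LEMMA:cancellative-3-graph-stability-decompose-vertex-set}. Second, with $\Theta(\epsilon n^2)$ missing cross-pairs, the independence argument yields only a set of roughly $100\epsilon k n$ bad vertices and hence $O(\epsilon k n^3)$ unstructured edges (this is precisely Claim~\ref{CLAIM-B1-upper-bound}); the constant here ($\approx 50k$) dwarfs the gain constant ($\approx (k-1)/4k^2$), so "re-running the count with the cleaned partition" cannot conclude that the errors are $o(\epsilon n^3)$, and the required relation $\delta'<\epsilon/(2k)$ is unattainable since the stability error is in fact $O(\epsilon^{1/2})n^3$.

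What the paper does instead, and what is missing from your proposal, is the accounting of Lemma~\ref{LEMMA:cancelltive-pair-G-k-partite-upper-bound}: one re-measures the deficit $\epsilon_1'$ of the high-codegree shadow graph $G_{\ell}'$ and splits into two cases. If $\epsilon_1'>400k^3\epsilon$, the blowup count (Lemmas~\ref{LEMMA:edges-in-S-blowup-3-graph} and~\ref{LEMMA:edges-in-S-colorable-3-graph}) is applied with deficit $\epsilon_1'$, and the gain $\frac{k-1}{4k^2}\epsilon_1' n^3$ absorbs the $O(\epsilon k n^3)$ loss; if $\epsilon_1'\le 400k^3\epsilon$, the bad edges with two high-codegree pairs contain two bad vertices and number only $O(\epsilon^2 n^3)$, while the remaining bad classes are charged against the deficit of $G_{\ell}\cap G_{\ell}'$. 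In other words, losses are charged against the (possibly larger) cleaned deficit rather than against $\epsilon$ itself; your sketch contains no mechanism of this kind. A further, minor, inaccuracy: triangles losing two pairs at once are not lower order (the missing graph can have $\Theta(\epsilon n^3)$ cherries), so your constant $\epsilon/2k$ is not justified; the paper takes the factor $1/2$ and controls only the triples with all three pairs missing via inequality $(\ref{equ:fesible-region-cancel-3-left})$, which merely changes the constant and is harmless for the theorem.
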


{\bf Remark.}
Our proof shows that a linear dependency between $\delta$ and $\epsilon$ is sufficient for
Theorem~\ref{THM:local-max-cancelltive-3-fesible-region}.

The remainder of this paper is organized as follows.
In Section~\ref{SEC:proof-of-stability} we prove Theorem~\ref{THM:mian-cancellative-steiner-stability}.
In Section~\ref{SEC:feasible-region} we prove Theorem~\ref{THM:local-max-cancelltive-3-fesible-region}.
In Section~\ref{SEC:exact-result} we prove Theorem~\ref{thm-cancellative-exact}.
We include some remarks in Section~\ref{SEC:remarks}.

%%%%%%%%%%%%%%%%%%%%%%%%%%%%%%%%%%%%%%%%%%%%%%%%
\section{Proof of Theorem~\ref{THM:mian-cancellative-steiner-stability}}\label{SEC:proof-of-stability}
In this section we prove Theorem~\ref{THM:mian-cancellative-steiner-stability}.
Let us present some preliminary results first.

\subsection{Preliminaries}
For an $r$-graph $\mathcal{H}$ and a set $S \subset V(\mathcal{H})$
denote by $\mathcal{H}[S]$ the induced subgraph of $\mathcal{H}$ on $S$.
For two disjoint set $S,T \subset V(\mathcal{H})$ denote by $\mathcal{H}[S,T]$
the collection of edges in $\mathcal{H}$ that have nonempty intersection with both $S$ and $T$.

For a vertex $v\in V(\mathcal{H})$ the {\em neighborhood} of $v$ is
\begin{align}
N_{\mathcal{H}}(v) = \left\{u\in V(\mathcal{H})\setminus \{v\}\colon
                       \exists A\in \mathcal{H} \text{ such that }\{u,v\}\subset A\right\}, \notag
\end{align}
and the {\em link} $L_{\mathcal{H}}(v)$ of $v$ in $\mathcal{H}$ is
\begin{align}
L_{\mathcal{H}}(v) = \left\{A\in \partial\mathcal{H}\colon A\cup\{v\} \in \mathcal{H}\right\}. \notag
\end{align}
The {\em degree} of $v$ is $d_{\mathcal{H}}(v) = |L_{\mathcal{H}}(v)|$.
Denote by $\Delta(\mathcal{H})$ and $\delta(\mathcal{H})$ the {\em maximum degree} and the {\em minimum degree} of $\mathcal{H}$, respectively.
We will omit the subscript $\mathcal{H}$ if it is clear from the context.

For a pair of vertices $u,v\in V(\mathcal{H})$ the {\em neighborhood} of $uv$ (we use $uv$ as a shorthand for $\{u,v\}$) is
\begin{align}
N_{\mathcal{H}}(uv) = \left\{w\in V(\mathcal{H})\setminus \{u,v\}\colon
                       \exists A\in \mathcal{H} \text{ such that }\{u,v,w\}\subset A\right\}, \notag
\end{align}
and the size of $N_{\mathcal{H}}(uv)$ is called the {\em codegree} of $uv$.
Denote by $\Delta_{2}(\mathcal{H})$ and $\delta_{2}(\mathcal{H})$ the {\em maximum codegree}
and the {\em minimum codegree} of $\mathcal{H}$, respectively.

For a graph $G$ the {\em clique number} $\omega(G)$ of $G$
is the largest integer $\omega$ such that $K_{\omega} \subset G$.

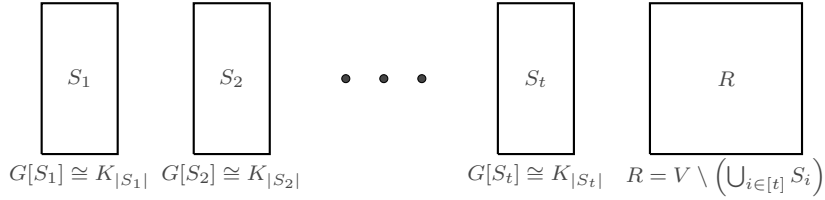
\begin{figure}[htbp]
\centering
\begin{tikzpicture}[xscale=1,yscale=1]
\draw[line width=0.8pt]
(0,0)--(1,0)--(1,2)--(0,2)--(0,0);
\draw[line width=0.8pt]
(2,0)--(3,0)--(3,2)--(2,2)--(2,0);
\draw[line width=0.8pt]
(6,0)--(7,0)--(7,2)--(6,2)--(6,0);
\draw[line width=0.8pt]
(8,0)--(10,0)--(10,2)--(8,2)--(8,0);
%\draw[line width=0.5pt]
%(6,-0.45)--(6.1,-0.65)--(9.9,-0.65)--(10,-0.45);
\begin{scriptsize}
\draw[color=uuuuuu] (0.5,1) node {$S_1$};
\draw[color=uuuuuu] (0.5,0-0.3) node {$G[S_1] \cong K_{|S_1|}$};
\draw[color=uuuuuu] (2.5,1) node {$S_2$};
\draw[color=uuuuuu] (2.5,0-0.3) node {$G[S_2] \cong K_{|S_2|}$};
\draw[color=uuuuuu] (6.5,1) node {$S_t$};
\draw[color=uuuuuu] (6.5,0-0.3) node {$G[S_t] \cong K_{|S_t|}$};
\draw[color=uuuuuu] (9,1) node {$R$};
\draw[color=uuuuuu] (9,0-0.3) node {$R = V\setminus\left(\bigcup_{i\in[t]}S_i\right)$};
%\draw[color=uuuuuu] (8,0-0.9) node {$G_{t-1}, \mathcal{H}_{t-1}$};
\draw [fill=uuuuuu] (4,1) circle (1.5pt);
\draw [fill=uuuuuu] (4.5,1) circle (1.5pt);
\draw [fill=uuuuuu] (5,1) circle (1.5pt);
\end{scriptsize}
\end{tikzpicture}
\caption{A clique expansion of graph $G$.}
\label{FIG:clique-expansion}
\end{figure}

\begin{definition}[Clique expansion]
Let $t\ge 1$, $\kappa \ge 1$ be positive integers and $G$ be a graph on the set $V$.
\begin{itemize}
\item[(a)]
A $(t+1)$-tuple $(S_1,\ldots, S_{t},R)$, where $S_1, \ldots, S_t$ are pairwise disjoint subsets of $V(G)$ and
$R = V\setminus\left(S_1\cup\cdots \cup S_t\right)$,
is a clique expansion of $G$ if $G[S_i]$ is a complete graph for every $i\in[t]$ (see Figure~\ref{FIG:clique-expansion}).
\item[(b)]
A clique expansion $(S_1,\ldots, S_{t},R)$ is maximal if
the size of $S_i$ equals the clique number of the induced subgraph of $G$ on $V\setminus\left(S_1\cup\cdots \cup S_{i-1}\right)$
for $i \in [t]$.
\item[(c)]
We say a clique expansion $(S_1,\ldots, S_{t},R)$ has a threshold $\kappa$ if $|S_i| \ge \kappa$ for every $i\in[t]$
but $\omega\left(G[R]\right)< \kappa$.
\end{itemize}
\end{definition}

The following observation is immediate from the definition.

\begin{observation}\label{OBS:max-clique-expansion}
Suppose that $(S_1,\ldots, S_{t},R)$ is a maximal clique expansion of $G$.
Then
\begin{itemize}
\item[(a)]
$|S_1| \ge \cdots \ge |S_t|$, and
\item[(b)]
every vertex in $V\setminus\left(S_1\cup\cdots \cup S_{i}\right)$ is adjacent to at most $|S_i| - 1$ vertices in $S_i$ for $i\in[t]$.
%In particular, ???
%\begin{align}
%E_i = \sum_{j=1}^{i}e_{j} \le \sum_{j=1}^{i}\left(|T_i|(\omega_i - 1) + \binom{\omega_i}{2}\right)
%\le \sum_{j=1}^{i}(\omega_i-1)n = \left(W_i-i\right)n. \notag
%\end{align}
\end{itemize}
\end{observation}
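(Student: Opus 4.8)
The plan is to unwind both parts directly from the definition of a maximal clique expansion, with no auxiliary machinery. For $i\in[t]$ I would introduce the shorthand $G_i := G[V\setminus(S_1\cup\cdots\cup S_{i-1})]$, so that $G_1 = G$ and, by the maximality condition (b) in the definition, $|S_i| = \omega(G_i)$ for every $i$. Both assertions then become statements about how $\omega(G_i)$ behaves and about what the equality $|S_i|=\omega(G_i)$ forbids.

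For part (a): since $S_i \subseteq V(G_i)$ (because $S_i$ is disjoint from $S_1,\ldots,S_{i-1}$), we have $V(G_{i+1}) = V(G_i)\setminus S_i \subseteq V(G_i)$, so $G_{i+1}$ is an induced subgraph of $G_i$. Hence $\omega(G_{i+1}) \le \omega(G_i)$, i.e.\ $|S_{i+1}| \le |S_i|$, and iterating over $i$ gives $|S_1|\ge\cdots\ge|S_t|$.

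For part (b): fix $i\in[t]$ and take any $v\in V\setminus(S_1\cup\cdots\cup S_i)$; note that $v\in V(G_i)$ and $v\notin S_i$. Since $S_i\subseteq V(G_i)$, the induced subgraph $G_i[S_i]$ equals $G[S_i]$ and is therefore complete. If $v$ were adjacent (in $G$, equivalently in $G_i$) to \emph{all} $|S_i|$ vertices of $S_i$, then $S_i\cup\{v\}$ would be a clique in $G_i$ of size $|S_i|+1$, contradicting $\omega(G_i) = |S_i|$. So $v$ has at most $|S_i|-1$ neighbours in $S_i$.

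I do not anticipate a genuine obstacle here: each part is a one-line consequence of the monotonicity of the clique number under passing to induced subgraphs, combined with the defining equality $|S_i| = \omega(G_i)$. The only places needing a moment's care are the bookkeeping that $S_i$ and the tested vertex $v$ really lie in $V(G_i)$ (so that ``complete'' and ``adjacent'' are interpreted in the right graph), and checking that the argument for (b) still reads sensibly in the degenerate case $|S_i|=1$, where it simply asserts that $v$ is non-adjacent to the unique vertex of $S_i$.
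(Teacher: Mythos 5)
Your proof is correct and is exactly the reasoning the paper has in mind: the paper states this observation without proof as ``immediate from the definition,'' and your argument simply spells out that immediacy, using $|S_i|=\omega(G_i)$ together with monotonicity of the clique number under induced subgraphs for (a) and the fact that a common neighbour of all of $S_i$ would extend it to a larger clique in $G_i$ for (b). Nothing further is needed.
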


We will need the following result due to Andr\'{a}sfai, Erd\H{o}s, and S\'{o}s \cite{AES74}.

\begin{theorem}[Andr\'{a}sfai-Erd\H{o}s-S\'{o}s \cite{AES74}]\label{THM:AES74}
Let $k \ge 2$ and $n\ge 1$ be positive integers.
Then every $K_{k+1}$-free graph on $n$ vertices with minimum degree greater than $\frac{3k-4}{3k-1}n$ is a $k$-partite graph.
\end{theorem}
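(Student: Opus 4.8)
The plan is to combine a short counting argument that pins down the clique structure of $G$ with an induction on $k$ whose base case is the triangle-free case. Throughout put $\delta=\delta(G)$ and observe that $\delta>\tfrac{3k-4}{3k-1}n$ exactly says $n-d(v)<\tfrac{3n}{3k-1}$ for every vertex $v$.

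\textbf{Step 1 (clique number and an almost-partition).} I would first show $\omega(G)=k$. Take a maximal clique $\{x_1,\dots,x_r\}$; since $G$ is $K_{k+1}$-free, $r\le k$, and by maximality every vertex lies in $B_i:=V\setminus N(x_i)$ for some $i$, so $V=\bigcup_{i=1}^{r}B_i$. As $|B_i|=n-d(x_i)<\tfrac{3n}{3k-1}$, this forces $n<\tfrac{3rn}{3k-1}$, hence $r>k-\tfrac13$, hence $r=k$. Thus $G$ contains a $K_k$, say on $\{x_1,\dots,x_k\}$; the sets $B_1,\dots,B_k$ cover $V$, and $\sum_i|B_i|=kn-\sum_i d(x_i)<\tfrac{3kn}{3k-1}$, so by inclusion--exclusion the set $Z$ of vertices lying in at least two of the $B_i$ satisfies $|Z|\le\sum_i|B_i|-|V|<\tfrac{n}{3k-1}$. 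Consequently the classes $C_i:=B_i\setminus Z$ are pairwise disjoint, cover $V\setminus Z$, and each has size $>\tfrac{n}{3k-1}$. Finally, an edge $uv$ inside some $B_i$ would (unless one of $u,v$ misses a further clique vertex $x_j$, hence lies in $B_i\cap B_j\subseteq Z$) let one extend the $K_{k-1}$ on $\{x_1,\dots,x_k\}\setminus\{x_i\}$ to a $K_{k+1}$; so every edge inside $B_i$ meets $Z$, each $C_i$ is independent, and $G-Z$ is $k$-partite.

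\textbf{Step 2 (the base case and local reduction).} For $k=2$ I would finish directly and self-containedly: a triangle-free non-bipartite $G$ has a shortest odd cycle, which by triangle-freeness and minimality must be a $C_5$ (otherwise $N(v_i)\cap N(v_{i+1})$ and $N(v_i)\cap N(v_{i+3})$ would be empty and the $|N(v_i)|$ too small to sum to the required amount); since $N(v_i)\cap N(v_{i+1})=\emptyset$, every vertex lies in at most two of $N(v_1),\dots,N(v_5)$, so $5\cdot\tfrac{2n}{5}<\sum_{i=1}^{5}|N(v_i)|\le 2n$, a contradiction. For $k\ge 3$ I would apply the inductive hypothesis locally: for any $v$, $G[N(v)]$ is $K_k$-free with $\delta(G[N(v)])\ge\delta+d(v)-n$, and since $\tfrac{3}{3k-4}d(v)\ge\tfrac{3\delta}{3k-4}>\tfrac{3n}{3k-1}>n-\delta$ this gives $\delta(G[N(v)])>\tfrac{3(k-1)-4}{3(k-1)-1}\,|N(v)|$, so $G[N(v)]$ is $(k-1)$-partite by induction.

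\textbf{Step 3 (absorbing $Z$ --- the crux).} It remains, for $k\ge3$, to extend the $k$-colouring of $G-Z$ to all of $G$. I expect this to be the hard part: the extremal non-$k$-partite example (a suitable join of a blow-up of $C_5$ with a complete $(k-2)$-partite graph) has minimum degree exactly $\tfrac{3k-4}{3k-1}n$, so the strict inequality must genuinely be used. The plan is to show, for $z\in Z$, using the size bounds $|C_i|>\tfrac{n}{3k-1}$ together with $n-d(z)<\tfrac{3n}{3k-1}$, that $z$ is completely non-adjacent to at least one class $C_i$ (and so may be coloured $i$), coordinating the different vertices of $Z$ by a local recolouring/switching argument; and when some $z$ resists, to extract from the $(k-1)$-partite structure of $G[N(z)]$ given by Step 2 --- combined with the clique $\{x_1,\dots,x_k\}$ --- a forbidden configuration (a transversal $K_k$, or, after discarding $k-2$ of the classes, an odd configuration to which the $\sum|N(\cdot)|\le(\text{const})\cdot n$ count of the base case applies) that contradicts either $K_{k+1}$-freeness or the strict degree bound. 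The delicate case analysis needed to make this absorption precise, rather than the induction itself, is where the real work lies.
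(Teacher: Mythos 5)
The paper never proves Theorem~\ref{THM:AES74}; it is imported as a classical result from \cite{AES74}, so your attempt has to be judged on its own merits rather than against an internal argument. Your Steps 1 and 2 are sound: the maximal-clique count giving $\omega(G)=k$, the covering $V=\bigcup_i B_i$ with $|Z|<\tfrac{n}{3k-1}$, the independence of the classes $C_i=B_i\setminus Z$, the shortest-odd-cycle argument for the triangle-free base case, and the verification that $G[N(v)]$ is $K_k$-free with minimum degree above the $(k-1)$-threshold are all correct (the asserted bound $|C_i|>\tfrac{n}{3k-1}$ is stated without derivation, but it does follow from $|C_j|\le |B_j|<\tfrac{3n}{3k-1}$ for $j\ne i$ together with $|Z|<\tfrac{n}{3k-1}$). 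This skeleton is broadly the classical strategy.

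The genuine gap is Step 3, and you flag it yourself: the ``absorption'' of $Z$ is presented as a plan, not an argument, and it is precisely where the theorem lives. Note that once the clique $\{x_1,\dots,x_k\}$ is fixed, any proper $k$-colouring must give $x_i$ colour $i$ and must colour every vertex of $C_i$ with colour $i$ (such a vertex is adjacent to all $x_j$, $j\ne i$), so there is no freedom to recolour $V\setminus Z$: what remains to prove is exactly that every $z\in Z$ is non-adjacent to the \emph{whole} of some class $C_i$ and that these assignments, together with the edges inside $Z$, yield a proper colouring. This does not follow from the counting you have set up: $z$ has fewer than $\tfrac{3n}{3k-1}$ non-neighbours while a class can have size up to nearly $\tfrac{3n}{3k-1}$, so the numbers are perfectly compatible with $z$ meeting every class, and edges within $Z$ still have to be coordinated; moreover the join of a blow-up of $C_5$ with a complete $(k-2)$-partite graph shows the statement fails at the threshold, so the strict inequality must enter in an essential, non-local way. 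Neither the switching argument nor the ``forbidden configuration'' extraction is formulated, let alone proved, so as written the proposal is a correct reduction plus an unproved claim that is essentially equivalent to the theorem; to complete it you would have to supply this absorption step (or follow the inductive argument of \cite{AES74}, which is organised around exactly this difficulty).
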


It will be convenient sometimes to consider $\mathcal{H}$ and $\partial\mathcal{H}$ separately.

\begin{definition}[Cancellative pair]\label{DFN:cancellative-pair}
Let $G$ be a graph on $V$ and $\mathcal{H}$ be a $3$-graph on the same vertex set $V$.
We say the pair $(G,\mathcal{H})$ is cancellative if $\partial\mathcal{H} \subset G$ and
it does not contain three distinct sets $A,B\in \mathcal{H}$ and $C\in G\cup\mathcal{H}$ such that $A\triangle B\subset C$.
We call $V$ the vertex set of the pair $(G,\mathcal{H})$.
\end{definition}

Let $\mathcal{H}$ be a cancellative $3$-graph and $U\subset V(\mathcal{H})$.
Then it is easy to see that the pair $\left((\partial\mathcal{H})[U], \mathcal{H}[U]\right)$ is cancellative
(note that $(\partial\mathcal{H})[U]$ and $\partial(\mathcal{H}[U])$ are not necessarily the same).

\begin{observation}\label{OBS:cancellative-pair-co-neighbor-is-independent}
Suppose that $(G,\mathcal{H})$ is a cancellative pair. Then
$N_{\mathcal{H}}(uv)$ is an independent set in $G$ for every $uv\in \partial\mathcal{H}$.
\end{observation}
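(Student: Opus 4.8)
The plan is to argue by contradiction directly from Definition~\ref{DFN:cancellative-pair}, exhibiting the forbidden triple $A,B,C$ explicitly. Suppose the conclusion fails for some $uv\in\partial\mathcal{H}$, so that $N_{\mathcal{H}}(uv)$ contains two distinct vertices $w_1,w_2$ with $w_1w_2\in G$. By the definition of $N_{\mathcal{H}}(uv)$, and since $\mathcal{H}$ is a $3$-graph, the only edge of $\mathcal{H}$ through $uw_1v$ (resp. $uw_2v$) is $\{u,v,w_1\}$ (resp. $\{u,v,w_2\}$), so I can set $A:=\{u,v,w_1\}\in\mathcal{H}$ and $B:=\{u,v,w_2\}\in\mathcal{H}$. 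Note $w_1,w_2\notin\{u,v\}$, so these are genuine $3$-element edges, and $A\ne B$ because $w_1\ne w_2$.

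Next I would compute $A\triangle B=\{w_1,w_2\}$ and take $C:=\{w_1,w_2\}$. Since $w_1w_2\in G$ we have $C\in G\subset G\cup\mathcal{H}$, and $C$ is distinct from both $A$ and $B$ for the trivial reason that $|C|=2<3=|A|=|B|$. Thus $A,B\in\mathcal{H}$ and $C\in G\cup\mathcal{H}$ are three distinct sets with $A\triangle B\subset C$ (in fact equal to $C$), contradicting the assumption that $(G,\mathcal{H})$ is a cancellative pair. Hence no such $w_1,w_2$ exist and $N_{\mathcal{H}}(uv)$ is independent in $G$.

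There is no real obstacle here: the statement is essentially an unwinding of the definition, and the only point requiring (minor) care is the bookkeeping that $A,B,C$ are pairwise distinct and that $C$ lands in $G\cup\mathcal{H}$ rather than only in $\mathcal{H}$ — both of which are immediate from the cardinality count and from $w_1w_2\in G$. (The hypothesis $uv\in\partial\mathcal{H}$ is only there to make the statement nonvacuous; the argument itself uses only the two edges $A,B$.)
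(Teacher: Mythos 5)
Your argument is correct and is exactly the intended reading of Definition~\ref{DFN:cancellative-pair}: the paper states this as an immediate observation, and the configuration $A=\{u,v,w_1\}$, $B=\{u,v,w_2\}$, $C=\{w_1,w_2\}\in G$ with $A\triangle B = C$ is precisely the forbidden triple (the clause $C\in G\cup\mathcal{H}$ exists to rule out exactly this situation). Nothing is missing; the distinctness and cardinality bookkeeping you include is all that needs checking.
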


The following results concerning cancellative pairs were proved in \cite{LM19A}.

\begin{theorem}[\cite{LM19A}]\label{thm-calcel-3-induction-shadow}
Let $m\ge 1$ be an integer and $(G,\mathcal{H})$ be a cancellative pair on a set $V$ of size $m$.
Suppose that $|G|=xm^2/2$ for some real number $x \in [0,1]$. Then
\begin{align}
|\mathcal{H}|\le \frac{x(1-x)}{6}m^3+3m^2. \notag
\end{align}
\end{theorem}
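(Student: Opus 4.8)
The plan is to argue by induction on $m$. When $m$ is at most an absolute constant we have $|\mathcal{H}|\le\binom{m}{3}\le 3m^2$ and there is nothing to prove, so assume $m$ is large and that the bound holds for every cancellative pair on fewer than $m$ vertices. Write $g=|G|$, and for a graph $\Gamma$ let $\alpha(\Gamma)$ denote its independence number; also set $\Phi(n,e):=\tfrac{en}{3}-\tfrac{2e^2}{3n}+3n^2$, which is exactly the asserted bound $\tfrac{x(1-x)}{6}n^3+3n^2$ rewritten via $x=2e/n^2$.

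The engine of the induction is the following local fact. For every $uv\in\partial\mathcal{H}$ the set $N_{\mathcal{H}}(uv)$ is independent in $G$ by Observation~\ref{OBS:cancellative-pair-co-neighbor-is-independent}; applying this to the pairs $wu$ with $u\in N_G(w)$, and noting that the neighbourhood of $u$ inside the link $L_{\mathcal{H}}(w)$ is precisely $N_{\mathcal{H}}(wu)$, we see that $L_{\mathcal{H}}(w)$ is a triangle-free subgraph of $G[N_G(w)]$ \emph{every vertex-neighbourhood of which is independent in $G$}. Two consequences I will use are
\begin{align}
d_{\mathcal{H}}(w)=e\bigl(L_{\mathcal{H}}(w)\bigr)\le\min\Bigl\{\tfrac14 d_G(w)^2,\ \tfrac12 d_G(w)\,\alpha\bigl(G[N_G(w)]\bigr)\Bigr\},\notag
\end{align}
the first from triangle-freeness and the second because every vertex-neighbourhood of $L_{\mathcal{H}}(w)$ has size at most $\alpha(G[N_G(w)])$. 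I will also use the trivial inequality $\alpha(G)\le m-\delta(G)$ (a vertex of a largest independent set sends all of its $\ge\delta(G)$ edges outside that set), together with the fact that a graph whose independence number is small relative to its order must be correspondingly dense (Tur\'an applied to the complement).

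For the inductive step, let $v$ be a vertex of minimum degree in $G$ and put $a=d_G(v)$, $b=d_{\mathcal{H}}(v)$. Since $(G[V\setminus v],\mathcal{H}[V\setminus v])$ is a cancellative pair on $m-1$ vertices whose graph has $g-a$ edges, the induction hypothesis gives $|\mathcal{H}|-b=|\mathcal{H}[V\setminus v]|\le\Phi(m-1,g-a)$, so it suffices to prove
\begin{align}
b\le\Phi(m,g)-\Phi(m-1,g-a)=\frac{g}{3}+\frac{a(m-1)}{3}+\frac{2\bigl(g^2+ma(a-2g)\bigr)}{3m(m-1)}+6m-3.\notag
\end{align}
If $x\le 2/3$, one substitutes $a=\delta(G)\le 2g/m$ and $b\le a^2/4$; the right-hand side above is $\tfrac{x(1-x)}{2}m^2+O(m)$ while $b\le\tfrac{x^2}{4}m^2$, and since $\tfrac{x^2}{4}\le\tfrac{x(1-x)}{2}\iff x\le 2/3$ the inequality follows, with the lower-order terms absorbed by the $6m-3$ slack.

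The range $x>2/3$ is the main obstacle. There the triangle-free bound $b\le a^2/4$ is too weak when $a$ is close to the average degree $xm$, and one must instead exploit that if the link $L_{\mathcal{H}}(v)$ is close to extremal (close to a complete bipartite graph) then $G[N_G(v)]$ is forced to be sparse, which in turn caps $g=|G|$; conversely if $G[N_G(v)]$ has small independence number then it is dense, so $g$ is large and the right-hand side above grows. Concretely I would combine the second bound on $b$ with $\alpha(G[N_G(v)])\le\alpha(G)\le m-a$ and the Tur\'an-type lower bound on $e(G[N_G(v)])$ coming from its independence number to bound $g$ from below, and handle the nearly-extremal graphs $G$ — which are close to complete multipartite graphs — by appealing to the Andr\'asfai--Erd\H{o}s--S\'os theorem (Theorem~\ref{THM:AES74}). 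Making this trade-off between the size of the link and the forced density of $G$ precise and uniform over all $x\in(2/3,1]$ is the technical heart of the proof; everything else is the base case and routine algebra.
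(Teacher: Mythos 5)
Since Theorem~\ref{thm-calcel-3-induction-shadow} is quoted from \cite{LM19A} and not proved in this paper, there is no internal proof to compare against, so I assess your argument on its own terms. The groundwork is correct: the links $L_{\mathcal{H}}(w)$ are triangle-free with every link-neighbourhood $N_{\mathcal{H}}(wu)$ independent in $G$ (this follows from Observation~\ref{OBS:cancellative-pair-co-neighbor-is-independent}), your formula for $\Phi(m,g)-\Phi(m-1,g-a)$ checks out, and the case $x\le 2/3$ does close -- but not by the one-line substitution you describe, since the right-hand side is not monotone in $a$; you need to observe that the deficit function is concave in $a$ (for $m\ge 4$) and verify both endpoints $a=0$ and $a=2g/m$, where the main term at the second endpoint is $g(1-3g/m^2)\ge 0$ exactly because $x\le 2/3$, with the $6m-3$ slack absorbing the lower-order terms. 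That is a fixable omission.

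The genuine gap is the range $x>2/3$, which you concede is ``the technical heart'' -- and this is precisely where the theorem has content, since the tight configurations are Steiner blowups with $x=(k-1)/k\ge 2/3$. The ingredients you name cannot close the induction at a minimum-degree vertex. Writing $a=\alpha m$, your two bounds give $b\le\min\bigl\{\tfrac{\alpha^2}{4},\tfrac{\alpha(1-\alpha)}{2}\bigr\}m^2$, whereas the induction requires $b\le\bigl(\tfrac{x+x^2}{6}+\tfrac{\alpha(1-2x)}{3}\bigr)m^2+O(m)$; the difference between the required bound and $\tfrac{\alpha(1-\alpha)}{2}m^2$ vanishes at $\alpha=x$ but has positive derivative $\tfrac{2x-1}{6}$ in $\alpha$ there, so it is negative on a whole interval of $\alpha$ just below $x$ whenever $x>2/3$ (concretely, $x=6/7$, $\alpha=3/4$: required $\approx 0.0868\,m^2$, available $0.09375\,m^2$). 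Your proposed rescue does not supply the missing structure: a graph on $\Theta(m)$ vertices with independence number $\Theta(m)$ is only forced by complement-Tur\'an to have $O(m)$ edges, which is negligible against $g=xm^2/2$, so no contradiction arises at the $m^2$ scale; and Theorem~\ref{THM:AES74} needs a $K_{k+1}$-freeness hypothesis that a cancellative pair does not give you for $G$. So in the main range your induction cannot be closed from the stated local facts, and something genuinely different is needed -- for instance deleting a clique or a codegree neighbourhood rather than a single minimum-degree vertex, in the spirit of Lemmas~\ref{LEMMA:cancellative-3-graphs-clique} and~\ref{LEMMA:cancellative-clique-expansion}. As written, the proposal does not prove the theorem.
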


\begin{lemma}[\cite{LM19A}]\label{LEMMA:cancellative-3-graphs-clique}
Let $(G,\mathcal{H})$ be a cancellative pair on a set $V$.
Suppose that $G[S]$ is a complete graph for some set $S\subset V$.
Then
\[
\sum_{v\in S}d_{\mathcal{H}}(v) \le |\partial\mathcal{H}|.
\]
\end{lemma}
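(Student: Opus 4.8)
The plan is to use the defining property of a cancellative pair to show that the links of the vertices in $S$, viewed as subsets of $\partial\mathcal{H}$, are pairwise disjoint. Concretely, fix two distinct vertices $u, v \in S$ and suppose for contradiction that some pair $P \in \partial\mathcal{H}$ lies in both $L_{\mathcal{H}}(u)$ and $L_{\mathcal{H}}(v)$. Then $A := P \cup \{u\}$ and $B := P \cup \{v\}$ are both edges of $\mathcal{H}$, and they are distinct since $u \neq v$. Their symmetric difference is $A \triangle B = \{u, v\}$, which is an edge of $G$ because $G[S]$ is complete and $u, v \in S$. Now I need the set $C := \{u,v\}$ to be genuinely different from both $A$ and $B$; this is immediate since $|C| = 2 < 3 = |A| = |B|$. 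Thus we have produced distinct $A, B \in \mathcal{H}$ and $C \in G$ with $A \triangle B \subset C$, contradicting the assumption that $(G,\mathcal{H})$ is cancellative.

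Having established that $\{L_{\mathcal{H}}(v) : v \in S\}$ is a family of pairwise disjoint subsets of $\partial\mathcal{H}$, the bound follows by a counting argument:
\[
\sum_{v \in S} d_{\mathcal{H}}(v) = \sum_{v \in S} |L_{\mathcal{H}}(v)| = \left| \bigcup_{v \in S} L_{\mathcal{H}}(v) \right| \le |\partial\mathcal{H}|,
\]
where the first equality is the definition of degree, the second uses disjointness, and the inequality uses $L_{\mathcal{H}}(v) \subseteq \partial\mathcal{H}$ for every $v$.

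I do not expect any real obstacle here; the only point that requires a moment's care is checking the three sets $A$, $B$, $C$ are pairwise distinct (so that the forbidden configuration in Definition~\ref{DFN:cancellative-pair} genuinely applies), and this is handled purely by a size count since $C$ is a pair while $A, B$ are triples. One should also note that the edge $\{u,v\}$ being in $G$ — rather than in $\partial\mathcal{H}$ or $\mathcal{H}$ — is exactly why the definition of cancellative pair allows $C \in G \cup \mathcal{H}$; this is the one place the generality of Definition~\ref{DFN:cancellative-pair} (as opposed to just working with $\mathcal{H}$ and $\partial\mathcal{H}$) is used, since $\{u,v\}$ need not be a shadow edge if no edge of $\mathcal{H}$ contains it.
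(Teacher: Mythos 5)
Your proof is correct: the disjointness of the links $L_{\mathcal{H}}(u)$, $L_{\mathcal{H}}(v)$ for distinct $u,v\in S$ follows exactly as you argue, since $A=P\cup\{u\}$, $B=P\cup\{v\}$, $C=\{u,v\}\in G$ would form the forbidden configuration of Definition~\ref{DFN:cancellative-pair} (with $\subset$ read as $\subseteq$, which is the reading forced by Observation~\ref{OBS:cancellative-pair-co-neighbor-is-independent}), and the counting step is immediate. The present paper only cites this lemma from \cite{LM19A} without reproducing a proof, and your argument is essentially the standard one given there, so there is nothing to add.
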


Lemma~\ref{LEMMA:cancellative-3-graphs-clique} yields the following result.

\begin{lemma}\label{LEMMA:cancellative-clique-expansion}
Let $t\ge 1$ be a positive integer and $(G,\mathcal{H})$ be a cancellative pair on $n$ vertices.
Suppose that $(S_1,\ldots,S_t,R)$ is a clique expansion of $G$.
Then
\begin{align}
|\mathcal{H}| \le |\mathcal{H}[R]| + t |\partial\mathcal{H}|. \notag
\end{align}
\end{lemma}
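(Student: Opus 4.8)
The plan is to induct on $t$, peeling off one clique $S_i$ at a time and using Lemma~\ref{LEMMA:cancellative-3-graphs-clique} to charge the deleted edges to copies of the shadow. First I would observe that, for any clique expansion $(S_1,\ldots,S_t,R)$ and any $i\in[t]$, the quadruple $(S_1,\ldots,S_{i-1},R\cup S_i)$ -- or rather, the restriction of the expansion to the vertex set $V'=V\setminus(S_1\cup\cdots\cup S_{i-1})$ -- is itself a clique expansion of $G[V']$, so the induction hypothesis applies to smaller values of $t$ once we decide which clique to remove. The base case $t=0$ is vacuous: there the expansion is just $(R)$ with $R=V$, and $|\mathcal{H}|=|\mathcal{H}[R]|$.

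For the inductive step, set $V_1 = V\setminus S_1$ and consider the pair $(G[V_1],\mathcal{H}[V_1])$, which is again cancellative. I would split
\[
|\mathcal{H}| = |\mathcal{H}[V_1]| + \bigl|\{E\in\mathcal{H} : E\cap S_1 \neq \emptyset\}\bigr|.
\]
The second term counts edges meeting $S_1$; each such edge $E$ contains some vertex $v\in S_1$, so it is counted in $\sum_{v\in S_1} d_{\mathcal{H}}(v)$ (possibly with multiplicity if $|E\cap S_1|\ge 2$, which only helps the inequality). Since $G[S_1]$ is a complete graph, Lemma~\ref{LEMMA:cancellative-3-graphs-clique} gives $\sum_{v\in S_1} d_{\mathcal{H}}(v) \le |\partial\mathcal{H}|$, hence the number of edges of $\mathcal{H}$ meeting $S_1$ is at most $|\partial\mathcal{H}|$. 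Now $(S_2,\ldots,S_t,R)$ is a clique expansion of $G[V_1]$ on $n-|S_1|$ vertices, so by the induction hypothesis $|\mathcal{H}[V_1]| \le |\mathcal{H}[V_1][R]| + (t-1)|\partial(\mathcal{H}[V_1])|$. Finally $\mathcal{H}[V_1][R] = \mathcal{H}[R]$ since $R\subset V_1$, and $|\partial(\mathcal{H}[V_1])| \le |\partial\mathcal{H}|$ because $\partial(\mathcal{H}[V_1]) = (\partial\mathcal{H})$ restricted to pairs inside $V_1$. Combining the three bounds yields $|\mathcal{H}| \le |\mathcal{H}[R]| + |\partial\mathcal{H}| + (t-1)|\partial\mathcal{H}| = |\mathcal{H}[R]| + t|\partial\mathcal{H}|$, as desired.

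The only point that needs a little care -- and the closest thing to an obstacle -- is checking that the hypotheses of Lemma~\ref{LEMMA:cancellative-3-graphs-clique} are really met after restriction: one must confirm that $(G[V_1],\mathcal{H}[V_1])$ is a cancellative pair (immediate from Definition~\ref{DFN:cancellative-pair}, since forbidden configurations inside $V_1$ are forbidden in the whole pair, and $\partial(\mathcal{H}[V_1]) \subset G[V_1]$ because $\partial\mathcal{H}\subset G$), and that $S_1$ still spans a clique in $G[V_1]$, which is trivial since $S_1\subset V_1$ and being a clique is inherited. Everything else is bookkeeping about how shadows and induced subgraphs interact, and the multiplicity issue when an edge meets $S_1$ in two or three vertices is harmless because it only makes the left side of the counting inequality larger.
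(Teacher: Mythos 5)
Your proof is correct and rests on the same key mechanism as the paper's — Lemma~\ref{LEMMA:cancellative-3-graphs-clique} applied to each clique $S_i$, charging at most $|\partial\mathcal{H}|$ edges per clique — the only difference being that the paper does it in a single step (every edge outside $\mathcal{H}[R]$ meets some $S_i$, so $|\mathcal{H}|\le|\mathcal{H}[R]|+\sum_{i\in[t]}\sum_{v\in S_i}d_{\mathcal{H}}(v)$, then the clique lemma is applied once per $i$), whereas you peel off one clique at a time by induction and restriction to induced cancellative subpairs. One small imprecision: $\partial(\mathcal{H}[V_1])$ need not equal the restriction of $\partial\mathcal{H}$ to pairs inside $V_1$ (the paper remarks after Definition~\ref{DFN:cancellative-pair} that $(\partial\mathcal{H})[U]$ and $\partial(\mathcal{H}[U])$ can differ), but since $\partial(\mathcal{H}[V_1])\subseteq(\partial\mathcal{H})[V_1]\subseteq\partial\mathcal{H}$, the inequality $|\partial(\mathcal{H}[V_1])|\le|\partial\mathcal{H}|$ that your argument actually uses is valid, so this does not affect correctness.
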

\begin{proof}[Proof of Lemma~\ref{LEMMA:cancellative-clique-expansion}]
Notice that every edge in $\mathcal{H}$ either contains at least one vertex in $S_1\cup \cdots \cup S_{t}$
or is completely contained in $R$.
So,
\begin{align}
|\mathcal{H}|
\le |\mathcal{H}[R]| + \sum_{i\in[t]}\sum_{v\in S_i}d_{\mathcal{H}}(v). \notag
\end{align}
For every $i\in[t]$
since $G[S_i]$ is a complete graph,
it follows from Lemma~\ref{LEMMA:cancellative-3-graphs-clique} that
$\sum_{v\in S_i}d_{\mathcal{H}}(v) \le |\partial\mathcal{H}|$.
Therefore, $|\mathcal{H}|
\le |\mathcal{H}[R]| + t|\partial\mathcal{H}|$.
\end{proof}

%\begin{lemma}[\cite{LM19A}]\label{LEMMA:analysis-inequality}
%Suppose that $x\in [2/3,1]$ and $x'\in [0,1]$ are real numbers,
%$m\ge \omega \ge 1/(1-x)$ and
%$0\le e \le (m-\omega)(\omega-1)+\binom{\omega}{2}$ are integers,
%and $x' = (xm^2-2e)/(m-\omega)^2$ holds.
%Then
%\begin{align}
%\frac{(1-x)x}{6}m^3 + 3m^2 \ge \frac{(1-x')x'}{6}(m-\omega)^3 + 3(m-\omega)^2 + \frac{x}{2}m^2. \notag
%\end{align}
%\end{lemma}
%%%%%%%%%%%%%%%%%%%%%%%%%%%%%%%%%%%%%%%%%%%%%%%%%%%%%%%
\subsection{Proof of Theorem~\ref{THM:mian-cancellative-steiner-stability}}\label{SUBSEC:proof-stability-Steiner}
In this section we prove the following statement that implies Theorem~\ref{THM:mian-cancellative-steiner-stability}.

\begin{theorem}\label{THM:cancel-3-stability-steiner-real-statement}
Let $k\in 6\mathbb{N}+\{1,3\}$ and $k\ge 3$.
For every $\delta>0$ there exists an $\epsilon>0$ and $n_0$ such that the following holds for all $n\ge n_0$.
Suppose that $\mathcal{H}$ is a cancellative $3$-graph on $n$ vertices with
\begin{align}\label{equ:shaodw-and-edge-sizes-assumption}
|\partial\mathcal{H}|\ge (1-\epsilon)\frac{k-1}{2k}n^2
\quad{\rm and}\quad
|\mathcal{H}|\ge (1-\epsilon)\frac{k-1}{6k^2}n^3.
\end{align}
Then, $\mathcal{H}$ is $\mathcal{S}$-colorable for some $\mathcal{S}\in {\rm STS}(k)$ after removing at most $\delta n^3$ edges.
\end{theorem}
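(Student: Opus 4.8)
The plan is to use the two inequalities of Theorem~\ref{THM:LM-cancel-3-feasible-region} to pin down the edge counts, then to deduce that the shadow $G:=\partial\mathcal{H}$ is $o(n^2)$-close to the complete $k$-partite graph $T_2(n,k)$, and finally to read off a Steiner triple system from the resulting $k$-partition. First, normalization: writing $|\partial\mathcal{H}|=xn^2/2$, the hypothesis $(\ref{equ:shaodw-and-edge-sizes-assumption})$ gives $x\ge(1-\epsilon)\frac{k-1}{k}$, while $(\ref{equ:fesible-region-cancel-3-right})$ combined with the bound on $|\mathcal{H}|$ gives $x(1-x)\ge\frac{k-1}{k^2}-o(1)$; since $t\mapsto t(1-t)$ is decreasing on $[1/2,1]\ni\frac{k-1}{k}$ with value $\frac{k-1}{k^2}$ at $\frac{k-1}{k}$, these force $x=\frac{k-1}{k}+o(1)$ (all $o(1)$'s tending to $0$ as $\epsilon\to0$ and $n\to\infty$), whence $|\partial\mathcal{H}|=(1+o(1))t_2(n,k)$, $|\mathcal{H}|=(1+o(1))\frac{k-1}{6k^2}n^3$, $\sum_v d_G(v)=(1+o(1))\frac{k-1}{k}n^2$ and $\sum_v d_{\mathcal{H}}(v)=(1+o(1))\frac{k-1}{2k^2}n^3$, where $d_G$ denotes the degree in $G$.

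Next, the structural core. I want to delete an $o(n)$-set $W$ so that $G':=G[V\setminus W]$ is $K_{k+1}$-free with $\delta(G')>\frac{3k-4}{3k-1}|V(G')|$; then Theorem~\ref{THM:AES74} makes $G'$ $k$-partite, and $|G'|=(1+o(1))t_2(n,k)$ forces its classes $V_1,\dots,V_k$ to have sizes $(1+o(1))n/k$ and $G'$ to differ from the complete $k$-partite graph on them in only $o(n^2)$ pairs. Two elementary cancellative facts are the engine: (i) every link $L_{\mathcal{H}}(v)$ is triangle-free (a triangle $abc$ in it gives edges $vab,vbc,vca$ with $vab\triangle vbc=\{a,c\}\subset vca$, a $\mathcal{T}_3$); and (ii) if $vab,vac\in\mathcal{H}$ with $b\ne c$ then $\{b,c\}\notin\partial\mathcal{H}$ (else a third edge through $\{b,c\}$ completes a $\mathcal{T}_3$ on $vab,vac$), so every neighbourhood inside $L_{\mathcal{H}}(v)$ is $G$-independent. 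From (i), (ii) and the global shadow budget $\binom{n}{2}-|G|=(1+o(1))\frac{n^2}{2k}$ one bounds the pairs missing from $G$ that a high-$\mathcal{H}$-degree vertex ``sees'', forcing $d_G(v)=(1+o(1))\frac{k-1}{k}n$ and $d_{\mathcal{H}}(v)=(1+o(1))\frac{k-1}{2k^2}n^2$ for all but $o(n)$ vertices; the exceptional vertices form $W$. If $G[V\setminus W]$ still contained a $K_{k+1}$ on a set $Q$, Lemma~\ref{LEMMA:cancellative-3-graphs-clique} would give $\sum_{v\in Q}d_{\mathcal{H}}(v)\le|\partial\mathcal{H}|$, impossible since $(k+1)\cdot\frac{k-1}{2k^2}>\frac{k-1}{2k}$ and all $v\in Q$ are typical. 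To rule out linear-size cliques of $G$ (needed for the ``$o(n)$ atypical vertices'' claim) I would invoke Lemma~\ref{LEMMA:cancellative-clique-expansion}: a maximal clique expansion of $G$ with threshold $\gamma n$ has $t\le1/\gamma$ cliques and leaves all but $o(n^3)$ edges of $\mathcal{H}$ inside $\mathcal{H}[R]$, where $\omega(G[R])<\gamma n$ and $|\partial(\mathcal{H}[R])|\le|\partial\mathcal{H}|$ plugged into $(\ref{equ:fesible-region-cancel-3-right})$ applied to $\mathcal{H}[R]$ contradict Step~1 (iterating if needed) unless $\gamma$ already exceeds the true clique number. This passage from near-equality in $(\ref{equ:fesible-region-cancel-3-right})$ to $o(n^2)$-closeness of $G$ to $T_2(n,k)$ is the main obstacle: there is no a priori upper bound on $d_{\mathcal{H}}(v)$ (it can be $\sim n^2/4$), so the count of atypical vertices has to genuinely combine the link structure above with the shadow budget, and only then does Andr\'{a}sfai--Erd\H{o}s--S\'{o}s become applicable; extracting the clean dependence $\delta=O(\epsilon^{1/2})$ from this is the delicate point.

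Finally, read off the Steiner system. Put $\mathcal{H}':=\mathcal{H}[V\setminus W]$; since each $V_i$ is $G'$-independent and $\partial\mathcal{H}'\subset G'$, every edge of $\mathcal{H}'$ is a transversal of three distinct classes and has a well-defined projection in $\binom{[k]}{3}$. Delete the $o(n^3)$ edges meeting $W$ and those whose projection is realized only $o(n^3)$ times, and let $\mathcal{S}\subset\binom{[k]}{3}$ be the remaining set of heavily realized triples. By Observation~\ref{OBS:cancellative-pair-co-neighbor-is-independent} each $N_{\mathcal{H}}(uv)$ is $G$-independent, hence — since large independent sets of the nearly complete $k$-partite $G'$ lie essentially inside a single class — almost all edges of $\mathcal{H}'$ through a fixed pair share one projection; together with one more use of cancellativity (forbidding two edges of $\mathcal{S}$ that share a pair, via representatives in $\mathcal{H}'$ on distinct vertices) this makes $\mathcal{S}$ linear. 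A linear $3$-graph on $k$ vertices has at most $\binom{k}{2}/3$ edges, with equality only for a Steiner triple system, while $|\mathcal{H}'|=(1-o(1))\frac{k-1}{6k^2}n^3\le|\mathcal{S}|\cdot((1+o(1))n/k)^3$ gives $|\mathcal{S}|\ge(1-o(1))\binom{k}{2}/3$; hence $|\mathcal{S}|=\binom{k}{2}/3$ and $\mathcal{S}\in{\rm STS}(k)$. Mapping each $V_i$ (and $W$ arbitrarily) to the corresponding vertex of $\mathcal{S}$ shows that $\mathcal{H}$, after removal of the at most $\delta n^3$ deleted edges, is $\mathcal{S}$-colorable, which is exactly Theorem~\ref{THM:cancel-3-stability-steiner-real-statement}.
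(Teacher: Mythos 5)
Your overall skeleton (clean the shadow up to a $k$-partite graph via Andr\'asfai--Erd\H{o}s--S\'os, then read off a Steiner triple system) matches the paper, but the two steps that carry all the difficulty are not actually proved, and in both cases the sketched route would not go through as stated. First, the typicality claim -- that all but $o(n)$ vertices satisfy $d_{\mathcal{H}}(v)=(1+o(1))\tfrac{k-1}{2k^2}n^2$ and $d_{G}(v)=(1+o(1))\tfrac{k-1}{k}n$ -- is precisely what you flag as ``the main obstacle'' and never establish, and it is not an averaging fact: links are merely triangle-free, so a single vertex may have $d_{\mathcal{H}}(v)\approx n^2/4$, and the total edge count then gives no concentration of $\mathcal{H}$-degrees; on the $G$-side, the trivial count $2|G|\ge(1-\epsilon)\tfrac{k-1}{k}n^2$ only bounds the number of vertices below the AES threshold $\tfrac{3k-4}{3k-1}n$ by roughly $\left(1-\tfrac{1}{3k}\right)n$, which is vacuous. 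Worse, your proposed uses are circular: you want typicality to kill $K_{k+1}$'s via Lemma~\ref{LEMMA:cancellative-3-graphs-clique} (the inequality $(k+1)\tfrac{k-1}{2k^2}>\tfrac{k-1}{2k}$ is fine), but the only mechanism for bounding the low-$G$-degree set is Tur\'an's theorem applied to a graph already known to be $K_{k+1}$-free. The paper avoids per-vertex typicality at this stage altogether: in Lemma~\ref{LEMMA:cancellative-3-graph-stability-decompose-vertex-set} it bounds the clique number and then the total size of all cliques of size $\ge k+1$ in a maximal clique expansion (Lemma~\ref{LEMMA:cancellative-clique-expansion} together with the pair inequality of Theorem~\ref{thm-calcel-3-induction-shadow}, optimizing over $e_s$, $e$ and $t$), and only afterwards uses Tur\'an plus Theorem~\ref{THM:AES74}; these computations are the real content of that lemma and your one-line ``iterate the clique expansion'' remark does not substitute for them.

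Second, the extraction of the STS has a genuine gap at the linearity of $\mathcal{S}$. Two heavy projections $\{i,j,l\}$ and $\{i,j,m\}$ with vertex-disjoint representative edges do not form a member of $\mathcal{T}_3$ (the symmetric difference has six vertices), so ``one more use of cancellativity via representatives on distinct vertices'' rules out nothing. The usable constraint is Observation~\ref{OBS:cancellative-pair-co-neighbor-is-independent} against the $O(\epsilon n^2)$ missing-pair budget, but to exploit it you must exhibit pairs $uv\in V_i\times V_j$ with large codegree into \emph{both} $V_l$ and $V_m$; with a heavy-triple threshold $\eta n^3$ the pigeonhole over $|V_i||V_j|\approx n^2/k^2$ needs $\eta$ of order $1/k^2$, whereas your deletion budget (and the count giving $|\mathcal{S}|\ge(1-o(1))\binom{k}{2}/3$) forces $\eta$ far smaller, of order $\delta/k^3$, so the thresholds are incompatible and even then one overlapping pair certifies too few missing edges. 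The paper's resolution is the finer link analysis you skip: Claim~\ref{claim-cancel-3-N(uv)-concenrate} gives per-pair concentration of $N_{\mathcal{H}}(uv)$ in one class, Claim~\ref{claim-cancel-3-L(v)-disjoint-bipartite} upgrades this to a perfect matching $\psi_v$ on the classes for every typical vertex, and a double pigeonhole plus the auxiliary bipartite graph $M$ makes the matchings consistent across vertices, after which $\mathcal{S}=\{\{i,j,\psi_i(j)\}\}$ is linear by construction and your final counting (which is correct, as is your opening normalization mirroring Claim~\ref{CLAIM:shadow-H-upper-bound}) identifies it as a Steiner triple system. Without that consistency argument, different pairs inside the same $V_i\times V_j$ may project to different third classes and nothing forces $\mathcal{S}$ to be linear.
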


{\bf Remarks.}
\begin{itemize}
\item[(a)]
Our proof shows that $\delta = 20000k^6\epsilon^{1/2}$ is sufficient for Theorem~\ref{THM:cancel-3-stability-steiner-real-statement}.
\item[(b)]
It is easy to see from inequality $(\ref{equ:fesible-region-cancel-3-right})$ (also see Figure~\ref{fig:feasible-region-cancellative-3})
that if $\mathcal{H}$ satisfies $(\ref{equ:shaodw-and-edge-sizes-assumption})$,
then the Euclidean distance between $\left(d(\partial\mathcal{H}),d(\mathcal{H})\right)$ and
$\left(\frac{k-1}{k},\frac{k-1}{k^2}\right)$ is bounded by some constant $\zeta = \zeta(\epsilon)$ that is linear in $\epsilon$,
and vice versa (we omit the detailed calculations here).
\end{itemize}

The technical parts of the proof of Theorem~\ref{THM:mian-cancellative-steiner-stability}
are contained in proofs of Lemma~\ref{LEMMA:cancellative-3-graph-stability-decompose-vertex-set}
and Lemma~\ref{LEMMA:k-partite-cancellative-3-graph-stability}.
In Lemma~\ref{LEMMA:cancellative-3-graph-stability-decompose-vertex-set} we will
show that every $3$-graph $\mathcal{H}$ that satisfies assumptions in Theorem~\ref{THM:cancel-3-stability-steiner-real-statement}
contains a small set $U$ of vertices such that the induced subgraph of
$\partial\mathcal{H}$ on $V(\mathcal{H})\setminus U$ is $k$-partite.
In Lemma~\ref{LEMMA:k-partite-cancellative-3-graph-stability} we will show that
if a cancellative pair $(G,\mathcal{H})$ satisfies similar assumptions in Theorem~\ref{THM:cancel-3-stability-steiner-real-statement}
and $G$ is $k$-partite,
then $\mathcal{H}$ is $\mathcal{S}$-colorable for some $\mathcal{S}\in {\rm STS}(k)$ after removing very few number of edges.

\begin{lemma}\label{LEMMA:cancellative-3-graph-stability-decompose-vertex-set}
Let $k\in 6\mathbb{N}+\{1,3\}$ and $k\ge 3$.
There exists an absolute constant $c_{1} = c_{1}(k)>0$ such that
for every constant $\epsilon$ satisfying $0\le \epsilon \le c_1$
there exists $n_0 = n_{1}(k,\epsilon)$ such that the following holds for all $n\ge n_1$.
Suppose that $\mathcal{H}$ is a cancellative $3$-graph on $n$ vertices
that satisfies $(\ref{equ:shaodw-and-edge-sizes-assumption})$.
Then there exists a set $U\subset V(\mathcal{H})$ of size at most $130\epsilon k^4 n$ such that the induced subgraph
of $\partial\mathcal{H}$ on $V(\mathcal{H})\setminus U$ is $k$-partite.
\end{lemma}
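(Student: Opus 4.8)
The plan is to reduce the statement to the Andr\'{a}sfai--Erd\H{o}s--S\'{o}s theorem (Theorem~\ref{THM:AES74}): writing $G:=\partial\mathcal{H}$, it suffices to produce a set $U$ of size $O(\mathrm{poly}(k)\,\epsilon n)$ such that $G[V(\mathcal{H})\setminus U]$ is $K_{k+1}$-free and has minimum degree larger than $\tfrac{3k-4}{3k-1}$ times its number of vertices. As a preliminary I would first record that the hypotheses $(\ref{equ:shaodw-and-edge-sizes-assumption})$ pin the shadow density: feeding $|\mathcal{H}|\ge(1-\epsilon)\tfrac{k-1}{6k^2}n^3$ into inequality $(\ref{equ:fesible-region-cancel-3-right})$ and combining with the lower bound on $|\partial\mathcal{H}|$ forces $d(\partial\mathcal{H})$ to lie within $O(\epsilon)$ of $\tfrac{k-1}{k}$; in particular $|\partial\mathcal{H}|\le\tfrac{k-1}{2k}n^2+O(\epsilon n^2)$, which will be used throughout. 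Rewritten as $|\mathcal{H}|\le|\partial\mathcal{H}|^{3/2}/(3\sqrt3)$, inequality $(\ref{equ:fesible-region-cancel-3-left})$ will also be used to keep the shadow densities of induced sub-$3$-graphs under control.

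\textbf{Clean-up of $\mathcal{H}$.} Put $\theta=(1-\tfrac{1}{100k})\tfrac{k-1}{2k^2}n^2$ and repeatedly delete a vertex whose degree in the current $3$-graph is below $\theta$; let $U_1$ be the deleted set and $\mathcal{H}_1=\mathcal{H}[V(\mathcal{H})\setminus U_1]$, so $\delta(\mathcal{H}_1)\ge\theta$, $|\mathcal{H}_1|\ge|\mathcal{H}|-|U_1|\theta$, and $\partial\mathcal{H}_1\subseteq(\partial\mathcal{H})[V(\mathcal{H})\setminus U_1]$. The claim is that $|U_1|=O(k\,\epsilon n)$. To prove it, apply inequality $(\ref{equ:fesible-region-cancel-3-left})$ to $\mathcal{H}_1$ to lower bound $|\partial\mathcal{H}_1|$ in terms of $|\mathcal{H}_1|$; together with $|\partial\mathcal{H}_1|\le|\partial\mathcal{H}|\le\tfrac{k-1}{2k}n^2+O(\epsilon n^2)$ this shows that, while $|U_1|$ stays small, $d(\partial\mathcal{H}_1)$ remains within $O(\epsilon+|U_1|/n)$ of $\tfrac{k-1}{k}$ (the $|U_1|/n$ corrections from the two bounds essentially cancel). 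Then inequality $(\ref{equ:fesible-region-cancel-3-right})$ applied to $\mathcal{H}_1$, with this control on its shadow density, gives an upper bound for $|\mathcal{H}_1|$ that decreases in $|U_1|$ at a strictly faster rate than the lower bound $|\mathcal{H}|-|U_1|\theta$ does, the gap in these rates being precisely the $\tfrac{1}{100k}$ slack built into $\theta$; running this estimate as a bootstrap (so the shadow-density control stays valid) and comparing the two bounds yields $|U_1|=O(k\,\epsilon n)$.

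\textbf{$K_{k+1}$-freeness, second clean-up, and AES.} The pair $(G_1,\mathcal{H}_1)$ with $G_1:=(\partial\mathcal{H})[V(\mathcal{H})\setminus U_1]$ is cancellative. If $G_1$ contained a clique on a $(k+1)$-set $S$, then Lemma~\ref{LEMMA:cancellative-3-graphs-clique} would give $(k+1)\theta\le\sum_{v\in S}d_{\mathcal{H}_1}(v)\le|\partial\mathcal{H}_1|\le\tfrac{k-1}{2k}n^2+O(\epsilon n^2)$; since $\theta\sim\tfrac{k-1}{2k^2}n^2$ this amounts to $\tfrac{k+1}{k}\le1+O(\epsilon)$, false once $\epsilon\le c_1(k)$ is small enough (depending on $k$). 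So $G_1$ is $K_{k+1}$-free, hence $|G_1|\le\tfrac{k-1}{2k}|V(G_1)|^2$ by Tur\'{a}n's theorem, and since $|G_1|\ge|\partial\mathcal{H}|-|U_1|n\ge\tfrac{k-1}{2k}n^2(1-O(k\epsilon))$ it is within $O(k\epsilon n^2)$ of the Tur\'{a}n bound. For such a near-extremal $K_{k+1}$-free graph a standard clean-up works directly: with $\gamma=\tfrac{1}{2k(3k-1)}$, repeatedly delete a vertex whose current $G_1$-degree is below $(\tfrac{k-1}{k}-\gamma)$ times the current order; if $U_2$ is deleted, then removing $m$ vertices destroys at most $m(\tfrac{k-1}{k}-\gamma)|V(G_1)|$ edges while the surviving graph, still $K_{k+1}$-free, has at most $\tfrac{k-1}{2k}(|V(G_1)|-m)^2$ edges, and comparing these with the near-extremality from above bounds $|U_2|=O(\mathrm{poly}(k)\,\epsilon n)$. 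Then $G_2:=G_1[V(G_1)\setminus U_2]$ is $K_{k+1}$-free with minimum degree greater than $(\tfrac{k-1}{k}-\gamma)|V(G_2)|>\tfrac{3k-4}{3k-1}|V(G_2)|$, so Theorem~\ref{THM:AES74} makes $G_2$ $k$-partite. Taking $U=U_1\cup U_2$, we get that $G[V(\mathcal{H})\setminus U]=G_2$ is $k$-partite and $|U|=O(\mathrm{poly}(k)\,\epsilon n)$; tracking constants through the two clean-ups gives the bound $|U|\le130\epsilon k^4 n$, and only $\epsilon\le c_1(k)$ and $n\ge n_1(k,\epsilon)$ were used.

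The hard part is the first clean-up, i.e.\ bounding $|U_1|$ \emph{linearly} in $\epsilon$. The naive ``delete low-degree vertices and count edges'' argument fails here, because a vertex of a cancellative $3$-graph can have degree far above the average (up to $\sim n^2/4$, since its link is triangle-free), leaving room a priori for many low-degree vertices; what makes it work is playing inequalities $(\ref{equ:fesible-region-cancel-3-left})$ and $(\ref{equ:fesible-region-cancel-3-right})$ against each other on the shrinking sub-$3$-graph — using $(\ref{equ:fesible-region-cancel-3-left})$ to force its shadow density to stay near $\tfrac{k-1}{k}$ and then $(\ref{equ:fesible-region-cancel-3-right})$ to make its edge-count upper bound decay in $|U_1|$ faster than the trivial lower bound. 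Matching these two decay rates quantitatively, which is governed by the small slack deliberately put into the degree threshold $\theta$, is the delicate point; everything afterwards is Tur\'{a}n-type bookkeeping plus the Andr\'{a}sfai--Erd\H{o}s--S\'{o}s theorem.
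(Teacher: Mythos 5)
Your reduction to Theorem~\ref{THM:AES74} in the final step, and your use of Lemma~\ref{LEMMA:cancellative-3-graphs-clique} to rule out a $K_{k+1}$ in the surviving shadow once a minimum $\mathcal{H}$-degree of roughly $\frac{k-1}{2k^2}n^2$ is available, are both sound and in the same spirit as the paper's endgame. But the whole proposal hinges on the first clean-up, namely that iteratively deleting vertices of $\mathcal{H}$-degree below $\theta=(1-\frac{1}{100k})\frac{k-1}{2k^2}n^2$ removes only $O(k\epsilon n)$ vertices, and the mechanism you give for this does not work. You claim that inequality~(\ref{equ:fesible-region-cancel-3-left}) applied to the shrinking $3$-graph $\mathcal{H}_1$ keeps $d(\partial\mathcal{H}_1)$ within $O(\epsilon+|U_1|/n)$ of $\frac{k-1}{k}$. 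That is quantitatively false for every $k\ge 7$: with $|\mathcal{H}_1|\approx\frac{k-1}{6k^2}m^3$, inequality~(\ref{equ:fesible-region-cancel-3-left}) only forces the shadow density to be at least $6\bigl(\frac{k-1}{6k^2}\bigr)^{2/3}$, and $6\bigl(\frac{k-1}{6k^2}\bigr)^{2/3}\ge\frac{k-1}{k}$ holds exactly when $k(k-1)\le 6$, i.e.\ only at $k=3$ (for $k=7$ the bound is about $0.45$ versus the required $6/7$; the point $\bigl(\frac{k-1}{k},\frac{k-1}{k^2}\bigr)$ lies far below the curve $y=x^{3/2}/\sqrt{6}$, cf.\ Figure~\ref{fig:feasible-region-cancellative-3}). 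Since the bound in~(\ref{equ:fesible-region-cancel-3-right}) is decreasing in $x$ only for $x\ge 1/2$ and is much larger at $x\approx 0.45$ than at $x\approx\frac{k-1}{k}$, the upper bound on $|\mathcal{H}_1|$ you get this way does not decay in $|U_1|$ faster than the trivial lower bound $|\mathcal{H}|-|U_1|\theta$, and the bootstrap never closes.

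The obvious repair, lower-bounding the surviving shadow by $|\partial\mathcal{H}|-|U_1|n$ (or working with the cancellative pair $\bigl((\partial\mathcal{H})[V\setminus U_1],\mathcal{H}_1\bigr)$ and Theorem~\ref{thm-calcel-3-induction-shadow}), also fails the rate comparison: each deleted vertex may cost up to $n$ shadow edges, which lowers the usable density by about $\frac{2|U_1|}{kn}$ and raises the~(\ref{equ:fesible-region-cancel-3-right})-type upper bound by about $\frac{k-2}{3k^2}|U_1|n^2$, which swamps the per-vertex slack $\frac{1}{100k}\cdot\frac{k-1}{2k^2}n^2$ you built into $\theta$ (the net decay rate of the upper bound is about $\frac{k+1}{6k^2}n^2$ per vertex, slower than $\theta$). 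So as written the key claim $|U_1|=O(k\epsilon n)$ is unproved, and neither of the two natural instantiations of your ``play (\ref{equ:fesible-region-cancel-3-left}) against (\ref{equ:fesible-region-cancel-3-right})'' idea can establish it for $k\ge 7$. The paper sidesteps exactly this difficulty by never deleting vertices on the basis of low $\mathcal{H}$-degree: it deletes only vertices lying in cliques of $\partial\mathcal{H}$ of size at least $k+1$, so that Lemma~\ref{LEMMA:cancellative-3-graphs-clique} caps the edges lost per clique at $|\partial\mathcal{H}|$, i.e.\ at most roughly $\frac{k-1}{2k(k+1)}n^2$ per deleted vertex, strictly below $\frac{k-1}{2k^2}n^2$; the shadow of the remainder is then controlled through the clique-expansion count of Observation~\ref{OBS:max-clique-expansion}(b), and the quantitative work is done by optimizing over $e_s$, $E_t$, $t$, $e$ in Claims~\ref{CLAIM:clique-number-shadow-H-upper-bound} and~\ref{claim-stability-sum-Si-up-bound}, before the Tur\'{a}n-plus-AES finish that your last paragraph correctly describes. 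To salvage your write-up you would need either a genuinely different argument for the min-degree clean-up (it is not even clear the statement is true, since links are merely triangle-free and single vertices can have degree up to $n^2/4$, so averaging alone does not bound the number of low-degree vertices) or to replace the first step by a clique-based deletion of the paper's type.
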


\begin{lemma}\label{LEMMA:k-partite-cancellative-3-graph-stability}
Let $k\in 6\mathbb{N}+\{1,3\}$ and $k\ge 3$.
There exists an absolute constant $c_{2} = c_{2}(k)>0$ such that
for every constant $\epsilon'$ satisfying $0\le \epsilon' \le c_2$
there exists $n_2 = n_{2}(k,\epsilon')$ such that the following holds for all $n\ge n_2$.
Suppose that $(G,\mathcal{H})$ is a cancellative pair on a set $V$ of size $n$, $G$ is $k$-partite,
\begin{align}\label{equ:cancellative-pair-inequalities}
|G|\ge (1-\epsilon')\frac{k-1}{2k}n^2
\quad{\rm and}\quad
|\mathcal{H}|\ge (1-\epsilon')\frac{k-1}{6k^2}n^3.
\end{align}
Then, $\mathcal{H}$ is $\mathcal{S}$-colorable for some $\mathcal{S}\in {\rm STS}(k)$
after removing at most $600(\epsilon')^{1/2}k^3 n^3$ edges.
\end{lemma}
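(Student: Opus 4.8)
The plan is to exploit that the hypotheses force $G$ to be an almost-complete balanced $k$-partite graph, and then to read off the Steiner structure of $\mathcal{H}$ from the single-part behaviour of co-neighbourhoods guaranteed by Observation~\ref{OBS:cancellative-pair-co-neighbor-is-independent}.

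First I would set up a robust partition. Write $V_1,\dots ,V_k$ for the parts of $G$. Since $|G|\le\sum_{i<j}|V_i||V_j|$, which is maximized (at $\tfrac{k-1}{2k}n^2$) exactly when the parts are balanced, the assumption $|G|\ge(1-\epsilon')\tfrac{k-1}{2k}n^2$ gives $\sum_i(|V_i|-n/k)^2=O(\epsilon' n^2)$, so every part has size $(1\pm O(k\sqrt{\epsilon'}))\,n/k$. Next I clean $G$: call $v$ exceptional if it has at least $n/(2k^2)$ non-neighbours outside its own part; as $G$ misses at most $\epsilon' n^2/2$ cross-pairs there are $O(\epsilon' k^2)n$ exceptional vertices, and deleting them costs at most $O(\epsilon' k^2)n^3\le\sqrt{\epsilon'}k^3n^3$ edges of $\mathcal{H}$ (using $\Delta(\mathcal{H})\le|\partial\mathcal{H}|<n^2$) while preserving the hypotheses with $\epsilon'$ replaced by $O(k^4\epsilon')$. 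After this every vertex has at most $n/(2k^2)$ non-neighbours in each other part, so an independent set of $G$ that meets two parts has size less than $n/(2k)$; hence by Observation~\ref{OBS:cancellative-pair-co-neighbor-is-independent} every $uv\in\partial\mathcal{H}$ has $N_{\mathcal{H}}(uv)$ either of size $<n/(2k)$ or contained in a single part, and since $\partial\mathcal{H}\subseteq G$ every edge of $\mathcal{H}$ meets three distinct parts.

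Second, I would prove a sharp degree bound. For $u\in V_i$ the link $L_{\mathcal{H}}(u)$ lives on $\bigcup_{a\ne i}V_a$, and by the above $|N_{L_{\mathcal{H}}(u)}(v)|=|N_{\mathcal{H}}(uv)|\le\max_c|V_c|$ for every $v$; summing $L_{\mathcal{H}}(u)$-degrees part by part and halving gives $d_{\mathcal{H}}(u)\le\tfrac{k-1}{2}(\max_c|V_c|)^2$, whence $|\mathcal{H}|=\tfrac13\sum_u d_{\mathcal{H}}(u)\le\tfrac{k-1}{6k^2}n^3(1+O(k\sqrt{\epsilon'}))$. Combined with the lower bound in $(\ref{equ:cancellative-pair-inequalities})$ this shows both estimates are tight up to $O(\sqrt{\epsilon'})n^3$; unwinding, the total codegree deficit $\sum_{uv\in\partial\mathcal{H}}(\max_c|V_c|-|N_{\mathcal{H}}(uv)|)$ is $O(\sqrt{\epsilon'})n^3$, so all but $O(\sqrt{\epsilon'}k^3)n^2$ pairs $uv\in\partial\mathcal{H}$ are \emph{excellent}, meaning $N_{\mathcal{H}}(uv)$ fills a single part $\pi(uv)$ up to an additive $o(n/k)$. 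The main structural step is then: for a typical $u\in V_i$ and each $a\ne i$, all excellent pairs $uv$ with $v\in V_a$ share a common value $\pi(uv)=:M_u(a)$. Indeed, fixing one such pair with $\pi(uv)=c$, for almost every $x\in V_c$ the triple $\{u,v,x\}$ is an edge, so $ux$ is excellent with $\pi(ux)=a$ and fills $V_a$ almost completely; hence $v'\in N_{\mathcal{H}}(ux)$ for almost every $v'\in V_a$, and if some such $uv'$ were excellent with $\pi(uv')\ne c$ this would force $x\in N_{\mathcal{H}}(uv')\subseteq V_{\pi(uv')}$, a contradiction. A symmetric argument makes $M_u$ a perfect matching on $[k]\setminus\{i\}$ (possible since $k$ is odd), and comparing the three vertices of a typical edge shows $M_u$ is the same matching $M_i$ for almost all $u\in V_i$ and that the $M_i$ glue consistently; thus $\mathcal{S}:=\{\{i,a,M_i(a)\}:i\ne a\}$ is a linear triple system on $[k]$ covering every pair, i.e.\ $\mathcal{S}\in{\rm STS}(k)$ (consistent with $k\in 6\mathbb{N}+\{1,3\}$).

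Finally, let $\phi$ send each vertex to the index of its part. If $\{u,v,w\}$ is an edge with $u$ non-exceptional of typical degree and $uv$ excellent, then $w\in N_{\mathcal{H}}(uv)\subseteq V_{\pi(uv)}$, so $\phi(\{u,v,w\})=\{i(u),i(v),M_{i(u)}(i(v))\}\in\mathcal{S}$; the only edges $\phi$ fails to $\mathcal{S}$-colour are those meeting a deleted/atypical vertex or an inexcellent pair, and the displayed bounds sum to at most $600(\epsilon')^{1/2}k^3n^3$. The hard part is precisely the structural step: controlling a near-extremal link up to only a codegree-deficit error forces the number of misbehaving edges down to a ${\rm poly}(1/k)$-fraction of $n^3$ rather rather easily, but getting it down to the $(\epsilon')^{1/2}$ scale requires a sharper global double count of codegree deficits (or a bootstrapping step feeding the approximate matching structure back into the cancellative condition), and this is where the bulk of the work in Lemma~\ref{LEMMA:k-partite-cancellative-3-graph-stability} lies.
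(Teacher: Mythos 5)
Your overall strategy is the same as the paper's: near-balanced parts, codegree at most roughly $n/k$ and concentrated in a single part (via Observation~\ref{OBS:cancellative-pair-co-neighbor-is-independent} plus the bound on missing cross-pairs), near-tightness of the degree count, a per-vertex perfect matching of the parts, gluing into some $\mathcal{S}\in{\rm STS}(k)$, and colouring by the part map. The genuine gap is the one you yourself flag in your last paragraph: the quantitative structural step is not carried out, and your bookkeeping does not reach the $(\epsilon')^{1/2}$ scale the lemma asserts. If you make ``excellent'' precise with a codegree-deficit threshold $\theta n$, the global deficit bound gives about $\sqrt{\epsilon'}\,n^2/\theta$ inexcellent pairs, each worth up to about $n/k$ edges, while your covering argument (fix one excellent pair $uv$ with $\pi(uv)=c$, sweep over $x\in V_c$) only forces agreement of $\pi(uv')$ for all but roughly $\theta n$ vertices $v'$ of $V_a$, again each worth up to $n/k$ edges; optimizing $\theta$ gives an error of order $(\epsilon')^{1/4}n^3$, not $(\epsilon')^{1/2}n^3$. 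So as written the proposal proves at best a weaker statement and defers precisely the part where the work lies. Two smaller points: the inference ``so $ux$ is excellent with $\pi(ux)=a$'' is not automatic (excellence of $ux$ must be arranged by typicality of $u$, and the identification of the part needs the full single-part containment that your cleaning step supplies, since one vertex $v\in N_{\mathcal{H}}(ux)\cap V_a$ does not by itself locate the concentration part); and the consistency of the matchings $M_i$ into a Steiner triple system is asserted rather than argued.

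For comparison, the paper closes this gap with majority-threshold double counts rather than deficit thresholds, which is where the $\sqrt{\epsilon'}$ comes from (via Claim~\ref{claim-Vi-up-low-bound}, part sizes are within $2(\epsilon')^{1/2}n$ of $n/k$). In Claim~\ref{claim-cancel-3-L(v)-disjoint-bipartite} one fixes a high-degree vertex $v$, extracts by pigeonhole a constant-fraction subset $U_i\subset V_i$ with a common target part $\psi(i)$, and counts the bipartite link graph between $U_i$ and $V_{\psi(i)}$: since every vertex of $U_i$ has link degree $n/k-O((\epsilon')^{1/2}k^2)n$ and $|V_{\psi(i)}|\le n/k+2(\epsilon')^{1/2}n$, all but $O((\epsilon')^{1/2}k^2)n$ vertices of $V_{\psi(i)}$ see at least half of $U_i$, and Claim~\ref{claim-cancel-3-N(uv)-concenrate} upgrades ``at least $n/(3k(k-2))$ co-neighbours in $V_i$'' to ``all but $O(\epsilon' k^2)n$ co-neighbours in $V_i$''. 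The same two-step device is then applied globally: a set $W_1\subset V_1$ of size only $n/(2\,k!)$ with a common matching (no claim that almost all of $V_1$ shares it), an auxiliary bipartite graph between $W_1$ and the pairs in matched parts, a half-degree threshold to isolate $O((\epsilon')^{1/2}k^3)n^2$ bad pairs, and Claim~\ref{claim-cancel-3-N(uv)-concenrate} again to force the co-neighbourhood of every good pair into $V_1$ up to $O(\epsilon' k!\,n)$ vertices. This is the ``sharper global double count'' you say is needed; without it (or an equivalent bootstrapping argument, spelled out), the proposal does not yield the stated bound of $600(\epsilon')^{1/2}k^3n^3$.
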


First let us show that Lemma~\ref{LEMMA:cancellative-3-graph-stability-decompose-vertex-set}
and Lemma~\ref{LEMMA:k-partite-cancellative-3-graph-stability} imply Theorem~\ref{THM:cancel-3-stability-steiner-real-statement}.

\begin{proof}[Proof of Theorem~\ref{THM:cancel-3-stability-steiner-real-statement} assuming Lemmas~\ref{LEMMA:cancellative-3-graph-stability-decompose-vertex-set} and~\ref{LEMMA:k-partite-cancellative-3-graph-stability}]
Let $\epsilon> 0$ be a sufficiently small constant such that $\epsilon' = 800\epsilon k^6 n$ satisfies $\epsilon' \le c_2$.
Let $\delta = 20000\epsilon^{1/2}k^6$.
Suppose that $\mathcal{H}$ is a cancellative $3$-graph on a set $V$ of $n$ vertices and $\mathcal{H}$ satisfies assumptions in
Theorem~\ref{THM:cancel-3-stability-steiner-real-statement}.
By Lemma~\ref{LEMMA:cancellative-3-graph-stability-decompose-vertex-set},
there exists a set $U\subset V$ of size at most $130\epsilon k^4 n$ such that the induced subgraph of $\partial\mathcal{H}$
on $V\setminus U$ is $k$-partite.
Let $G' = (\partial\mathcal{H})[V\setminus U]$ and $\mathcal{H}' = \mathcal{H}[V\setminus U]$.
Then it is easy to see that $(G',\mathcal{H}')$ is a cancellative pair, and moreover,
\begin{align}
|G'|
\ge |\partial\mathcal{H}| - |U|\cdot n
\ge (1-\epsilon)\frac{k-1}{2k}n^2 - 130\epsilon k^4 n^2
\ge (1-\epsilon')\frac{k-1}{2k}n^2, \notag
\end{align}
and
\begin{align}
|\mathcal{H}'|
\ge |\mathcal{H}| - |U|\cdot n^2
\ge (1-\epsilon)\frac{k-1}{6k^2}n^3 - 130\epsilon k^4 n^3
\ge (1-\epsilon')\frac{k-1}{6k^2}n^3. \notag
\end{align}
Therefore, by Lemma~\ref{LEMMA:k-partite-cancellative-3-graph-stability},
$\mathcal{H}'$ contains subgraph $\mathcal{H}''$ of size at least $|\mathcal{H}'| - 600(\epsilon')^{1/2}k^3 n^3$
such that $\mathcal{H}''$ is $\mathcal{S}$-colorable for some $\mathcal{S}\in {\rm STS}(k)$.
Note that $|\mathcal{H}| - |\mathcal{H}''| \le 600(\epsilon')^{1/2}k^3 n^3 + 130\epsilon k^4 n^3 < 20000\epsilon^{1/2}k^6 n^3$.
This completes the proof of Theorem~\ref{THM:cancel-3-stability-steiner-real-statement}.
\end{proof}

%%%%%%%%%%%%%%%%%%%%%%%%%%%%%%%%%%%
\subsection{Proof of Lemma~\ref{LEMMA:cancellative-3-graph-stability-decompose-vertex-set}}\label{SUBSEC:proof-first-lemma}
\begin{proof}[Proof of Lemma~\ref{LEMMA:cancellative-3-graph-stability-decompose-vertex-set}]
Fix $k\in 6\mathbb{N}+\{1,3\}$ and $k\ge 3$.
Let $\epsilon > 0$ be a sufficiently small constant and $n$ be a sufficiently large integer.
Let $\mathcal{H}$ be a cancellative $3$-graph on $n$ vertices and assume that $\mathcal{H}$ satisfies assumptions in Lemma~\ref{LEMMA:cancellative-3-graph-stability-decompose-vertex-set}.

\begin{claim}\label{CLAIM:shadow-H-upper-bound}
We have $|\partial\mathcal{H}|\le \left(\frac{k-1}{2k}+\epsilon\right)n^2$.
\end{claim}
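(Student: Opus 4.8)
The plan is to obtain the desired upper bound on $|\partial\mathcal{H}|$ directly from the assumed lower bound on $|\mathcal{H}|$, using inequality~(\ref{equ:fesible-region-cancel-3-right}) of Theorem~\ref{THM:LM-cancel-3-feasible-region} together with the elementary geometry of the parabola $x\mapsto x(1-x)$.

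First I would write $|\partial\mathcal{H}|=xn^2/2$ with $x\in[0,1]$. Inequality~(\ref{equ:fesible-region-cancel-3-right}) gives $|\mathcal{H}|\le\frac{x(1-x)}{6}n^3+3n^2$, while hypothesis~(\ref{equ:shaodw-and-edge-sizes-assumption}) gives $|\mathcal{H}|\ge(1-\epsilon)\frac{k-1}{6k^2}n^3$; combining these and multiplying through by $6/n^3$ yields $x(1-x)\ge(1-\epsilon)\frac{k-1}{k^2}-\frac{18}{n}$. Next I would use that, since $k\ge 3$, the number $\frac{k-1}{k}$ exceeds $1/2$, so $x\mapsto x(1-x)$ is strictly decreasing to the right of $\frac{k-1}{k}$, and that $\frac{k-1}{k}$ is precisely the larger root of $x(1-x)=\frac{k-1}{k^2}$. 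A short computation (using $\frac{k-2}{k}\ge 1/3$ and $\frac{k-1}{k^2}<1/3$, and absorbing the $18/n$ term since $n$ is large) then shows that $x>\frac{k-1}{k}+2\epsilon$ would force $x(1-x)<\frac{k-1}{k^2}-\frac{2\epsilon}{3}$, whereas the displayed lower bound gives $x(1-x)>\frac{k-1}{k^2}-\frac{\epsilon}{3}-\frac{18}{n}$, a contradiction for $n$ sufficiently large. Hence $x\le\frac{k-1}{k}+2\epsilon$, i.e.\ $|\partial\mathcal{H}|=xn^2/2\le\bigl(\frac{k-1}{2k}+\epsilon\bigr)n^2$.

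There is essentially no substantive obstacle here; the only point requiring care is that the additive error $3n^2$ in~(\ref{equ:fesible-region-cancel-3-right}) (equivalently the $18/n$ after rescaling) be negligible against the $\epsilon$-gap, which is why $n$ is taken large relative to $1/\epsilon$. Conceptually, the claim just records the fact that the assumed edge count, together with this Kruskal--Katona-type bound, pins $x$ into (essentially) the interval $[1/k,(k-1)/k]$, and only its right endpoint is needed here.
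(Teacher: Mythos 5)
Your proof is correct and follows essentially the same route as the paper: both combine the Kruskal--Katona-type bound $|\mathcal{H}|\le \frac{x(1-x)}{6}n^3+3n^2$ (the paper invokes it via Theorem~\ref{thm-calcel-3-induction-shadow}, you via inequality~(\ref{equ:fesible-region-cancel-3-right}), which is the same bound) with the assumed lower bound on $|\mathcal{H}|$, and use the monotonicity of $x(1-x)$ past $1/2$ to rule out $x>\frac{k-1}{k}+2\epsilon$, absorbing the $O(n^2)$ error for large $n$. No gaps.
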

\begin{proof}[Proof of Claim~\ref{CLAIM:shadow-H-upper-bound}]
Let $x = 2|\partial\mathcal{H}|/n^2$ and suppose to the contrary that
$x \ge \left((k-1)/k+2\epsilon\right)$.
Then it follows from Theorem~\ref{thm-calcel-3-induction-shadow} that
\begin{align}
|\mathcal{H}|
\le \frac{(1-x)x}{6}n^3 + 3n^2
& \le \frac{1}{6}\left(\frac{k-1}{k}+2\epsilon\right)\left(\frac{1}{k}-2\epsilon\right)n^3 + 3n^2 \notag\\
& \le \frac{k-1}{6k^2}\left(1-\frac{2k(k-2)}{k-1}\epsilon\right) n^3 + 3n^2
 < \left(1-\epsilon\right)\frac{k-1}{6k^2}, \notag
\end{align}
which contradicts $(\ref{equ:shaodw-and-edge-sizes-assumption})$.
\end{proof}%CLAIM

\begin{claim}\label{CLAIM:clique-number-shadow-H-upper-bound}
The clique number $\omega\left(\partial\mathcal{H}\right)$ of $\partial\mathcal{H}$
satisfies $\omega\left(\partial\mathcal{H}\right) \le 10k \epsilon n$.
\end{claim}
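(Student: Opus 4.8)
The plan is to argue by contradiction. Suppose $\partial\mathcal{H}$ contains a clique $S$ of maximum size, $|S|=s=\omega(\partial\mathcal{H})$, and write $\sigma=s/n$, $m=n-s$. Assuming $\sigma>10k\epsilon$, I will contradict $(\ref{equ:shaodw-and-edge-sizes-assumption})$ once $n$ is large. Two facts are used: a clique of $\partial\mathcal{H}$ carries few edges of $\mathcal{H}$, and it forces the shadow inside $V\setminus S$ to be essentially as dense as in a balanced blowup of a Steiner system.

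First, Lemma~\ref{LEMMA:cancellative-3-graphs-clique} gives $\sum_{v\in S}d_{\mathcal{H}}(v)\le|\partial\mathcal{H}|$, so at most $|\partial\mathcal{H}|$ edges of $\mathcal{H}$ meet $S$ and therefore
\[
|\mathcal{H}[V\setminus S]|\ge|\mathcal{H}|-|\partial\mathcal{H}|\ge(1-\epsilon)\tfrac{k-1}{6k^2}n^3-O(n^2),
\]
using $(\ref{equ:shaodw-and-edge-sizes-assumption})$ and Claim~\ref{CLAIM:shadow-H-upper-bound}. Second, the pair $(G',\mathcal{H}')$ with $G'=(\partial\mathcal{H})[V\setminus S]$ and $\mathcal{H}'=\mathcal{H}[V\setminus S]$ is a cancellative pair on $m$ vertices; set $x'=2|G'|/m^2$. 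Because $S$ is a clique, $G'$ arises from $\partial\mathcal{H}$ by deleting exactly the $\binom{s}{2}$ pairs inside $S$ together with the pairs meeting both $S$ and $V\setminus S$, and there are at most $s(n-s)$ of the latter; hence $|G'|\ge|\partial\mathcal{H}|-\binom{s}{2}-s(n-s)$, and plugging in $|\partial\mathcal{H}|\ge(1-\epsilon)\tfrac{k-1}{2k}n^2$ and simplifying yields $x'\ge\underline{x}'-o(1)$, where $\underline{x}':=1-\frac{c_\epsilon}{(1-\sigma)^2}$ and $c_\epsilon:=\tfrac1k+\epsilon\tfrac{k-1}{k}$. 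In particular $x'>\tfrac12$ once $\epsilon$ and $\sigma$ are small.

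Applying Theorem~\ref{thm-calcel-3-induction-shadow} to $(G',\mathcal{H}')$ gives $|\mathcal{H}'|\le\frac{x'(1-x')}{6}m^3+3m^2$, which combined with the first display forces $x'(1-x')\ge\frac{(1-\epsilon)(k-1)/k^2}{(1-\sigma)^3}-o(1)$. Since $t\mapsto t(1-t)$ decreases on $(\tfrac12,1)$ and $x'\ge\underline{x}'>\tfrac12$, this in turn forces $\underline{x}'(1-\underline{x}')\ge\frac{(1-\epsilon)(k-1)/k^2}{(1-\sigma)^3}-o(1)$, which after clearing denominators becomes $\Phi(1-\sigma)\ge-o(1)$ for the upward parabola $\Phi(u)=c_\epsilon u^2-\frac{(1-\epsilon)(k-1)}{k^2}u-c_\epsilon^2$. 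A direct computation gives $\Phi(1)=\epsilon(1-\epsilon)\tfrac{(k-1)^2}{k^2}$ and $\Phi'(1)=\tfrac{k+1}{k^2}+O(\epsilon)>0$, so the unique positive root $u^*$ of $\Phi$ satisfies $1-u^*\le\tfrac{(k-1)^2}{k+1}\epsilon+o(1)$, and $\Phi(u)<0$ throughout $(0,u^*)$. Since $\tfrac{(k-1)^2}{k+1}<10k$, we get $1-u^*<10k\epsilon$ for $n$ large, so $\Phi(1-\sigma)<-o(1)$ whenever $\sigma>10k\epsilon$: a contradiction. Hence $\omega(\partial\mathcal{H})=s\le10k\epsilon n$.

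I expect the lower bound on $x'$ to be the crux: one must exploit that removing the clique deletes only $\binom{s}{2}$ intra-clique pairs and at most $s(n-s)$ cross pairs from the shadow — applying $(\ref{equ:fesible-region-cancel-3-left})$ or $(\ref{equ:fesible-region-cancel-3-right})$ to $\mathcal{H}[V\setminus S]$ together with its own (possibly much sparser) shadow would only yield $\omega(\partial\mathcal{H})=O(\sqrt{\epsilon}\,n)$ for $k=3$ and $O(n)$ for $k\ge7$. The remaining tasks are routine: expanding $\Phi$ and verifying the stated values of $\Phi(1)$ and $\Phi'(1)$, and separately handling $\sigma$ bounded away from $0$, where the bound $x'(1-x')\ge\frac{(1-\epsilon)(k-1)/k^2}{(1-\sigma)^3}$ already exceeds $\tfrac14$ once $(1-\sigma)^3<4(k-1)/k^2$ and so is outright impossible.
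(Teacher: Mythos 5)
Your proposal is correct and follows essentially the same route as the paper: argue by contradiction, use Lemma~\ref{LEMMA:cancellative-3-graphs-clique} to discard the edges meeting the clique, apply Theorem~\ref{thm-calcel-3-induction-shadow} to the cancellative pair induced on the complement, and show the resulting density constraint is infeasible when the clique exceeds $10k\epsilon n$. The only difference is bookkeeping — you lower-bound the complement's shadow density and locate the root of the quadratic $\Phi(1-\sigma)$, whereas the paper truncates the clique to size exactly $10k\epsilon n$ and substitutes worst-case values of $e_s$ and $e$ via monotonicity — and your deferred details (the overlap of the two cases, the uniformity of the $o(1)$ term, and ``for $n$ large'' which should read ``for $\epsilon$ small'') are indeed routine.
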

\begin{proof}[Proof of Claim~\ref{CLAIM:clique-number-shadow-H-upper-bound}]
Suppose to the contrary that there exists a set $S\subset V(\mathcal{H})$ of size
$\lceil 10k\epsilon n\rceil$ such the the induced subgraph of $\partial\mathcal{H}$
on $S$ is complete.
To keep the calculations simple, let us assume that $10k\epsilon n$ is an integer.
Let $a = 10k \epsilon$, $R = V(\mathcal{H})\setminus S$, and $e=|\partial\mathcal{H}|$.
Let $e_{s}$ be the number of edges in $\partial\mathcal{H}$ that have at least one vertex in $S$,
and set $x' = (xn^2-e_{s})/(n-an)^2$.
Notice that $e_{s} \le an^2$.

It follows from Lemma~\ref{LEMMA:cancellative-3-graphs-clique} and Theorem~\ref{thm-calcel-3-induction-shadow} that
\begin{align}\label{ineq-stability-3-H-up-bound-x'}
|\mathcal{H}|
& \le |\mathcal{H}[T]|+\sum_{v\in S}d(v) \notag\\
& \le \frac{(1-x')x'}{6}(1-a)^3n^3+3(1-a)^2n^2+ e \notag\\
& = \frac{((1-a)^2n^2-2(e-e_s))(e-e_s)}{3(1-a)n}+3(1-a)^2n^2+ e \notag\\
& = \frac{-2e_s^2+\left(4e-(1-a)^2n^2 \right)e_s + (1-a)^2 n^2 e-2e^2 }{3(1-a)n}+3(1-a)^2n^2+ e.
\end{align}
Since $-2e_s^2+\left(4e-(1-a)^2n^2 \right)e_s$ is increasing in $e_s$ when $e_s \le e-(1-a)^2n^2/4$ and
\begin{align}
e-\frac{(1-a)^2n^2}{4}
& > (1-\epsilon)\frac{(k-1)n^2}{2k}-\frac{(1-a)^2n^2}{4} \notag\\
& \ge (1-\epsilon)\frac{n^2}{3}-\frac{(1-a)^2n^2}{4}
> an^2, \notag
\end{align}
we may substitute $e_{s} = an^2$ into (\ref{ineq-stability-3-H-up-bound-x'}) and obtain
\begin{align}\label{ineq-stability-3-H-up-bound-2}
|\mathcal{H}|
\le \frac{-2e^2 +\left( (1+a)^2 n^2 +3(1-a)n \right) e - (1+a^2)an^4 }{3(1-a)n}+3(1-a)^2n^2.
\end{align}
Since $-2e^2 +\left( (1+a)^2 n^2 +3(1-a)n \right) e$ is decreasing in $e$ when $e\ge (1+a)^2n^2/4 + 3(1-a)n/4$
and
\begin{align}
\frac{(1+a)^2n^2}{4}+\frac{3(1-a)n}{4}
< \frac{n^2}{4}+\frac{n^2}{100}
< (1-\epsilon)\frac{n^2}{3}
< (1-\epsilon)\frac{(k-1)n^2}{2k}, \notag
\end{align}
we may substitute $e=(1-\epsilon)(k-1)n^2/(2k)$ into (\ref{ineq-stability-3-H-up-bound-2}) and obtain
\begin{align}
|\mathcal{H}|
& < \frac{((1-\epsilon)(k-1) -2ka)(1+ka^2+(k-1)\epsilon)}{6(1-a)k^2} n^3 + \left( (1-\epsilon)\frac{(k-1)}{2k}+ 3(1-a)^2\right) n^2 \notag\\
& \le \frac{\left( (k-1)(1-a) - ka \right) \left( 1+ k \epsilon \right)}{6(1-a)k^2}  n^3 + 4n^2 \notag\\
& \le \frac{k-1}{6k^2} n^3 + \frac{\epsilon}{6} n^3 - \frac{a}{6k}n^3 + 4n^2
 < \frac{k-1}{6k^2} n^3 - \epsilon n^3, \notag
\end{align}
which contradicts $(\ref{equ:shaodw-and-edge-sizes-assumption})$.
\end{proof}%CLAIM

\begin{claim}\label{claim-stability-sum-Si-up-bound}
Suppose that $(S_1,\ldots,S_{t},R)$ is a
maximal clique expansion of $\partial\mathcal{H}$ with $|S_t| \ge k+1$ for some positive integer $t$.
Then $\sum_{i\in[t]}|S_i| < 30k^2 \epsilon n$.
\end{claim}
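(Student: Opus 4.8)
The plan is to argue by contradiction, mimicking the proof of Claim~\ref{CLAIM:clique-number-shadow-H-upper-bound} but at the level of the entire clique expansion rather than a single clique. Suppose $s := \sum_{i\in[t]}|S_i| \ge 30k^2\epsilon n$, and set $R = V(\mathcal{H})\setminus(S_1\cup\cdots\cup S_t)$, $m = |R| = n-s$. Since $(S_1,\ldots,S_t,R)$ is a maximal clique expansion with all $|S_i|\ge k+1$ and threshold $k+1$, the graph $(\partial\mathcal{H})[R]$ has clique number at most $k$, hence is $K_{k+1}$-free, so $|(\partial\mathcal{H})[R]| \le \frac{k-1}{2k}m^2$ by Tur\'an's theorem. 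Also, by Claim~\ref{CLAIM:clique-number-shadow-H-upper-bound} each $|S_i|\le 10k\epsilon n$, and since $|S_i|\ge k+1$ we get $t\le s/(k+1)$.

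The core of the argument combines three ingredients. First, Lemma~\ref{LEMMA:cancellative-clique-expansion} (with the sharpening, read off from its proof, that $\sum_{v\in S_i}d_{\mathcal{H}}(v)\le|(\partial\mathcal{H})[V_i]|$ where $V_i = V(\mathcal{H})\setminus(S_1\cup\cdots\cup S_{i-1})$, and that deleting $S_i$ destroys at least $\binom{|S_i|}{2}$ shadow pairs) bounds the number of edges of $\mathcal{H}$ meeting $S_1\cup\cdots\cup S_t$ by essentially $t|\partial\mathcal{H}|$. Second, Theorem~\ref{thm-calcel-3-induction-shadow} applied to the cancellative pair $((\partial\mathcal{H})[R],\mathcal{H}[R])$ bounds $|\mathcal{H}[R]|$ in terms of $x' := 2|(\partial\mathcal{H})[R]|/m^2$, where the $K_{k+1}$-freeness forces $x'\le\frac{k-1}{k}$; moreover, writing $e_s$ for the number of shadow pairs meeting $S_1\cup\cdots\cup S_t$, the assumption $|\partial\mathcal{H}|\ge(1-\epsilon)\frac{k-1}{2k}n^2$ together with $|(\partial\mathcal{H})[R]| = |\partial\mathcal{H}| - e_s \le \frac{k-1}{2k}m^2$ forces $e_s$ to grow linearly in $s$. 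Third, the hypotheses $(\ref{equ:shaodw-and-edge-sizes-assumption})$ and the upper bound $|\partial\mathcal{H}|\le\bigl(\frac{k-1}{2k}+\epsilon\bigr)n^2$ from Claim~\ref{CLAIM:shadow-H-upper-bound}.

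Assembling these into one inequality for $|\mathcal{H}|$, I would then eliminate the two remaining free quantities — the number $e_s$ of shadow pairs meeting $S_1\cup\cdots\cup S_t$ (constrained by $|\partial\mathcal{H}| - \frac{k-1}{2k}m^2 \le e_s < sn$) and $|\partial\mathcal{H}|$ itself — by locating the worst case via monotonicity, exactly as in the optimization at the end of the proof of Claim~\ref{CLAIM:clique-number-shadow-H-upper-bound}; this reduces matters to substituting $|\partial\mathcal{H}| = (1-\epsilon)\frac{k-1}{2k}n^2$ and the extremal choice of $e_s$, and simplifying. The resulting bound on $|\mathcal{H}|$ should drop strictly below $(1-\epsilon)\frac{k-1}{6k^2}n^3$ once $s\ge 30k^2\epsilon n$, contradicting $(\ref{equ:shaodw-and-edge-sizes-assumption})$.

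The main obstacle is this final optimization. Unlike in Claim~\ref{CLAIM:clique-number-shadow-H-upper-bound}, where one clique contributes only an $O(n^2)$ error, here the clique-expansion term $t|\partial\mathcal{H}|$ has order $s\,n^2$, the same order as the saving one extracts from the Tur\'an bound on $(\partial\mathcal{H})[R]$, so no step can be wasteful. The leverage is that $|S_i|\ge k+1$ makes $t$ a factor roughly $k$ smaller than $s$, and that $(\partial\mathcal{H})[R]$ being simultaneously $K_{k+1}$-free and (for small $s$) almost as dense as $\partial\mathcal{H}$ pins $|(\partial\mathcal{H})[R]|$ near the Tur\'an value, which through Theorem~\ref{thm-calcel-3-induction-shadow} pins $|\mathcal{H}[R]|$ near $\frac{k-1}{6k^2}m^3$. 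I expect the delicate point to be purely the bookkeeping needed to land the explicit constant $30k^2$ — keeping track of the $3m^2$ error in Theorem~\ref{thm-calcel-3-induction-shadow}, the $\binom{|S_i|}{2}$ shadow losses, and the slack between $\epsilon n^3$ and $\epsilon\frac{k-1}{6k^2}n^3$ — rather than any conceptual difficulty.
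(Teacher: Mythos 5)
There is a genuine gap, and it is conceptual rather than bookkeeping: your optimization cannot produce a contradiction because you never use the maximality degree bound, Observation~\ref{OBS:max-clique-expansion}~(b), which is the decisive ingredient in the paper's argument. Write $s=\sum_i|S_i|$, $m=n-s$, $e=|\partial\mathcal{H}|$, and let $e_s$ be the number of shadow pairs meeting $S_1\cup\cdots\cup S_t$. Your assembled bound is essentially $|\mathcal{H}|\le \frac{(e-e_s)\left(m^2-2(e-e_s)\right)}{3m}+3m^2+te$ with $t\le s/(k+1)$. On the whole range you allow, $e-e_s$ stays well above $m^2/4$, so this bound is \emph{increasing} in $e_s$; hence the worst case in your constraint set $e-\frac{k-1}{2k}m^2\le e_s< sn$ sits at $e_s\approx sn$, and both of your Tur\'an-type inputs (the lower bound on $e_s$ and the upper bound $x'\le\frac{k-1}{k}$) constrain the harmless side. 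A first-order computation at $e_s=sn$, $t=s/(k+1)$, $e=\frac{k-1}{2k}n^2$ gives $|\mathcal{H}|\lesssim\frac{k-1}{6k^2}n^3+\frac{2k^2-5k-1}{6k^2(k+1)}\,sn^2+O(\epsilon n^3)$, and the coefficient of $sn^2$ is \emph{positive} for every $k\ge3$; so no contradiction with $(\ref{equ:shaodw-and-edge-sizes-assumption})$ follows, however carefully the errors are tracked. The paper instead uses that, by maximality, every vertex outside $S_1\cup\cdots\cup S_i$ has at most $|S_i|-1$ neighbours in $S_i$, whence $e_s\le\sum_i(|S_i|-1)n=(s-t)n\le\frac{k}{k+1}sn$; replacing $sn$ by $\frac{k}{k+1}sn$ flips the coefficient above to $-\frac{1}{6k^2}$, and this linear-in-$s$ saving is exactly what beats the $O(\epsilon n^3)$ error once $s\ge30k^2\epsilon n$. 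Note also that Tur\'an's theorem on $(\partial\mathcal{H})[R]$ plays no role in the paper's proof of this claim, and your appeal to it is not even licensed by the hypothesis: the claim assumes only a maximal expansion with $|S_t|\ge k+1$, not threshold $k+1$ (this particular point is repairable by extending the expansion, but the bound it gives is in the useless direction anyway).

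Two secondary issues. First, the paper does not run the computation on the full expansion: using Claim~\ref{CLAIM:clique-number-shadow-H-upper-bound} (each $|S_i|<10k\epsilon n$) it truncates to a prefix of total size about $20k^2\epsilon n$ and derives the contradiction for that prefix; keeping $s=O(k^2\epsilon n)$ is what justifies the three monotone substitutions (in $e_s$, in $t$, in $e$) and the final comparison, whereas your plan works with $s$ possibly of order $n$ and would need additional care even after the main gap is repaired. Second, your ``sharpening'' of Lemma~\ref{LEMMA:cancellative-clique-expansion} should be stated with degrees taken in $\mathcal{H}[V\setminus(S_1\cup\cdots\cup S_{i-1})]$ rather than full degrees $d_{\mathcal{H}}(v)$ if you want the right-hand side $|(\partial\mathcal{H})[V\setminus(S_1\cup\cdots\cup S_{i-1})]|$ to come from Lemma~\ref{LEMMA:cancellative-3-graphs-clique}; in any case the gain over the plain bound $te$ is only of order $s^2n$ and does not affect the sign of the critical coefficient.
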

\begin{proof}[Proof of Claim~\ref{claim-stability-sum-Si-up-bound}]
Suppose to the contrary that there exist some positive integer $t$ and
an maximal clique expansion $(S_1,\ldots,S_{t},R)$ of $\partial\mathcal{H}$ with $|S_t| \ge k+1$
such that $\sum_{i\in[t]}|S_i| \ge 30k^2 \epsilon n$.
Let $\Sigma_t = \sum_{i\in[t]}|S_i|$.

Let $\beta = 20k^2 \epsilon$.
By Claim~\ref{CLAIM:clique-number-shadow-H-upper-bound}, $|S_i| < 10k\epsilon n$ for $i\in[t]$.
So there exists $t'\le t$ such that $\beta n - 10k\epsilon n \le W_{t'} \le \beta n+ 10k\epsilon n < 30k^2 \epsilon n$.
Without loss of generality we may assume that $W_t = \lceil \beta n \rceil$
(since otherwise we may replace $W_t$ by $W_{t'}$, and the exact value of $\beta$ is not crucial in the proof
as long as $20k^2 \le \beta \le 30k^2$).
To keep the calculations simple, let us assume that $\beta n$ is an integer.

Let $E_t$ denote the number of edges in $\partial\mathcal{H}$ that have at least one vertex in $S_{1}\cup \cdots \cup S_t$
and  let $x' = 2(e-E_t)/(n-\Sigma_t)^2$.
Notice from Observation~\ref{OBS:max-clique-expansion}~$(b)$ that
\begin{align}
E_t
\le \sum_{i\in[t]}\left(|S_i|-1\right)n
= \left(\Sigma_{t}-t\right)n. \notag
\end{align}
It follows from Theorem~\ref{thm-calcel-3-induction-shadow} and Lemma~\ref{LEMMA:cancellative-clique-expansion} that
\begin{align}\label{inequ-stability-H-W-up-bound-1}
|\mathcal{H}|
& \le \frac{x'(1-x')}{6} (n-\Sigma_t)^3 + 3(n-\Sigma_t)^2 + t e \notag\\
& = \frac{-2E_t^2 +\left(4e-(n-\Sigma_t)^2 \right)E_t+ (n-\Sigma_t)^2 e-2e^2}{3(n-\Sigma_t)}+ 3(n-\Sigma_t)^2 + t e.
\end{align}
Similar to the proof of Claim~\ref{CLAIM:clique-number-shadow-H-upper-bound},
we may substitute $E_t = \left(\Sigma_t - t\right)n$ into (\ref{inequ-stability-H-W-up-bound-1}) and obtain
\begin{align}\label{inequ-stability-H-W-up-bound-2}
|\mathcal{H}|
\le \frac{-2n^2 t^2 + \left(n(n+\Sigma_t)^2-(n+3\Sigma_t)e \right)t - (e-\Sigma_t n)(2e-n^2-\Sigma_t^2)}{3(n-\Sigma_t)} + 3(n-\Sigma_t)^2.
\end{align}
Since $|S_i|\ge k+1$ for $i\in[t]$, we have $t \le W_t/(k+1)$.
Since $-2n^2 t^2 + \left(n(n+\Sigma_t)^2-(n+3\Sigma_t)e \right)t $ is increasing in $t$ when
\begin{align}
t \le  \left( n(n+\Sigma_t)^2-(n+3\Sigma_t)e \right)/({4n^2}) \notag
\end{align}
and $\left( n(n+\Sigma_t)^2-(n+3\Sigma_t)e \right)/({4n^2}) \ge \Sigma_t/(k+1)$,
we may substitute $t = \Sigma_t/(k+1)$ into (\ref{inequ-stability-H-W-up-bound-2}) and obtain
\begin{align}\label{inequ-stability-H-W-up-bound-3}
|\mathcal{H}|
& \le \frac{(k+1)\left(-2(k+1)e^2+\left( (k+1)n^2 + (2k+1)\Sigma_tn +(k-2)\Sigma_t^2 \right)e\right)}{3(k+1)^2(n-\Sigma_t)} \notag\\
& \quad - \frac{\left( (k+1)(n^2+ \Sigma_t^2)-2\Sigma_tn  \right)k\Sigma_tn}{3(k+1)^2(n-\Sigma_t)} + 3(n-\Sigma_t)^2.
\end{align}
Since $-2(k+1)^2e^2+(k+1)\left( (k+1)n^2 + (2k+1)\Sigma_tn +(k-2)\Sigma_t^2 \right)e$ is decreasing in $e$ when
\begin{align}
e\ge \frac{(k+1)n^2 + (2k+1)\Sigma_tn +(k-2)\Sigma_t^2}{4(k+1)} \notag
\end{align}
and
\begin{align}
(1-\epsilon)\frac{k-1}{2k}n^2 \ge \frac{(k+1)n^2 + (2k+1)\Sigma_tn +(k-2)\Sigma_t^2}{4(k+1)}, \notag
\end{align}
we may substitute $e = (1-\epsilon)(k-1)n^2/(2k)$ into (\ref{inequ-stability-H-W-up-bound-3}) and obtain
\begin{align}
|\mathcal{H}|
& \le (1-\epsilon)\frac{k-1}{6k^2} n^3 - \frac{(k+1)^2\Sigma_tn^3 - k(k^3+2k^2-k+2)\Sigma_t^2n^2+2k^3(k+1)\Sigma_t^3n }{6k^2(k+1)^2(n-\Sigma_t)} \notag\\
& \quad +\epsilon n^3 + 3(n-\Sigma_t)^2 \notag\\
& < (1-\epsilon)\frac{k-1}{6k^2} n^3 - \frac{(k+1)^2\Sigma_tn^3}{12k^2(k+1)^2n} + +\epsilon n^3 + 3(n-\Sigma_t)^2 \notag\\
& < (1-\epsilon)\frac{k-1}{6k^2} n^3 - \frac{\beta n^3}{12k^2} + +\epsilon n^3 + 3(n-\Sigma_t)^2
 < (1-\epsilon)\frac{k-1}{6k^2} n^3 \notag
\end{align}
contradicting $(\ref{equ:shaodw-and-edge-sizes-assumption})$. Here we used $\beta = 20k^2 \epsilon$.
\end{proof}%CLAIM

Now let $(S_1,\ldots, S_t, R)$ be a maximal clique expansion of $\partial\mathcal{H}$ with threshold $k+1$ for some positive integer $t$.
Let $\tilde{n} = |R|$ and $G = (\partial\mathcal{H})[R]$.
Notice that by the definition of threshold, $G$ is $K_{k+1}$-free.
It follows from Claim~\ref{claim-stability-sum-Si-up-bound} that
$\tilde{n} = n-\sum_{i\in[t]}|S_i| \le n - 30k^2\epsilon n$ and
\begin{align}\label{equ:shadow-on-R-lower-bound}
|G|
> |\partial\mathcal{H}| - 30k^2\epsilon n \cdot n
\ge \left(1-\epsilon\right)\frac{k-1}{2k}n^2 - 30k^2\epsilon n^2
> \frac{k-1}{2k}n^2 - 31k^2\epsilon n^2.
\end{align}
Define
\begin{align}
Z(G)
= \left\{v\in R\colon d_{G}(v) \le \frac{3k-4}{3k-1}\tilde{n}+ 100k^4\epsilon \tilde{n}\right\}. \notag
\end{align}

\begin{claim}\label{CLAIM:Z-upper-bound}
We have $|Z(G)| < 100k^4\epsilon \tilde{n}$.
\end{claim}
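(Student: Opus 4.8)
The plan is to argue by contradiction, showing that a large $Z(G)$ forces $G$ to have fewer edges than $(\ref{equ:shadow-on-R-lower-bound})$ permits. Suppose $|Z(G)| \ge 100k^4\epsilon\tilde{n}$, fix a subset $Z\subseteq Z(G)$ with $|Z| = z$ where $z = \lceil 100k^4\epsilon\tilde{n}\rceil$, and (to keep the arithmetic clean) assume $100k^4\epsilon\tilde{n}$ is an integer. Every edge of $G$ either lies inside $R\setminus Z$ or meets $Z$, so
\[
|G| \le |G[R\setminus Z]| + \sum_{v\in Z} d_{G}(v).
\]
Since $G$ is $K_{k+1}$-free, so is its induced subgraph $G[R\setminus Z]$, and Tur\'{a}n's theorem yields $|G[R\setminus Z]| \le \frac{k-1}{2k}(\tilde{n}-z)^2$. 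From the definition of $Z(G)$ we get $\sum_{v\in Z}d_{G}(v) \le z\bigl(\frac{3k-4}{3k-1}\tilde{n} + 100k^4\epsilon\tilde{n}\bigr)$.

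Substituting and expanding, the coefficient of the cross term $z\tilde{n}$ is $-\frac{k-1}{k} + \frac{3k-4}{3k-1} = -\frac{1}{k(3k-1)} < 0$, while the leftover contributions $\frac{k-1}{2k}z^2$ and $100k^4\epsilon z\tilde{n}$ are $O(k^{9}\epsilon^{2}n^{2})$ because $z = O(k^{4}\epsilon n)$ and $\tilde{n}\le n$. Hence
\[
|G| \;\le\; \frac{k-1}{2k}\tilde{n}^2 - \frac{1}{k(3k-1)}\,z\tilde{n} + O(k^{9}\epsilon^{2}n^{2}) \;\le\; \frac{k-1}{2k}n^2 - \frac{1}{k(3k-1)}\,z\tilde{n} + O(k^{9}\epsilon^{2}n^{2}),
\]
using $\tilde{n}\le n$ in the last step. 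Comparing with $(\ref{equ:shadow-on-R-lower-bound})$, namely $|G| > \frac{k-1}{2k}n^2 - 31k^2\epsilon n^2$, we obtain $\frac{1}{k(3k-1)}\,z\tilde{n} < 31k^2\epsilon n^2 + O(k^{9}\epsilon^{2}n^{2})$. On the other hand, $\tilde{n} \ge n - 30k^2\epsilon n$ and $z\ge 100k^4\epsilon\tilde{n}$ give $\frac{1}{k(3k-1)}z\tilde{n} \ge \frac{100k^{3}}{3k-1}\epsilon\tilde{n}^{2} \ge \frac{100k^{2}}{3}\epsilon(1-30k^2\epsilon)^2 n^{2}$, which exceeds $33k^2\epsilon n^2$ once $\epsilon$ is small (recall $\frac{100}{3} > 33$). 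These two bounds are contradictory for all sufficiently small $\epsilon$, i.e. for $\epsilon$ below some threshold $c_{1}(k)$, which proves the claim.

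The work here is entirely bookkeeping of the $\epsilon$-error terms: one must check that the three lower-order contributions — the $\tfrac{k-1}{2k}z^2$ term, the $100k^4\epsilon z\tilde{n}$ term, and the slack between $\tilde{n}^2$ and $n^2$ — are all swamped by the linear-in-$\epsilon$ main term $\frac{1}{k(3k-1)}z\tilde{n}$, which is about $\frac{100k^2}{3}\epsilon n^2$ and comfortably beats the $31k^2\epsilon n^2$ coming from $(\ref{equ:shadow-on-R-lower-bound})$. The only real subtlety is that the constant $100$ in the definition of $Z(G)$ was chosen precisely so that $\frac{100}{3} > 31$; with any such constant the argument goes through once $c_{1}(k)$ is taken small enough (e.g. $c_{1}(k) = \Theta(k^{-7})$ suffices), and then one takes the minimum of this threshold with those from the earlier claims.
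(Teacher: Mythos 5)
Your proposal is correct and follows essentially the same route as the paper: assume $|Z(G)|\ge 100k^4\epsilon\tilde{n}$, bound $|G|$ by Tur\'{a}n's theorem on $G[R\setminus Z]$ plus the degree sum over $Z$, and observe that the cross term $-\frac{1}{k(3k-1)}z\tilde{n}$ (of order $\frac{100k^2}{3}\epsilon n^2$) overwhelms the $O(\epsilon^2)$ leftovers and the $31k^2\epsilon n^2$ slack in $(\ref{equ:shadow-on-R-lower-bound})$, exactly the computation in the paper. The only cosmetic difference is that you invoke the lower bound $\tilde{n}\ge(1-30k^2\epsilon)n$ to compare against $(\ref{equ:shadow-on-R-lower-bound})$, while the paper keeps its final inequality in terms of $\tilde{n}^2$; both are valid.
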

\begin{proof}[Proof of Claim~\ref{CLAIM:Z-upper-bound}]
Let $z = |Z(G)|$ and suppose to the contrary that $z \ge 100k^4\epsilon \tilde{n}$.
To keep the calculations simple, let us assume that $100k^4\epsilon\tilde{n}$ is an integer.
We may assume that $z = 100k^4\epsilon\tilde{n}$ since otherwise we replace $Z(G)$ by a subset of size $100k^4\epsilon\tilde{n}$.
Let $R' = R\setminus Z(G)$.
Since $G[R']$ is $K_{k+1}$-free, by Tur\'{a}n's theorem $|G[R']| \le \frac{k-1}{2k}\left(\tilde{n}-z\right)^2$.
Therefore,
\begin{align}
|G|
& \le \frac{k-1}{2k}\left(\tilde{n}-z\right)^2 + \left(\frac{3k-4}{3k-1}\tilde{n}+100k^4\epsilon \tilde{n}\right)z \notag\\
& \le  \frac{k-1}{2k} \tilde{n}^2 - \frac{1}{3k^2-k}z\tilde{n} + z^2 + 100k^4\epsilon \tilde{n} z \notag\\
& < \frac{k-1}{2k} \tilde{n}^2 - \frac{100k^4\epsilon}{3k^2-k}\tilde{n}^2 + 2\left(100k^4\epsilon\tilde{n}\right)^2
 < \frac{k-1}{2k}\tilde{n}^2 - 31k^2\epsilon\tilde{n}^2, \notag
\end{align}
which contradicts $(\ref{equ:shadow-on-R-lower-bound})$.
\end{proof}%CLAIM

Let $U = S_{1}\cup \cdots \cup S_t \cup Z(G)$.
Then by Claims~\ref{claim-stability-sum-Si-up-bound} and~\ref{CLAIM:Z-upper-bound},
$|U| \le 30k^2\epsilon n + 100k^4\epsilon \tilde{n} < 130k^4\epsilon n$.
On the other hand, by $(\ref{equ:shadow-on-R-lower-bound})$ and Claim~\ref{CLAIM:Z-upper-bound}
the induced subgraph of $G$ on $V(\mathcal{H})\setminus U$ has minimum degree at least
\begin{align}
\frac{3k-4}{3k-1}\tilde{n}+ 100k^4\epsilon \tilde{n} - |Z(G)|\tilde{n}
> \frac{3k-4}{3k-1}\tilde{n}. \notag
\end{align}
So by Theorem~\ref{THM:AES74}, the induced subgraph $G[R\setminus Z(G)]$ is $k$-partite.
In other words, the induced subgraph of $\partial\mathcal{H}$ on $V(\mathcal{H})\setminus U$ is $k$-partite.
This completes the proof of Lemma~\ref{LEMMA:cancellative-3-graph-stability-decompose-vertex-set}.
\end{proof}%LEMMA

%%%%%%%%%%%%%%%%%%%%%%%%%%%%%%%
\subsection{Proof of Lemma~\ref{LEMMA:k-partite-cancellative-3-graph-stability}}\label{SUBSEC:proof-second-lemma}
We prove Lemma~\ref{LEMMA:k-partite-cancellative-3-graph-stability} in this section.
The following observation about the Steiner triple systems will be helpful to understand the proof
(starting from Claim~\ref{claim-cancel-3-L(v)-disjoint-bipartite}) of
Lemma~\ref{LEMMA:k-partite-cancellative-3-graph-stability}.

\begin{observation}\label{OBS:blowup-Steiner-triple-system}
Suppose that $\mathcal{H}$ is $\mathcal{S}$-colorable for some $\mathcal{S}\in {\rm STS}(k)$. Then
for every $v\in V(\mathcal{H})$ the link $L_{\mathcal{H}}(v)$ consists of $(k-1)/2$ pairwise vertices disjoint complete graphs.
\end{observation}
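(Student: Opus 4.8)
The plan is to unwind the definition of $\mathcal{S}$-colourability and combine it with the elementary fact that in a Steiner triple system on $k$ vertices every point lies in exactly $(k-1)/2$ blocks. First I would fix a map $\phi\colon V(\mathcal{H})\to V(\mathcal{S})$ witnessing $\mathcal{S}$-colourability, so that $\phi(E)\in\mathcal{S}$ for every $E\in\mathcal{H}$, fix a vertex $v\in V(\mathcal{H})$, and set $a=\phi(v)$. Since $k\in 6\mathbb{N}+\{1,3\}$ is odd and every pair $\{a,b\}$ with $b\neq a$ lies in a unique edge of $\mathcal{S}$, the edges of $\mathcal{S}$ through $a$ are precisely $\{a,b_i,c_i\}$ for $i\in[(k-1)/2]$, and the pairs $\{b_1,c_1\},\dots,\{b_{(k-1)/2},c_{(k-1)/2}\}$ form a perfect matching on $V(\mathcal{S})\setminus\{a\}$. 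Pulling these parts back, set $X_i=\phi^{-1}(b_i)$ and $Y_i=\phi^{-1}(c_i)$; then the $k-1$ sets $X_1,Y_1,\dots,X_{(k-1)/2},Y_{(k-1)/2}$ are pairwise disjoint.

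Next I would locate each edge of the link. If $\{x,y\}\in L_{\mathcal{H}}(v)$ then $\{v,x,y\}\in\mathcal{H}$, so $\{a,\phi(x),\phi(y)\}=\phi(\{v,x,y\})\in\mathcal{S}$; since members of $\mathcal{S}$ have three elements, $a,\phi(x),\phi(y)$ are three distinct points, which forces $\{\phi(x),\phi(y)\}=\{b_i,c_i\}$ for exactly one $i$, and hence $\{x,y\}$ has one endpoint in $X_i$ and the other in $Y_i$. Consequently $L_{\mathcal{H}}(v)$ splits as the disjoint union, over $i\in[(k-1)/2]$, of bipartite graphs with parts $X_i$ and $Y_i$, and these $(k-1)/2$ pieces are pairwise vertex-disjoint. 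When $\mathcal{H}$ is an honest blowup of $\mathcal{S}$ (the ``if and only if'' clause of Definition~\ref{DFN:blowup}), each piece is the full complete bipartite graph $K_{X_i,Y_i}$, which is the asserted structure; for a merely $\mathcal{S}$-colourable $\mathcal{H}$ one gets the containment $L_{\mathcal{H}}(v)\subseteq\bigcup_{i}K_{X_i,Y_i}$.

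I do not expect a genuine obstacle here: once the standard Steiner-triple-system fact is recalled, the statement is essentially a one-line pullback. The only points that deserve care are that $k$ odd is exactly what makes $(k-1)/2$ an integer and the matching $\{b_i,c_i\}_i$ on $V(\mathcal{S})\setminus\{a\}$ perfect, and that (since a link in a cancellative $3$-graph is triangle-free while its degree grows with $n$) the pieces should be read as complete \emph{bipartite} graphs and ``consists of'' as ``is a subgraph of the union of'' unless $\mathcal{H}$ is a genuine blowup. This decomposition of every link into $(k-1)/2$ disjoint bipartite pieces is precisely the picture exploited later in the proof of Lemma~\ref{LEMMA:k-partite-cancellative-3-graph-stability}.
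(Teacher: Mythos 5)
Your proof is correct and is exactly the intended argument: the paper states this observation without proof (it is offered only as intuition for the proof of Lemma~\ref{LEMMA:k-partite-cancellative-3-graph-stability}), and unwinding the coloring $\phi$ together with the fact that each point of a $k$-vertex Steiner triple system lies in exactly $(k-1)/2$ triples is all there is to it. Your two caveats are also the right reading of the loosely worded statement: the pieces are complete \emph{bipartite} graphs $K[\phi^{-1}(b_i),\phi^{-1}(c_i)]$, with this structure attained exactly for genuine blowups and only the containment $L_{\mathcal{H}}(v)\subseteq\bigcup_i K[\phi^{-1}(b_i),\phi^{-1}(c_i)]$ for merely $\mathcal{S}$-colorable $\mathcal{H}$, which is precisely how the paper itself phrases the picture later, in Claim~\ref{claim-cancel-3-L(v)-disjoint-bipartite} and in the proof of Lemma~\ref{LEMMA:edges-in-S-blowup-3-graph}.
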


\begin{proof}[Proof of Lemma~\ref{LEMMA:k-partite-cancellative-3-graph-stability}]
Fix $k\in 6\mathbb{N}+\{1,3\}$ and $k\ge 3$.
Let $\epsilon' > 0$ be a sufficiently small constant and $n$ be a sufficiently large integer.
Let $(G,\mathcal{H})$ be a cancellative pair on $n$ vertices that satisfies assumptions in Lemma~\ref{LEMMA:k-partite-cancellative-3-graph-stability}.
Let $V = V(G) = V(\mathcal{H})$ and
suppose that $V = V_{1}\cup \cdots \cup V_{k}$ is a partition such that every edge in $G$ contains
at most one vertex from each $V_i$.
Let $\widehat{G}$ denote the complete $k$-partite graph with $k$-parts $V_1,\ldots,V_{k}$.
Let $M_{G} = \widehat{G}\setminus G$ and call members in $M_{G}$ missing edges of $G$.
Notice that
\begin{align}\label{equ:missing-shadows-upper-bound}
|M_{G}|
= \sum_{1\le i < j \le k}|V_i||V_j| - |G|
\le \frac{k-1}{2k}n^2 - \left(1-\epsilon'\right)\frac{k-1}{2k}n^2
= \frac{k-1}{2k}\epsilon' n^2
< \frac{\epsilon' n^2}{2}.
\end{align}

The following claim can be proved easily using the following inequality (see \cite{LMR2})
\begin{align}
\sum_{1\le i < j \le k}x_ix_j
+ \frac{1}{2}\sum_{i\in [k]}\left(x_i-\frac{1}{k}\right)^2
\le \frac{k-1}{2k}, \notag
\end{align}
where $x_1,\ldots,x_k \in[0,1]$ are real numbers satisfying $x_1+\cdots+x_k = 1$.

\begin{claim}\label{claim-Vi-up-low-bound}
We have $\left||V_i| - n/k\right| \le 2(\epsilon')^{1/2}n$ for every $i \in [k]$.
\end{claim}

The next claim gives an upper bound for the maximum codegree of $\mathcal{H}$.

\begin{claim}\label{CLAIM:size-common-neighbors}
We have $\Delta_2(\mathcal{H}) \le n/k + 2(\epsilon')^{1/2} n + k\epsilon' n \le n/k + 3(\epsilon')^{1/2} n$.
\end{claim}
\begin{proof}[Proof of Claim~\ref{CLAIM:size-common-neighbors}]
Suppose to the contrary that there exists $uv\in\partial\mathcal{H}$ with
$N_{\mathcal{H}}(uv) > n/k + 2(\epsilon')^{1/2} n + k\epsilon n$.
Let $V_i' = N_{\mathcal{H}}(uv)\cap V_i$ and $x_i = |V_i'|$ for $i\in[k]$.
By Observation~\ref{OBS:cancellative-pair-co-neighbor-is-independent}, $N_{\mathcal{H}}(uv)$ is independent in $G$.
Therefore, every pair $\{u_i,u_j\}$ with $u_i\in V_i'$, $u_j\in V_j'$, and $i\neq j$, is a member in $M_{G}$.
In particular, $\sum_{1\le i < j \le k}x_ix_j \le |M_{G}|$.
Claim \ref{claim-Vi-up-low-bound} implies that $x_{i}\le n/k + 2(\epsilon')^{1/2} n$ for $i\in[k]$,
which combined with $\sum_{i\in[k]}x_i = |N_{\mathcal{H}}(uv)| \ge n/k + 2(\epsilon')^{1/2} n + k\epsilon' n$ imply that
\begin{align}
\sum_{1\le i < j \le k}x_ix_j
\ge \left(\frac{n}{k}+2(\epsilon')^{1/2} n\right)
 \left(\frac{n}{k}+ 2(\epsilon')^{1/2} n + k\epsilon' n-\left(\frac{n}{k}+2(\epsilon')^{1/2} n\right)\right)
\ge \epsilon' n^2 > |M_{G}| \notag
\end{align}
contradicting $(\ref{equ:missing-shadows-upper-bound})$.
\end{proof}%CLAIM

Define the set of edges in $G$ with small codegree in $\mathcal{H}$ as
\begin{align}
G_{s} = \left\{uv\in G\colon |N_{\mathcal{H}}(uv)| \le \frac{n}{2k}\right\}. \notag
\end{align}

\begin{claim}\label{claim-cancel-3-BE-up-bound}
We have $|G_{s}|< 4k (\epsilon')^{1/2}n^2$.
\end{claim}
\begin{proof}[Proof of Claim \ref{claim-cancel-3-BE-up-bound}]
Suppose to the contrary that $|G_{s}|\ge 4k (\epsilon')^{1/2}n^2$.
Then it follows from $\sum_{uv\in G}|N_{\mathcal{H}}(uv)| \ge 3|\mathcal{H}|$,
Claims~\ref{CLAIM:shadow-H-upper-bound} and~\ref{CLAIM:size-common-neighbors} that
\begin{align}
3|\mathcal{H}|
& \le \frac{n}{2k}|G_{s}| + \left(\frac{n}{k}+3(\epsilon')^{1/2}n\right)\left(|G|-|G_{s}|\right) \notag\\
& \le \left(\frac{n}{k}+3(\epsilon')^{1/2}n\right)|G| - \frac{n}{2k}|G_{s}| \notag\\
& \le \left(\frac{n}{k}+3(\epsilon')^{1/2}n\right)\left(\frac{k-1}{2k}+\epsilon'\right)n^2 - 2(\epsilon')^{1/2}n^3
 \le \frac{k-1}{2k^2} - \frac{(\epsilon')^{1/2}}{4}n^3 \notag
\end{align}
contradicting $(\ref{equ:cancellative-pair-inequalities})$.
\end{proof}%CLAIM

The following claim shows that for every $uv \in G$ most vertices in $N_{\mathcal{H}}(uv)$
will be contained in some $V_i$. It will be used intensively in the remaining part of the proof.

\begin{claim}\label{claim-cancel-3-N(uv)-concenrate}
For every $uv \in G$ and every $i \in [k]$ either
\begin{align}
|N_{\mathcal{H}}(uv)\cap V_i| < \frac{\epsilon' n^2}{|N_{\mathcal{H}}(uv)|}
\quad{\rm or}\quad
|N_{\mathcal{H}}(uv)\cap V_i| > |N_{\mathcal{H}}(uv)|-\frac{\epsilon' n^2}{|N_{\mathcal{H}}(uv)|}. \notag
\end{align}
In particular, if $|N_{\mathcal{H}}(uv)| > (\epsilon' k)^{1/2}n$,
then there exists a unique $i \in [k]$ such that
$|N_{\mathcal{H}}(uv)\cap V_i| > |N_{\mathcal{H}}(uv)|- \epsilon' n^2/|N_{\mathcal{H}}(uv)|$.
\end{claim}
\begin{proof}[Proof of Claim~\ref{claim-cancel-3-N(uv)-concenrate}]
Fix $uv\in G$ and $i \in [k]$.
We may assume that $|N_{\mathcal{H}}(uv)| \ge (2\epsilon')^{1/2}n$ since otherwise we would have
\begin{align}
\frac{\epsilon' n^2}{|N_{\mathcal{H}}(uv)|} \ge |N_{\mathcal{H}}(uv)|-\frac{\epsilon' n^2}{|N_{\mathcal{H}}(uv)|}, \notag
\end{align}
and there is nothing to prove.

Suppose that there exists a set $V_i$ contradicting the assertion of Claim~\ref{claim-cancel-3-N(uv)-concenrate}.
Let $\alpha=|N_{\mathcal{H}}(uv)|$ and $\beta = |N_{\mathcal{H}}(uv)\cap V_i|$.
Then similar to the proof of Claim~\ref{CLAIM:size-common-neighbors} we obtain
\begin{align}
|M_{G}|
\ge \beta(\alpha-\beta)
\ge \frac{\epsilon' n^2}{\alpha}\left(\alpha - \frac{\epsilon' n^2}{\alpha}\right)
= \epsilon' n^2 - \left(\frac{\epsilon' n^2}{\alpha}\right)^2
\ge \frac{\epsilon'}{2}n^2, \notag
\end{align}
which contradicts $(\ref{equ:missing-shadows-upper-bound})$.

Now, suppose that $|N_{\mathcal{H}}(uv)| > (\epsilon' k)^{1/2}n$.
Since
\begin{align}
k \times \frac{\epsilon' n^2}{|N_{\mathcal{H}}(uv)|}
< \frac{\epsilon' kn^2}{(\epsilon' k)^{1/2} n}
= (\epsilon' k)^{1/2} n
< |N_{\mathcal{H}}(uv)|, \notag
\end{align}
there exists $i \in [k]$ such that
$|N_{\mathcal{H}}(uv)\cap V_i| > |N_{\mathcal{H}}(uv)|-\epsilon' n^2/{|N_{\mathcal{H}}(uv)|}$.
Since
\begin{align}
|N_{\mathcal{H}}(uv)|-\frac{\epsilon' n^2}{|N_{\mathcal{H}}(uv)|}
> |N_{\mathcal{H}}(uv)|-\frac{\epsilon' n^2}{(\epsilon' k)^{1/2} n}
= |N_{\mathcal{H}}(uv)|- \frac{(\epsilon')^{1/2}}{k^{1/2}}n
> \frac{|N_{\mathcal{H}}(uv)|}{2}, \notag
\end{align}
such $i$ is unique.
\end{proof}%CLAIM

\begin{claim}\label{CLAIM:cancel-3-max-degree-mathcal-H}
We have $\Delta(\mathcal{H})<\frac{k-1}{2k^2}n^2 + 3(\epsilon')^{1/2} n^2$.
\end{claim}
\begin{proof}[Proof of Claim~\ref{CLAIM:cancel-3-max-degree-mathcal-H}]
Fix $v \in V$ and it suffices to show that $d_{\mathcal{H}}(v)<\frac{k-1}{2k^2}n^2 + 3(\epsilon')^{1/2} n^2$.
Without loss of generality, we may assume that $v\in V_1$.
For every vertex $w \in N_{\mathcal{H}}(v) \subset \bigcup_{i=2}^{k}V_{i}$
let $d_{v}(w)$ denote the degree of $w$ in the link graph $L_{\mathcal{H}}(v)$.
It follows from Claim~\ref{CLAIM:size-common-neighbors} that $d_v(w) = |N_{\mathcal{H}}(vw)| < n/k + 3(\epsilon')^{1/2}n$.
Therefore, by Claim~\ref{claim-Vi-up-low-bound},
\begin{align}
d_{\mathcal{H}}(v)
= |L_{\mathcal{H}}(v)|
= \frac{1}{2} \sum_{w\in \bigcup_{i=2}^{k}V_{i}} d_{v}(w)
& < \frac{1}{2}(k-1)\left(\frac{n}{k}+ 2(\epsilon')^{1/2}n \right) \left(\frac{n}{k} + 3(\epsilon')^{1/2}n\right) \notag\\
& < \frac{k-1}{2k^2}n^2 + 3(\epsilon')^{1/2} n^2. \notag
\end{align}
\end{proof}%CLAIM

Define the set of vertices with small degree in $\mathcal{H}$ as
\begin{align}
V_{s} = \left\{ v\in V\colon d_{\mathcal{H}}(v) < \frac{k-1}{2k^2}n^2 - 18(\epsilon')^{1/2} k n^2 \right\}. \notag
\end{align}

\begin{claim}\label{claim-cancel-3-small-degree-vertices-up-bound}
We have $|V_{s}| < \frac{n}{6k}$.
\end{claim}
\begin{proof}[Proof of Claim~\ref{claim-cancel-3-small-degree-vertices-up-bound}]
Suppose to the contrary that $|V_{s}| \ge \frac{n}{6k}$.
Then by Claim~\ref{CLAIM:cancel-3-max-degree-mathcal-H}, we have
\begin{align}
3|\mathcal{H}|
 = \sum_{v\in V}d_{\mathcal{H}}(v)
& \le \left(\frac{k-1}{2k^2}n^2 + 3(\epsilon')^{1/2} n^2\right)\left(n-\frac{n}{6k}\right)
   + \left(\frac{k-1}{2k^2}n^2 - 18 (\epsilon')^{1/2} k n^2\right) \frac{n}{6k} \notag\\
& \le \left(\frac{k-1}{2k^2} + 3(\epsilon')^{1/2}\right)n^3 - \left(18 (\epsilon')^{1/2} k + 3(\epsilon')^{1/2}\right) \frac{n^3}{6k} \notag\\
& \le \frac{k-1}{2k^2}n^3 - \frac{(\epsilon')^{1/2}}{2k}n^3 \notag
\end{align}
contradicting $(\ref{equ:cancellative-pair-inequalities})$.
\end{proof}%CLAIM

First we show that if $v\in V\setminus V_s$, then the structure of $L_{\mathcal{H}}(v)$ is close to
what we expect to see in a blowup of Steiner triple systems on $k$ vertices.

\begin{claim}\label{claim-cancel-3-L(v)-disjoint-bipartite}
For every $v\in V\setminus V_{s}$ there exists a subgraph of
$L_{\mathcal{H}}(v)$ of size at least $d_{\mathcal{H}}(v) - 243(\epsilon')^{1/2}k^3n^2$
that consists of $(k-1)/2$ pairwise vertex disjoipnt bipartite graphs.
\end{claim}
\begin{proof}[Proof of Claim~\ref{claim-cancel-3-L(v)-disjoint-bipartite}]
Fix $v\in V\setminus V_{s}$ and without loss of generality we may assume that $v\in V_1$.
Note that $L_{\mathcal{H}}(v)$ is a graph on $N_{\mathcal{H}}(v) \subset \bigcup_{i=2}^{k}V_{i}$.
For every $u \in N_{\mathcal{H}}(v)$ let $d_{v}(u)$ denote the degree of $u$ in the link $L_{\mathcal{H}}(v)$.
Define the set of vertices with small degree in $L_{\mathcal{H}}(v)$ as
\begin{align}
N_{s} = \left\{u\in \bigcup_{i=2}^{k}V_{i}\colon d_{v}(u) \le \frac{n}{k} - 240(\epsilon')^{1/2}k^2 n\right\}.   \notag
\end{align}
We claim that $|N_{s}| \le \frac{n}{6k}$ since otherwise we would have
\begin{align}
2|L_{\mathcal{H}}|
& = \sum_{v\in V\setminus V_1}d_{v}(w) \notag\\
& \le \left(\frac{n}{k}+3(\epsilon')^{1/2}n\right)\left(|V\setminus V_1|-|N_{s}|\right)
      + \left(\frac{n}{k} - 240(\epsilon')^{1/2}k^2 n\right) |N_{s}| \notag\\
& \le \left(\frac{n}{k}+3(\epsilon')^{1/2}n\right)\left(\frac{k-1}{k}n+2(\epsilon')^{1/2}n\right)
      - \left(3(\epsilon')^{1/2}n+ 240(\epsilon')^{1/2}kn\right)\frac{n}{6k} \notag\\
& < \frac{k-1}{k}n^2 - 37(\epsilon')^{1/2}kn^2, \notag
\end{align}
which contradicts the assumption that $v\in V\setminus V_{s}$.
Therefore, by Claim~\ref{claim-Vi-up-low-bound}, for every $i \in [2,k]$ we have
\begin{align}\label{inequ-cancel-3-V-minus-Bv-low-bound}
|V_{i}\setminus N_{s}|
> \left(\frac{n}{k}-2(\epsilon')^{1/2}n\right) - \frac{n}{6k}
> \frac{2n}{3k}.
\end{align}
Fix $i \in [2,k]$ and a vertex $u \in V_{i}\setminus N_{s}$.
By Claim \ref{claim-cancel-3-N(uv)-concenrate},
there exists a unique $\psi(u,i) \in [2,k]$, $\psi(u,i)\neq i$, such that
\begin{align}
|N_{\mathcal{H}}(uv) \cap V_{\psi(u,i)}|
 > |N_{\mathcal{H}}(uv)|-\frac{\epsilon' n^2}{|N_{\mathcal{H}}(uv)|}
& = d_{v}(u) - \frac{\epsilon' n^2}{d_v(u)} \notag\\
& > \left(\frac{n}{k} - 240(\epsilon')^{1/2}k^2n\right) - \frac{\epsilon' n^2}{n/(2k)} \notag\\
& > \frac{n}{k} - 241(\epsilon')^{1/2}k^2n. \notag
\end{align}
Since $\psi(u,i) \in [2,k]$ for every $u\in V_{i}\setminus N_{s}$,
it follows from (\ref{inequ-cancel-3-V-minus-Bv-low-bound}) and the Pigeonhole principle that
there exists a set $U_i \subset V_i\setminus B_v$ with $|U_i| > 2n/(3k(k-2))$
such that $\psi(u,i) = \psi(u',i)$ for every pair $u,u'\in U_i$.
We abuse notation by letting $\psi(i) = \psi(u,i)$ for $u\in U_{i}$.

Define the bipartite graph $G_{i,\psi(i)}$ as
\begin{align}
G_{i,\psi(i)} = \left\{ uw \in L_{\mathcal{H}}(v)\colon u\in U_{i}, w \in V_{\psi(i)} \right\}, \notag
\end{align}
and notice that
\begin{align}\label{inequ-cancel-3-Gii'-low-bound}
|G_{i,\psi(i)}| > |U_i| \left(\frac{n}{k} - 241(\epsilon')^{1/2}k^2n\right).
\end{align}
Let
$$V_{\psi(i)}' = \left\{w\in V_{i'}\colon d_{G_{i,\psi(i)}}(w)> |U_i|/2 > n/(3k(k-2))\right\}.$$
Then it follows from
$\sum_{w\in U_{i}} d_{G_{i,\psi(i)}}(w) = |G_{i,\psi(i)}| = \sum_{w\in V_{\psi(i)}} d_{G_{i,\psi(i)}}(w)$ that
\begin{align}
|U_i| \left(\frac{n}{k} - 241(\epsilon')^{1/2}k^2n\right)
< |V_{\psi(i)}'||U_i| + \left(|V_{\psi(i)}|-|V_{\psi(i)}'|\right)\frac{|U_i|}{2}
= \frac{|U_i|}{2}\left(V_{\psi(i)}+V_{\psi(i)}'\right), \notag
\end{align}
which with Claim \ref{claim-Vi-up-low-bound} imply that
\begin{align}
|V_{\psi(i)}'|
& > 2\left(\frac{n}{k} - 241(\epsilon')^{1/2}k^2n\right) - |V_{\psi(i)}'| \notag\\
& > 2\left(\frac{n}{k} - 241(\epsilon')^{1/2}k^2n\right) - \left(\frac{n}{k}+2(\epsilon')^{1/2}n\right)
  > \frac{n}{k} - 242(\epsilon')^{1/2}k^2n. \notag
\end{align}
For every $w \in V_{\psi(i)}'$ since
\begin{align}
|N_{\mathcal{H}}(vw) \cap V_{i}| \ge d_{G_{i,\psi}}(w)> \frac{n}{3k(k-2)}
> \frac{\epsilon' n^2}{|N_{\mathcal{H}}(vw)|}, \notag
\end{align}
it follows from Claim \ref{claim-cancel-3-N(uv)-concenrate} that, in fact,
\begin{align}
|N_{\mathcal{H}}(vw) \cap V_{i}|
> |N_{\mathcal{H}}(vw)| - \frac{\epsilon' n^2}{|N_{\mathcal{H}}(vw)|}
& > |N_{\mathcal{H}}(vw)| - \frac{\epsilon' n^2}{n/(3k(k-2))} \notag\\
& > |N_{\mathcal{H}}(vw)| - 3\epsilon' k^2n. \notag
\end{align}
Consequently, $\psi(w,\psi(i)) = i$ for every $w\in V_{\psi(i)}'$, and we abuse notation by writing it as $\psi^2(i)=i$.

Repeating the argument above we obtain a set $V_{i}' \subset V_{i}$ of size at least
$n/k - 242(\epsilon')^{1/2}k^2n$ such that
$|N_{\mathcal{H}}(vw) \cap V_{i}| \ge |N_{\mathcal{H}}(vw)| - 3\epsilon' k^2n$
for every $w\in V_{i}'$.

View $\psi$ as a map from $[2,k]$ to $[2,k]$.
Then due to $\psi^2(i) = i$ for $i\in[2,k]$
the map $\psi$ defines a perfect matching, namely $\{\{i,\psi(i)\}\colon i\in[2,k]\}$
(note that $\{i,\psi(i)\}$ and $\{\psi(i),\psi(\psi(i))\}$ are the same), on $[2,k]$.

To keep the notations simple, let us assume that $\psi(i) = (k-1)/2 + i$ for $2\le i \le (k+1)/2$.
Then the argument above implies that the number of edges in $L_{\mathcal{H}}(v)$ that are not contained in the union of
the induced bipartite subgraphs $\bigcup_{i=2}^{(k+1)/2}L_{\mathcal{H}}(v)[V_i,V_{\psi(i)}]$ is at most
\begin{align}
& \sum_{i=2}^{(k+1)/2}\left(3\epsilon' k^2n\left(|V_{i}'|+|V_{\psi(i)}'|\right)
+ n\left(|V_{i}\setminus V_{i}'|+|V_{\psi(i)}\setminus V_{\psi(i)}'|\right)\right) \notag\\
& \le \frac{k-1}{2}\left(3\epsilon' k^2n^2
+ 2\left(242(\epsilon')^{1/2}k^2n+2(\epsilon')^{1/2}n\right)n\right)
 \le 243(\epsilon')^{1/2}k^3n^2. \notag
\end{align}
\end{proof}%CLAIM

The proof of Claim~\ref{claim-cancel-3-L(v)-disjoint-bipartite} implies that for every $i\in [k]$ and every $v\in V_{i}\setminus V_{s}$
there is a bijection $\psi_{v}\colon [k]\setminus\{i\} \to [k]\setminus\{i\}$ with $\psi_{v}^{2}(j) = j$ for every $j\in [k]\setminus\{i\}$
such that all but at most $243(\epsilon')^{1/2}k^3n^2$ edges in $L_{\mathcal{H}}(v)$ is contained in
the union of the induced bipartite subgraphs $\bigcup_{j\in [k]\setminus\{i\}}L_{\mathcal{H}}(v)[V_j,V_{\psi_{v}(j)}]$.

Now fix $i\in[k]$ and without loss of generality we may assume that $i=1$.
By the Pigeonhole principle, there exists a set $W_{1}\subset V_1\setminus V_{s}$
of size at least $|V_1\setminus V_{s}|/(k-1)! > {n}/{(2k!)}$
(here $(k-1)!$ is an upper bound for the number of bijections between $[2,k]$ and $[2,k]$)
such that $\psi_{v} \equiv \psi_{v'}$ for every pair $v,v'\in W_1$.
To keep the notations simple, let $\psi_{1}$ be the bijection that satisfies $\psi_{1} \equiv \psi_{v}$ for $v\in W_1$,
and further assume that $\psi_{1}(i) = (k-1)/2+i$ for $2\le i \le (k+1)/2$.

Define an auxiliary bipartite graph $M$
with two parts $P_1 = W_1$ and $P_2 = \bigcup_{i=2}^{(k+1)/2}V_{i}\times V_{\psi_{1}(i)}$
such that a pair $\{v, (u,w)\}$ is an edge in $M$ if and only if $\{u,w\}\in L_{\mathcal{H}}(v)$.
It follows from Claim~\ref{claim-cancel-3-L(v)-disjoint-bipartite} that for every $v\in P_1$ we have
\begin{align}
d_{M}(v)
\ge d_{\mathcal{H}}(v) - 243(\epsilon')^{1/2}k^3n^2
& > \left(\frac{k-1}{2k^2}n^2-18(\epsilon')^{1/2}kn^2\right)- 243(\epsilon')^{1/2}k^3n^2  \notag\\
& > \frac{k-1}{2k^2}n^2 - 270(\epsilon')^{1/2}k^3n^2. \notag
\end{align}
On the other hand, notice from Claim~\ref{claim-Vi-up-low-bound} that
\begin{align}
|P_2|
\le \frac{k-1}{2}\left(\frac{n}{k}+2(\epsilon')^{1/2}n\right)^{2}
< \frac{k-1}{2k^2}n^2 + 2(\epsilon')^{1/2} n^2. \notag
\end{align}
Let $P_2'$ denote the set of vertices in $P_2$ that have degree at least $|P_1|/2$ in $M$.
Then it follows from $\sum_{v\in P_1}d_{M}(v) = |G| = \sum_{e\in P_2}d_{M}(e)$ that
\begin{align}
|P_1|\left(\frac{k-1}{2k^2}n^2 - 270(\epsilon')^{1/2}k^3n^2\right)
\le |P_2'| |P_1| + \left(|P_2|-|P_2'|\right) \frac{P_1}{2}
= \frac{|P_1|}{2}\left(|P_2|+|P_2'|\right), \notag
\end{align}
which implies that
\begin{align}
|P_2'|
& \ge 2\left(\frac{k-1}{2k^2}n^2 - 270(\epsilon')^{1/2}k^3n^2\right) - |P_2| \notag\\
& > 2\left(\frac{k-1}{2k^2}n^2 - 270(\epsilon')^{1/2}k^3n^2\right) - \left(\frac{k-1}{2k^2}n^2 + 2(\epsilon')^{1/2} n^2\right) \notag\\
& > \frac{k-1}{2k^2}n^2 - 541(\epsilon')^{1/2}k^3n^2. \notag
\end{align}
For a pair $(u,w)\in P_2'$ since
\begin{align}
|N_{\mathcal{H}}(uw)\cap V_1|
\ge |N_{\mathcal{H}}(uw)\cap W_1|
\ge d_{M}((u,w))
\ge \frac{|W_1|}{2}
\ge \frac{n}{4k!}, \notag
\end{align}
it follows from Claim~\ref{claim-cancel-3-N(uv)-concenrate} that, in fact,
\begin{align}
|N_{\mathcal{H}}(uw)\cap V_1|
\ge |N_{\mathcal{H}}(uw)| - \frac{\epsilon' n^2}{|N_{\mathcal{H}}(uw)|}
\ge |N_{\mathcal{H}}(uw)| - \frac{\epsilon' n^2}{n/(4k!)}
\ge |N_{\mathcal{H}}(uw)| - 4\epsilon' k! n. \notag
\end{align}
Let
\begin{align}
\mathcal{S} = \left\{\{i,j,\psi_{i}(j)\}\colon i\in[k]\text{ and }j\in[k]\setminus \{i\}\right\} \notag
\end{align}
(every edge in $\mathcal{S}$ appeared six times in the definition above but we only keep one of them).
Notice that $\mathcal{S}$ is a Steiner triple system on $[k]$.
Let $\widehat{\mathcal{S}}$ be the blowup of $\mathcal{S}$ obtained by replacing each vertex $i$ by the set $V_i$
and replacing each edge by a corresponding complete $3$-partite $3$-graph.
Then the argument above implies that one can delete at most
\begin{align}
& k\times \left(|P_2\setminus P_2'|\times n + |P_2'|\times 4\epsilon' k! n \right) \notag\\
& \le k\times \left(\left(2\epsilon^{1/2}n+541(\epsilon')^{1/2}k^3n^2\right)\times n
    + \left(\frac{k-1}{2k^2}n^2 - 541(\epsilon')^{1/2}k^3n^2\right)\times 4\epsilon' k! n\right) \notag\\
& \le 600(\epsilon')^{1/2}k^3n^3, \notag
\end{align}
to transform $\mathcal{H}$ into a subgraph of $\widehat{S}$.
This completes the proof of Lemma~\ref{LEMMA:k-partite-cancellative-3-graph-stability}.
\end{proof}%LEMMA
%%%%%%%%%%%%%%%%%%%%%%%%%%%%%%%

%%%%%%%%%%%%%%%%%%%%%%%%%%%%%%%%%
%%%%%%%%%%%%%%%%%%%%%%%%%%%%%%%%%%%%%%%
\section{Proof of Theorem~\ref{THM:local-max-cancelltive-3-fesible-region}}\label{SEC:feasible-region}
In this section we will prove the following statement which implies Theorem~\ref{THM:local-max-cancelltive-3-fesible-region}.

\begin{theorem}\label{THM:application-stability-left-is-small}
There exists an absolute constant $c_3>0$ such that
for every constant $\epsilon$ satisfying $0\le \epsilon \le c_3$
there exists $n_0$ such that the following holds for all $n\ge n_0$.
Every cancellative $3$-graph $\mathcal{H}$ on $n$ vertices with $|\partial\mathcal{H}| = (1-\epsilon)\frac{k-1}{2k}n^2$
satisfies $|\mathcal{H}| \le \frac{k-1}{6k^2}n^3 - \frac{k-1}{4k^2}\epsilon n^3+ O(\epsilon^{3/2}n^3)$.
\end{theorem}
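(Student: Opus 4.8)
The plan is a dichotomy followed by a structure-then-count argument. If $|\mathcal{H}| < (1-\tfrac32\epsilon)\frac{k-1}{6k^2}n^3 = \frac{k-1}{6k^2}n^3-\frac{k-1}{4k^2}\epsilon n^3$ there is nothing to prove, so assume $|\mathcal{H}|\ge (1-\tfrac32\epsilon)\frac{k-1}{6k^2}n^3$. Together with $|\partial\mathcal{H}| = (1-\epsilon)\frac{k-1}{2k}n^2$ this places $\mathcal{H}$ in the hypotheses of Theorem~\ref{THM:cancel-3-stability-steiner-real-statement} for a parameter $\epsilon' = O(\epsilon)$, so running the two lemmas behind it I obtain a partition $V = V_1\cup\cdots\cup V_k$, a Steiner triple system $\mathcal{S}$ on $[k]$, and a small exceptional set $U$ such that $(\partial\mathcal{H})[V\setminus U]$ is $k$-partite with these parts and, after deleting a set $\mathcal{B}$ of ``bad'' edges, $\mathcal{H}$ embeds into the blow-up $\widehat{\mathcal{S}}$ along the partition. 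I also retain the quantitative by-products $\bigl||V_i|-n/k\bigr| = O(\epsilon^{1/2})n$, $\Delta_2(\mathcal{H})\le n/k + O(\epsilon^{1/2})n$, $\Delta(\mathcal{H})\le\frac{k-1}{2k^2}n^2 + O(\epsilon^{1/2})n^2$, and the remark that for near-extremal $\mathcal{H}$ the imbalance $D := \tfrac12\sum_i(|V_i|-n/k)^2$ is in fact $O(\epsilon)n^2$, since otherwise the full-blow-up value $\sum_{T\in\mathcal{S}}\prod_{i\in T}|V_i| = \frac{k-1}{6k^2}n^3 - \tfrac{n}{k}D + O(D^{3/2})$ already drops below the target.

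The count runs over the triples of $\mathcal{S}$. For $T=\{i,j,l\}\in\mathcal{S}$ put $e_T = |\mathcal{H}[V_i,V_j,V_l]|$ and let $p^T_{ij},p^T_{il},p^T_{jl}$ be the numbers of pairs of $\partial\mathcal{H}$ inside $V_i\times V_j$, $V_i\times V_l$, $V_j\times V_l$; write $s_T = p^T_{ij}+p^T_{il}+p^T_{jl}$. Because $\mathcal{S}$ is linear, every crossing pair of $\partial\mathcal{H}$ lies on a face of a unique $T$, so $\sum_{T}s_T = |\partial\mathcal{H}| - (\#\text{ non-crossing pairs})$; and every properly coloured edge lies in a unique $\mathcal{H}[V_i,V_j,V_l]$ with $\{i,j,l\}\in\mathcal{S}$, so $|\mathcal{H}| = \sum_{T}e_T + |\mathcal{B}|$. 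The Loomis--Whitney (box) inequality for tripartite $3$-graphs gives $e_T^{\,2}\le p^T_{ij}\,p^T_{il}\,p^T_{jl}$, whence $e_T\le(s_T/3)^{3/2}$ by AM--GM. Writing $\sigma_T := s_T/(3(n/k)^2)$, balancedness gives $\sigma_T\le 1+O(\epsilon^{1/2})$ while $\sum_T\sigma_T = (1-\epsilon)|\mathcal{S}|$ up to lower order, so $\sum_T e_T\le (n/k)^3\sum_T\sigma_T^{3/2} + (\text{error})$. Since $u\mapsto u^{3/2}$ is convex, $\sum_T\sigma_T^{3/2}$ over the simplex $\{0\le\sigma_T\le1,\ \sum_T\sigma_T = (1-\epsilon)|\mathcal{S}|\}$ is maximised at a vertex, i.e.\ equals $\lfloor s\rfloor + \{s\}^{3/2}$ with $s=(1-\epsilon)|\mathcal{S}|$. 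Using $(1-u)^{3/2}\le 1-\tfrac32u+\tfrac38u^2$ and multiplying by $(n/k)^3 = |\mathcal{S}|^{-1}\frac{k-1}{6k^2}n^3$ one obtains $\sum_T e_T \le \frac{k-1}{6k^2}n^3 - \frac{k-1}{4k^2}\epsilon n^3 + O(\epsilon^{3/2}n^3)$: here the only source of slack is the integrality gap, which is a bounded multiple of $(n/k)^3 = n^3/k^3$ and which occurs at all only once $\epsilon\gtrsim 1/|\mathcal{S}|\asymp k^{-2}$, at which point $n^3/k^3 = O(\epsilon^{3/2}n^3)$. This is exactly the $(1-\epsilon)^{3/2}$ scaling that the conjectural extremal configuration --- a balanced blow-up of an STS on $(1-\tfrac{\epsilon}{2})n$ vertices with the remaining $\tfrac{\epsilon}{2}n$ vertices isolated --- realises.

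The main obstacle is entirely quantitative: each term labelled ``error'' above must be pushed to $o(\epsilon)n^3$, whereas the bounds one gets by black-boxing the stability machinery are only $O(\epsilon)n^3$. Concretely, Lemma~\ref{LEMMA:cancellative-3-graph-stability-decompose-vertex-set} only yields $|U| = O(\epsilon)n$, and estimating the edges meeting $U$ by $|U|\cdot\Delta(\mathcal{H})$ costs $\Theta(\epsilon)n^3$ --- which would wipe out the whole $-\frac{k-1}{4k^2}\epsilon n^3$ gain; similarly the $O(\epsilon^{1/2})$ slack in $\sigma_T\le 1+O(\epsilon^{1/2})$ and in the balancedness of the $V_i$ must be shown to matter at the level $O(\epsilon^{3/2}n^3)$ only. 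So the real work is a self-improvement in the near-extremal regime, using cancellativity --- Lemma~\ref{LEMMA:cancellative-3-graphs-clique}, the clique bound $\omega(\partial\mathcal{H}) = O(\epsilon n)$, Observation~\ref{OBS:cancellative-pair-co-neighbor-is-independent}, and the fact (compare Observation~\ref{OBS:blowup-Steiner-triple-system}) that a high-degree vertex whose link meets its own part cannot occur in a near-blow-up --- to show that the vertices actually supporting $\mathcal{B}$, and the deficiency caused by imbalance, together carry only $O(\epsilon^{3/2})n^3$ edges, equivalently that $U$ can be refined so that its edge-count is negligible and that $D = O(\epsilon^{3/2})n^2$ in the relevant norm. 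Once these contributions are absorbed, the convexity/integrality bookkeeping of the second paragraph goes through and gives the stated inequality; the easy complementary point, that unbalancing the parts only decreases the bound, is transparent from the $-\tfrac{n}{k}D$ term.
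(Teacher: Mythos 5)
There is a genuine gap: the proposal is a plan whose central quantitative step is explicitly deferred rather than proved. You correctly observe that black-boxing the stability machinery (Lemmas~\ref{LEMMA:cancellative-3-graph-stability-decompose-vertex-set} and~\ref{LEMMA:k-partite-cancellative-3-graph-stability}) only gives errors of order $\epsilon n^3$ --- the exceptional set $U$, the bad edges $\mathcal{B}$, and the $O(\epsilon^{1/2})$ slack in the caps $\sigma_T\le 1+O(\epsilon^{1/2})$ each threaten to wipe out the gain $-\frac{k-1}{4k^2}\epsilon n^3$ --- and you then state that ``the real work is a self-improvement'' showing these contributions are $O(\epsilon^{3/2}n^3)$, without carrying it out. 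That deferred work is essentially the whole content of the paper's proof (Lemmas~\ref{LEMMA:edges-in-S-blowup-3-graph}--\ref{LEMMA:cancelltive-pair-G-k-partite-upper-bound} and the final accounting in Theorem~\ref{THM:application-stability-left-is-small}), so what you have is a framework plus an acknowledgment of the missing argument, not a proof. Moreover, the Loomis--Whitney/convexity bookkeeping in your second paragraph only delivers the coefficient $\frac{k-1}{4k^2}$ under the exact caps $\sigma_T\le 1$: with the caps $1+O(\epsilon^{1/2})$ you actually have, the vertex of the polytope overshoots the target by $\Theta(\epsilon^{1/2}n^3)$, so even that step is conditional on the same unproved refinement.

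The deeper issue is that the refinement you propose aims at the wrong statement. The paper never shows that the edges meeting $U$ (or the bad edges) are $O(\epsilon^{3/2}n^3)$, nor that the imbalance satisfies $D=O(\epsilon^{3/2})n^2$ --- indeed imbalance of order $\epsilon^{1/2}n$ per part is fully compatible with near-extremality, and Lemma~\ref{LEMMA:edges-in-S-blowup-3-graph} shows its effect is a cubic term plus a cross term $\epsilon\delta$, absorbed into $O(\epsilon^{3/2}n^3)$. Instead of negligibility, the paper runs a first-order \emph{exchange} against the shadow budget: every edge with one vertex in $U$ uses two shadow pairs incident with $U$ and each such pair has codegree at most $\frac{n}{k}+O(\epsilon^{1/2})n$, so $|\mathcal{E}_1|\le \frac12|(\partial\mathcal{H})[V,U]|\bigl(\frac1k+3\epsilon_1^{1/2}\bigr)n$, while $|(\partial\mathcal{H})[V,U]|\le(\epsilon_1-\epsilon)\frac{k-1}{2k}n^2$ where $\epsilon_1\ge\epsilon$ is the shadow deficiency of the induced $k$-partite part; the induced part then obeys the bound with $\epsilon_1$ in place of $\epsilon$ (Lemma~\ref{LEMMA:cancelltive-pair-G-k-partite-upper-bound}), and the gain $\frac{k-1}{4k^2}\epsilon_1 n^3$ cancels the loss $\frac{k-1}{4k^2}(\epsilon_1-\epsilon)n^3$ up to $O(\epsilon^{3/2}n^3)$. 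The same ledger, via the codegree split $G_\ell$, $G_{\ell}'$ and the classes $\mathcal{B}_1,\mathcal{B}_2,\mathcal{B}_1'(a),\mathcal{B}_1'(b),\mathcal{B}_2'$ with a case distinction on $\epsilon_1'$, is what makes Lemma~\ref{LEMMA:cancelltive-pair-G-k-partite-upper-bound} work. So nothing has to be negligible at order $\epsilon$; without this exchange (or a worked-out substitute), your outline does not close, and with it, the per-triple convexity computation you describe becomes unnecessary.
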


{\bf Remarks.}
\begin{itemize}
\item[(a)]
If $\epsilon>0$ is sufficiently small, then
$|\mathcal{H}| \le \frac{k-1}{6k^2}n^3 - \frac{k-1}{4k^2}\epsilon n^3+ O(\epsilon^{3/2}n^3)
< \frac{k-1}{6k^2}n^3 - \frac{k-1}{6k^2}\epsilon n^3$.
This shows that $g(\mathcal{T}_{3},(k-1)/k-\epsilon) \le (k-1)/k^2 - \epsilon (k-1)/k^2$ for sufficiently small $\epsilon>0$.
\item[(b)]
The other part of Theorem~\ref{THM:local-max-cancelltive-3-fesible-region},
namely $g(\mathcal{T}_{3},(k-1)/k+\epsilon) \le (k-1)/k^2 - \delta$,
follows from inequality $(\ref{equ:fesible-region-cancel-3-right})$ and the fact that $x(1-x)$ is decreasing when $x>1/2$
(see Figure~\ref{fig:feasible-region-cancellative-3}).
\end{itemize}

Before proving Theorem~\ref{THM:local-max-cancelltive-3-fesible-region} let us present some useful lemmas.

\begin{lemma}\label{LEMMA:edges-in-S-blowup-3-graph}
Let $k\in 6\mathbb{N} + \{1,3\}$ and $k\ge 3$.
Let $\epsilon>0$ be a sufficiently small constant and $n$ be a sufficiently large constant.
Suppose that $\mathcal{H}$ is a $3$-graph on $n$ vertices with $|\partial\mathcal{H}| = (1-\epsilon)\frac{k-1}{2k}n^2$,
and $\mathcal{H}$ is a blowup of $\mathcal{S}$ for some $\mathcal{S} \in {\rm STS}(k)$.
Then $|\mathcal{H}| \le \frac{k-1}{6k^3}n^3 - \frac{k-1}{2k^2}\epsilon n^3 + 9\epsilon^{3/2}k^3n^3$.
\end{lemma}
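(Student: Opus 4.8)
The plan is to compute $|\mathcal{H}|$ exactly from the part sizes of the blowup and then expand around the balanced case. Write $\mathcal{H}$ as a blowup of $\mathcal{S}\in{\rm STS}(k)$ with parts $V_1,\dots,V_k$ of sizes $n_1,\dots,n_k$, so $n_1+\cdots+n_k=n$. Since $\mathcal{S}$ is a Steiner triple system, every pair $\{i,j\}$ with $i\neq j$ lies in exactly one triple of $\mathcal{S}$, so no edge of $\mathcal{H}$ meets a part twice and $\partial\mathcal{H}$ is contained in the full join $\bigcup_{i<j}(V_i\times V_j)$. Hence the hypothesis $|\partial\mathcal{H}|=(1-\epsilon)\frac{k-1}{2k}n^2$ already forces $\sum_i n_i^2\le\bigl(\tfrac1k+\tfrac{k-1}{k}\epsilon\bigr)n^2$, which for $\epsilon$ small makes every part of size $\frac nk(1+o(1))$ and in particular nonempty; once no part is empty, $\partial\mathcal{H}$ equals the full join and the inequality becomes the identity
\[
\sum_{i=1}^k n_i^2=\left(\frac1k+\frac{k-1}{k}\,\epsilon\right)n^2.
\]

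Next I would write $n_i=\frac nk(1+\delta_i)$, so that $\sum_i\delta_i=0$ and the displayed identity becomes $\sum_i\delta_i^2=k(k-1)\epsilon$; in particular $\max_i|\delta_i|\le\sqrt{k(k-1)\epsilon}\le k\sqrt\epsilon$. Then I expand
\[
|\mathcal{H}|=\sum_{\{i,j,l\}\in\mathcal{S}}n_in_jn_l
=\frac{n^3}{k^3}\sum_{\{i,j,l\}\in\mathcal{S}}(1+\delta_i)(1+\delta_j)(1+\delta_l)
\]
and split the summand into its degree-$0$, degree-$1$, degree-$2$ and degree-$3$ parts. The heart of the argument is evaluating the first three sums using the combinatorics of $\mathcal{S}$: the degree-$0$ contribution is $|\mathcal{S}|=k(k-1)/6$; since every vertex of $\mathcal{S}$ lies in exactly $(k-1)/2$ triples, the degree-$1$ contribution is $\frac{k-1}{2}\sum_i\delta_i=0$; and since every pair lies in exactly one triple, the degree-$2$ contribution is $\sum_{i<j}\delta_i\delta_j=\tfrac12\!\left((\sum_i\delta_i)^2-\sum_i\delta_i^2\right)=-\tfrac12 k(k-1)\epsilon$. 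Collecting terms yields
\[
|\mathcal{H}|=\frac{k-1}{6k^2}\,n^3-\frac{k-1}{2k^2}\,\epsilon\,n^3+\frac{n^3}{k^3}\,E,
\qquad E:=\sum_{\{i,j,l\}\in\mathcal{S}}\delta_i\delta_j\delta_l,
\]
exhibiting the main term (which matches Observation~\ref{OBS:blowup-Steiner-is-cancellative} at $\epsilon=0$) and the linear-in-$\epsilon$ correction, with $E$ the only leftover error.

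It then remains only to bound the cubic error, and here the trivial estimate suffices: $|E|\le|\mathcal{S}|\cdot\max_i|\delta_i|^3\le\frac{k(k-1)}{6}\,(k\sqrt\epsilon)^3\le k^6\epsilon^{3/2}$, so $\frac{n^3}{k^3}|E|\le k^3\epsilon^{3/2}n^3$, comfortably inside the claimed $O(\epsilon^{3/2})$ term. (If one wants a sharper constant, one can instead write $|E|\le\max_i|\delta_i|\cdot\sum_{i<j}|\delta_i\delta_j|$ and use $\sum_i|\delta_i|\le\sqrt k\,(\sum_i\delta_i^2)^{1/2}$, but this is not needed.) I do not anticipate any real obstacle: the argument is essentially an exact computation plus three standard facts about Steiner triple systems (edge count $k(k-1)/6$, each vertex in $(k-1)/2$ triples, each pair in one triple). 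The only points that need care are verifying that the shadow of a blowup of an STS is the full join of the parts once no part is empty, checking that small $\epsilon$ indeed forces the parts to be near-balanced and nonempty, and keeping the STS bookkeeping straight in the degree-$1$ and degree-$2$ sums.
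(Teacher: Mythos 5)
Your proof is correct, but it takes a genuinely different route from the paper. The paper's proof fixes the smallest part, sets $\delta$ so that this part has size $(1/k-\delta)n$ (quoting Claim~\ref{claim-Vi-up-low-bound} to get $\delta<2\epsilon^{1/2}$), carves out a balanced sub-blowup $V'$ with all parts of that size, and counts edges according to how many vertices they have in the leftover set $R$, using the shadow count to determine $|(\partial\mathcal{H})[R]|$ and bounding $|\mathcal{H}[R]|\le(k\delta n)^3$ crudely. You instead expand the exact formula $|\mathcal{H}|=\sum_{\{i,j,l\}\in\mathcal{S}}n_in_jn_l$ around the balanced point with $n_i=\frac{n}{k}(1+\delta_i)$, and the three regularity facts about an ${\rm STS}$ ($|\mathcal{S}|=k(k-1)/6$, each vertex in $(k-1)/2$ triples, each pair in exactly one triple) together with the shadow hypothesis pin $\sum_i\delta_i^2=k(k-1)\epsilon$ exactly, leaving only the cubic term $E$ as error. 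This buys you an exact identity $|\mathcal{H}|=\frac{k-1}{6k^2}n^3-\frac{k-1}{2k^2}\epsilon n^3+\frac{n^3}{k^3}E$ with the trivial bound $|E|\le k^6\epsilon^{3/2}$, hence an error of $k^3\epsilon^{3/2}n^3$, slightly sharper than the paper's $9k^3\epsilon^{3/2}n^3$; your argument is also self-contained (you rederive near-balancedness rather than invoking Claim~\ref{claim-Vi-up-low-bound}) and it makes explicit the point, glossed over in the paper, that the shadow is the complete $k$-partite graph only once every part is nonempty. One remark: the main term you obtain is $\frac{k-1}{6k^2}n^3$, not the $\frac{k-1}{6k^3}n^3$ printed in the statement; the latter is evidently a typo (a balanced blowup alone has about $\frac{k-1}{6k^2}n^3$ edges, and the lemma is applied with the constant $\frac{k-1}{6k^2}$ in the proof of Lemma~\ref{LEMMA:edges-in-S-colorable-3-graph}), and the paper's own computation likewise yields $\frac{k-1}{6k^2}n^3$, so the version you proved is the intended one.
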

\begin{proof}[Proof of Lemma~\ref{LEMMA:edges-in-S-blowup-3-graph}]
Let $V= V(\mathcal{H})$ and $V = V_{1}\cup \cdots \cup V_k$ be a partition such that $\mathcal{H}$ equals the blowup
$\mathcal{S}[V_1,\ldots,V_{k}]$ of $\mathcal{S}$.
Without loss of generality we may assume that $|V_1|\ge \cdots \ge |V_{k}|$.
Let $\delta = |V_{k}|/n - 1/k \ge 0$ and notice from Claim~\ref{claim-Vi-up-low-bound} that $\delta< 2\epsilon^{1/2}$.
For every $i\in[k]$ fix a subset $V_i'\subset V_i$ of size exactly $(1/k-\delta)n$ (note that $V_{k}' = V_{k}$).
Let $V' = V_1'\cup \cdots \cup V_{k}'$ and $R = V\setminus V'$.
Notice that $|V'| = k(1/k-\delta)n = n-k\delta n$ and $|R| = k\delta n$.
Since the induced subgraph $\mathcal{H}[V']$ is a balanced blowup of $\mathcal{S}$, we have
\begin{align}
|\mathcal{H}[V']| = \frac{1}{3}\binom{k}{2}\left(\frac{1}{k}-\delta\right)^3n^3. \notag
\end{align}
Since the induced subgraph of $\partial\mathcal{H}$ on $V'$ has size $\binom{k}{2}\left(1/k-\delta\right)^2n^2$
and every vertex in $R$ has exactly $(k-1)\left(1/k-\delta\right)n$ neighbors in $V'$,
the size of the induced subgraph of $\partial\mathcal{H}$ on $R$ satisfies
\begin{align}
|(\partial\mathcal{H})[R]|
= |\partial\mathcal{H}| - \frac{k-1}{2k}\left(\frac{1}{k}-\delta\right)^2n^2
     - k\delta n \cdot (k-1)\left(\frac{1}{k}-\delta\right)n
= \binom{k}{2}\delta^2 n^2 - \frac{k-1}{2k}\epsilon n^2. \notag
\end{align}
For $i\in\{1,2\}$ let $\mathcal{E}_{i}$ denote the set of edges in $\mathcal{H}$ that have exactly $i$ vertices in $V'$.
Notice that for every vertex $v\in R$ the induced subgraph of $L_{\mathcal{H}}(v)$ on $V'$ consists of $(k-1)/2$
balanced complete bipartite graphs and each of them has $2(1/k-\delta)n$ vertices.
Therefore,
\begin{align}
|\mathcal{E}_{1}|
= |R|\cdot \frac{k-1}{2}\left(\frac{1}{k}-\delta\right)^2 n^2
= \binom{k}{2}\left(\frac{1}{k}-\delta\right)^2 \delta n^3. \notag
\end{align}
On the other hand, since every pair $uv\in (\partial\mathcal{H})[R]$ satisfies $|N_{\mathcal{H}}(uv)\cap V'| = |V_{i}'| = (1/k-\delta)n$
for some unique $i\in [k]$, we obtain
\begin{align}
|\mathcal{E}_{2}|
= |(\partial\mathcal{H})[R]| \cdot \left(\frac{1}{k}-\delta\right) n
= \left(\binom{k}{2}\delta^2 - \frac{k-1}{2k}\epsilon\right)\left(\frac{1}{k}-\delta\right) n^3. \notag
\end{align}
Therefore,
\begin{align}
|\mathcal{H}|
& = |\mathcal{H}[V']| + |\mathcal{E}_{1}| + |\mathcal{E}_{2}| + |\mathcal{H}[R]| \notag\\
& = \frac{1}{3}\binom{k}{2}\left(\frac{1}{k}-\delta\right)^3n^3
    + \binom{k}{2}\left(\frac{1}{k}-\delta\right)^2 \delta n^3
    + \left(\binom{k}{2}\delta^2 - \frac{k-1}{2k}\epsilon\right)\left(\frac{1}{k}-\delta\right) n^3
    + (\delta k n)^3 \notag\\
& = \frac{k-1}{6k^3}n^3 - \frac{k}{6}\delta^3 n^3 -\frac{k-1}{2k^2}\epsilon n^3 + \frac{k-1}{2k}\epsilon\delta n^3 + \delta^3 k^3 n^3. \notag
\end{align}
Since $\delta < 2 \epsilon^{1/2}$, we obtain
$|\mathcal{H}| \le  \frac{k-1}{6k^3}n^3 - \frac{k-1}{2k^2}\epsilon n^3 + 9\epsilon^{3/2}k^3n^3$.
\end{proof}

The next lemma extends Lemma~\ref{LEMMA:edges-in-S-blowup-3-graph} from blowups of $\mathcal{S}$ to $\mathcal{S}$-colorable $3$-graphs.

\begin{lemma}\label{LEMMA:edges-in-S-colorable-3-graph}
Let $k\in 6\mathbb{N} + \{1,3\}$ and $k\ge 3$.
Let $\epsilon>0$ be a sufficiently small constant and $n$ be a sufficiently large constant.
Suppose that $(G,\mathcal{H})$ is a cancellative pair on a set $V$ of size $n$,
$V = V_{1}\cup \cdots \cup V_k$ is a partition such that $G$ is a $k$-partite graph with parts $V_1,\ldots,V_k$,
and $\mathcal{H}$ is a subgraph of the blowup $\mathcal{S}[V_1,\ldots,V_{k}]$ of some $\mathcal{S} \in {\rm STS}(k)$.
If $|G| = (1-\epsilon)\frac{k-1}{2k}n^2$,
then $|\mathcal{H}| \le \frac{k-1}{6k^2}n^3 - \frac{k-1}{4k^2}\epsilon n^3 + 2\epsilon^{3/2}n^{3}$.
\end{lemma}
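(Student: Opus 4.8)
The plan is to decompose $\mathcal H$ along the triples of $\mathcal S$, bound each $3$-partite piece by the Loomis--Whitney inequality, and then compare the total with the full blowup $\widehat{\mathcal H}=\mathcal S[V_1,\dots,V_k]$, whose size is controlled by Lemma~\ref{LEMMA:edges-in-S-blowup-3-graph}. Only two consequences of the hypotheses are needed: $\partial\mathcal H\subseteq G$, and $G$ is $k$-partite with the same parts $V_1,\dots,V_k$ that colour $\mathcal H$. Set $n_i=|V_i|$. Since $G$ is a subgraph of the complete $k$-partite graph on $V_1,\dots,V_k$ we have $\sum_{i<j}n_in_j\ge|G|=(1-\epsilon)\frac{k-1}{2k}n^2$, so a standard convexity argument (cf. \cite{LMR2} and Claim~\ref{claim-Vi-up-low-bound}) gives $|n_i-n/k|<\epsilon^{1/2}n$ for every $i$; writing $\sum_{i<j}n_in_j=(1-\epsilon_1)\frac{k-1}{2k}n^2$ we also get $0\le\epsilon_1\le\epsilon$. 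For $T=\{a,b,c\}\in\mathcal S$ let $\mathcal H_T$ be the set of edges of $\mathcal H$ coloured $T$; this is a $3$-partite $3$-graph between $V_a,V_b,V_c$ and $|\mathcal H|=\sum_T|\mathcal H_T|$. Because $\mathcal S$ is a Steiner system, every pair $\{i,j\}$ lies in a unique triple, so the three bipartite shadows of the $\mathcal H_T$ are pairwise disjoint; writing $p^T_{ij}$ for the size of the $(i,j)$-shadow of $\mathcal H_T$ and $q^T_{ij}=n_in_j-p^T_{ij}\ge0$, and using $|\partial\mathcal H|\le|G|$, we obtain $\sum_T\sum_{\{i,j\}\subseteq T}q^T_{ij}=\sum_{i<j}n_in_j-|\partial\mathcal H|\ge(\epsilon-\epsilon_1)\frac{k-1}{2k}n^2$.

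The core step is the Loomis--Whitney inequality $|\mathcal H_T|^2\le p^T_{ab}p^T_{ac}p^T_{bc}$. Combined with the elementary bound $\sqrt{(1-s)(1-t)(1-u)}\le 1-\tfrac12(s+t+u)+\tfrac12(s+t+u)^2$ for $s,t,u\in[0,1]$, it gives, for each $T$,
\[
|\mathcal H_T|\ \le\ n_an_bn_c-\tfrac12\bigl(n_cq^T_{ab}+n_bq^T_{ac}+n_aq^T_{bc}\bigr)+\tfrac12\,n_an_bn_c\Bigl(\tfrac{q^T_{ab}}{n_an_b}+\tfrac{q^T_{ac}}{n_an_c}+\tfrac{q^T_{bc}}{n_bn_c}\Bigr)^{2}.
\]
Summing over $T\in\mathcal S$, bounding $\min_i n_i\ge n/k-\epsilon^{1/2}n$ in the middle term, using $\sum_T n_an_bn_c=|\widehat{\mathcal H}|$ and the shadow-deficit inequality above, and writing $\mathcal E$ for the sum of the quadratic error terms, one gets
\[
|\mathcal H|\ \le\ |\widehat{\mathcal H}|-\frac{k-1}{4k^2}(\epsilon-\epsilon_1)n^3+\mathcal E+O(\epsilon^{3/2}n^3).
\]
Now $|\widehat{\mathcal H}|$ is the size of a full blowup of $\mathcal S$ whose shadow has $(1-\epsilon_1)\frac{k-1}{2k}n^2$ edges, so Lemma~\ref{LEMMA:edges-in-S-blowup-3-graph} (applied with $\epsilon_1$ in place of $\epsilon$, recalling $\epsilon_1\le\epsilon$) gives $|\widehat{\mathcal H}|\le\frac{k-1}{6k^2}n^3-\frac{k-1}{2k^2}\epsilon_1 n^3+O(\epsilon^{3/2}n^3)$. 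Substituting and using $\frac{k-1}{2k^2}\ge\frac{k-1}{4k^2}$ together with $\epsilon_1\ge0$ collapses the estimate to $|\mathcal H|\le\frac{k-1}{6k^2}n^3-\frac{k-1}{4k^2}\epsilon n^3+O(\epsilon^{3/2}n^3)$, provided $\mathcal E=O(\epsilon^{3/2}n^3)$.

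Thus the only real obstacle is the bound $\mathcal E=O(\epsilon^{3/2}n^3)$: a single bipartite shadow $p^T_{ij}$ that is severely depleted makes the corresponding squared term of order $1$, so a crude estimate fails. I would handle this by a dichotomy on $Q_T:=q^T_{ab}+q^T_{ac}+q^T_{bc}$. If $Q_{T_0}\ge\epsilon^{3/4}n^2$ for some $T_0$, then taking the largest of its three $q^{T_0}_{ij}$ and applying Loomis--Whitney with the crude caps $p^{T_0}_{ij}\le n_in_j$ shows that the edge deficit over $T_0$ alone is $\Omega(\epsilon^{3/4}n^3)$, which already forces $|\mathcal H|<\frac{k-1}{6k^2}n^3-\frac{k-1}{4k^2}\epsilon n^3$ for small $\epsilon$; otherwise every $q^T_{ij}/(n_in_j)=O(\epsilon^{3/4})$, whence $\mathcal E\le O(\epsilon^{3/4})\sum_T(n_cq^T_{ab}+n_bq^T_{ac}+n_aq^T_{bc})=O(\epsilon^{3/4})\cdot O(n)\cdot\sum_TQ_T=O(\epsilon^{3/4}\cdot n\cdot\epsilon^{3/4}n^2)=O(\epsilon^{3/2}n^3)$, since $\mathcal S$ has only $k(k-1)/6$ triples. (When $k=3$ there is a single triple and one can bypass all of this by quoting inequality $(\ref{equ:fesible-region-cancel-3-left})$ directly.) The remaining ingredients — the elementary inequality, the $O(\epsilon^{3/2}n^3)$ bookkeeping (including the $\epsilon^{1/2}n$ correction to $\min_i n_i$), and the fact that the cancellative-pair hypothesis is used only through $\partial\mathcal H\subseteq G$ — are routine.
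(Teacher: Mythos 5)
Your proposal is correct in outline but takes a genuinely different route from the paper. The paper never decomposes $\mathcal{H}$ along the triples of $\mathcal{S}$ and never invokes Loomis--Whitney: it works directly with the full blowup $\widehat{\mathcal{S}}=\mathcal{S}[V_1,\ldots,V_k]$, whose shadow is the complete $k$-partite graph of size $(1-\delta)\tfrac{k-1}{2k}n^2$ (this $\delta$ is exactly your $\epsilon_1$), and observes that since $\partial\mathcal{H}\subseteq G$, every edge of $\widehat{\mathcal{S}}$ containing a pair of $\partial\widehat{\mathcal{S}}\setminus G$ is automatically absent from $\mathcal{H}$. Double counting the $\widehat{\mathcal{S}}$-degrees of the $(\epsilon-\delta)\tfrac{k-1}{2k}n^2$ missing pairs (each degree at least $n/k-2\epsilon^{1/2}n$), and discounting edges counted twice or three times --- the triply-missing ones being at most $\epsilon^{3/2}n^3$ by inequality~(\ref{equ:fesible-region-cancel-3-left}) --- gives a deficit of at least $(\epsilon-\delta)\tfrac{k-1}{4k^2}n^3-2\epsilon^{3/2}n^3$, which combined with Lemma~\ref{LEMMA:edges-in-S-blowup-3-graph} yields the bound with the explicit constant $2$. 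Your route arrives at the same factor $\tfrac12$ via the square root in Loomis--Whitney rather than via the at-most-two-missing-pairs count; your elementary inequality $\sqrt{(1-s)(1-t)(1-u)}\le 1-\tfrac12(s+t+u)+\tfrac12(s+t+u)^2$ is valid on $[0,1]^3$, the disjointness of the bipartite shadows across triples is correct because every pair of indices lies in a unique triple of $\mathcal{S}$, and the dichotomy on $Q_T$ does repair the only weak point (a single badly depleted bipartite shadow blowing up the quadratic term), with Case A absorbed by the $\Omega(\epsilon^{3/4}n^3)$ deficit and Case B giving $\mathcal{E}=O_k(\epsilon^{3/2}n^3)$.

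The one caveat is quantitative: your error terms (the $\min_i n_i$ correction and the Case-B bound on $\mathcal{E}$, which involves the $k(k-1)/6$ triples and factors of $k^2$ from $s_{ij}\le q^T_{ij}/(n/k-\epsilon^{1/2}n)^2$) carry $k$-dependent constants, so as written you prove the lemma with $C(k)\epsilon^{3/2}n^3$ in place of the stated $2\epsilon^{3/2}n^{3}$. This is harmless for the paper's applications, since Lemma~\ref{LEMMA:cancelltive-pair-G-k-partite-upper-bound} and Theorem~\ref{THM:application-stability-left-is-small} tolerate $k$-dependent constants of that shape, but if you want the literal constant $2$ you should replace the Loomis--Whitney step by the paper's missing-pair count, which avoids the quadratic error and the dichotomy altogether. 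What your approach buys in exchange is a per-triple multiplicative estimate that quantifies the loss triple by triple rather than globally.
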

\begin{proof}[Proof of Lemma~\ref{LEMMA:edges-in-S-colorable-3-graph}]
Let $\widehat{\mathcal{S}} = \mathcal{S}[V_1,\ldots,V_{k}]$.
Since $|G| = (1-\epsilon)\frac{k-1}{2k}n^2$,
it follows from Claim~\ref{claim-Vi-up-low-bound} that $|V_{i}| \ge n/k-2\epsilon^{1/2} n$ for every $i\in[k]$.
So $N_{\widehat{\mathcal{S}}}(uv) \ge n/k-2\epsilon^{1/2} n$ for every $uv\in \partial\widehat{\mathcal{S}}$.
Let $\delta \ge 0$ be the real number that satisfies
\begin{align}
|\partial\widehat{\mathcal{S}}| = (1-\delta)\frac{k-1}{2k}n^2. \notag
\end{align}
Note that $\delta \le \epsilon$ and $|\partial\widehat{\mathcal{S}}\setminus G| = (\epsilon-\delta)\frac{k-1}{2k}n^2$.
Define
\begin{align}
\mathcal{E} = \left\{E\in\widehat{\mathcal{S}}\colon \exists \{u,v\}\in \partial\widehat{\mathcal{S}}\setminus G
\text{ such that } \{u,v\}\subset E\right\}. \notag
\end{align}
Let us partition $\mathcal{E}$ into three subsets $\mathcal{E}_{1}$, $\mathcal{E}_{2}$, and $\mathcal{E}_{3}$,
where every edge $E \in \mathcal{E}_{i}$ satisfies that $|\binom{E}{2} \setminus G| = i$ for $i\in[3]$.
Since every pair $uv\in \partial\widehat{\mathcal{S}}$ satisfies
$N_{\widehat{\mathcal{S}}}(uv) \ge n/k-2\epsilon^{1/2} n$,
we have
\begin{align}
|\mathcal{E}_{1}| + 2|\mathcal{E}_{2}| + 3|\mathcal{E}_{3}|
= \sum_{uv\in \partial\widehat{\mathcal{S}}\setminus G}d_{\widehat{\mathcal{S}}}(uv)
& \ge |\partial\widehat{\mathcal{S}}\setminus G| \left(\frac{n}{k}-2\epsilon^{1/2} n\right) \notag\\
& =  (\epsilon-\delta)\frac{k-1}{2k}\left(\frac{1}{k}-2\epsilon^{1/2}\right)n^3. \notag
\end{align}
On the other hand, notice that
\begin{align}
|\mathcal{E}|
= |\mathcal{E}_{1}| + |\mathcal{E}_{2}| + |\mathcal{E}_{3}|
& \ge \frac{1}{2}\left(|\mathcal{E}_{1}| + 2|\mathcal{E}_{2}| + 3|\mathcal{E}_{3}|\right) - \mathcal{E}_{3}, \notag
\end{align}
and by inequality $(\ref{equ:fesible-region-cancel-3-left})$, we have
$|\mathcal{E}_{3}| \le \left(\partial\widehat{\mathcal{S}}\setminus G\right)^{3/2} \le \epsilon^{3/2}n^{3}$.
Therefore,
\begin{align}
|\mathcal{E}|
\ge \frac{1}{2}(\epsilon-\delta)\frac{k-1}{2k}\left(\frac{1}{k}-2\epsilon^{1/2}\right)n^3 - \epsilon^{3/2}n^{3}
\ge (\epsilon-\delta)\frac{k-1}{4k^2}n^3 - 2\epsilon^{3/2}n^{3}. \notag
\end{align}
We may view $\mathcal{H}$ as a $3$-graph obtained from $\widehat{\mathcal{S}}$ by removing some edges.
In particular, since $\mathcal{E} \subset \widehat{\mathcal{S}}\setminus \mathcal{H}$, we have
$|\mathcal{H}| \le |\widehat{\mathcal{S}}| - |\mathcal{E}|$.
It follows from Lemma~\ref{LEMMA:edges-in-S-blowup-3-graph} that
$|\widehat{\mathcal{S}}| \le \frac{k-1}{6k^2}n^3 - \frac{k-1}{2k^2}\delta n^3 + 9\delta^{3/2}k^3 n^3$.
Therefore,
\begin{align}
|\mathcal{H}|
\le |\widehat{\mathcal{S}}| - |\mathcal{E}|
& \le \frac{k-1}{6k^2}n^3 - \frac{k-1}{2k^2}\delta n^3 + 9\delta^{3/2}k^3 n^3
   - \left((\epsilon-\delta)\frac{k-1}{4k^2}n^3 - 2\epsilon^{3/2}n^{3}\right) \notag\\
& = \frac{k-1}{6k^2}n^3 - \frac{k-1}{4k^2}\epsilon n^3 + 2\epsilon^{3/2}n^{3}
       - \left(\frac{k-1}{4k^2}\delta n^3  - 9\delta^{3/2}k^3 n^3\right) \notag\\
& \le \frac{k-1}{6k^2}n^3 - \frac{k-1}{4k^2}\epsilon n^3 + 2\epsilon^{3/2}n^{3}. \notag
\end{align}
\end{proof}

The next lemma is the key in the proof of Theorem~\ref{THM:application-stability-left-is-small}.

\begin{lemma}\label{LEMMA:cancelltive-pair-G-k-partite-upper-bound}
Let $k\in 6\mathbb{N} + \{1,3\}$ and $k\ge 3$.
Let $\epsilon>0$ be a sufficiently small constant and $n$ be a sufficiently large constant.
Suppose that $(G,\mathcal{H})$ be a cancellative pair on $n$ vertices,
$\mathcal{G}$ is a $k$-partite graph, and $|\mathcal{G}| = (1-\epsilon)\frac{k-1}{2k}n^2$.
Then $|\mathcal{H}| \le \frac{k-1}{6k^2}n^3 - \frac{k-1}{4k^2}\epsilon n^3 + 10^9 \epsilon^{3/2}k^9 n^3$.
\end{lemma}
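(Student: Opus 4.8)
The plan is to combine the stability statement of Lemma~\ref{LEMMA:k-partite-cancellative-3-graph-stability} with a refinement of Lemma~\ref{LEMMA:edges-in-S-colorable-3-graph}, the additional leverage being that the edges of $\mathcal{H}$ which do \emph{not} respect the Steiner structure must themselves consume missing pairs of $G$, of which there are only $O(\epsilon n^2)$. First we may assume $|\mathcal{H}| \ge \frac{k-1}{6k^2}n^3 - \frac{k-1}{4k^2}\epsilon n^3$, since otherwise there is nothing to prove; in particular $|\mathcal{H}| \ge (1-2\epsilon)\frac{k-1}{6k^2}n^3$, so with $\epsilon' := 2\epsilon$ the pair $(G,\mathcal{H})$ satisfies the hypotheses of Lemma~\ref{LEMMA:k-partite-cancellative-3-graph-stability}. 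Fix the $k$-partition $V = V_1 \cup \cdots \cup V_k$ witnessing that $G$ is $k$-partite; since $\partial\mathcal{H} \subseteq G$ and $G$ has no edge inside a part, every edge of $\mathcal{H}$ meets three distinct parts. By Lemma~\ref{LEMMA:k-partite-cancellative-3-graph-stability} there is $\mathcal{S} \in {\rm STS}(k)$, living on the index set $[k]$ of the parts, such that, writing $\widehat{\mathcal{S}} = \mathcal{S}[V_1,\ldots,V_k]$, the set $\mathcal{H}_2 := \mathcal{H} \setminus \widehat{\mathcal{S}}$ has $|\mathcal{H}_2| \le 600(2\epsilon)^{1/2}k^3 n^3$; write $\mathcal{H}_1 := \mathcal{H} \cap \widehat{\mathcal{S}}$, so that $\mathcal{H}_2$ consists exactly of the edges of $\mathcal{H}$ whose three parts form a non-edge of $\mathcal{S}$. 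Let $\delta \in [0,\epsilon]$ be defined by $|\partial\widehat{\mathcal{S}}| = \sum_{i<j}|V_i||V_j| = (1-\delta)\tfrac{k-1}{2k}n^2$, let $M_G := \partial\widehat{\mathcal{S}} \setminus G$ (so $|M_G| = (\epsilon-\delta)\tfrac{k-1}{2k}n^2$), and for a cross pair $ab$ let $m(ab)\in[k]$ be the unique part completing the two parts of $ab$ to an edge of $\mathcal{S}$.

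\emph{Step 1 (the Steiner part).} Since $(G,\mathcal{H}_1)$ is a cancellative pair and $\mathcal{H}_1 \subseteq \mathcal{S}[V_1,\ldots,V_k]$, the argument of Lemma~\ref{LEMMA:edges-in-S-colorable-3-graph} applies; keeping track also of the imbalance parameter $\delta$ it gives
\[
|\mathcal{H}_1| \;\le\; |\widehat{\mathcal{S}}| - |\mathcal{E}| \;\le\; \frac{k-1}{6k^2}n^3 - \frac{k-1}{4k^2}(\epsilon + \delta)n^3 + O\!\left(\epsilon^{3/2}k^3 n^3\right),
\]
where $\mathcal{E}$ is the set of edges of $\widehat{\mathcal{S}}$ containing a pair of $M_G$. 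Indeed $|\widehat{\mathcal{S}}| \le \tfrac{k-1}{6k^2}n^3 - \tfrac{k-1}{2k^2}\delta n^3 + 9\delta^{3/2}k^3 n^3$ by Lemma~\ref{LEMMA:edges-in-S-blowup-3-graph}, while $|\mathcal{E}| \ge \tfrac12\sum_{ab\in M_G}d_{\widehat{\mathcal{S}}}(ab) - |M_G|^{3/2} \ge (\epsilon-\delta)\tfrac{k-1}{4k^2}n^3 - O(\epsilon^{3/2}n^3)$, using $d_{\widehat{\mathcal{S}}}(ab) = |V_{m(ab)}| \ge n/k - 2\epsilon^{1/2}n$ (Claim~\ref{claim-Vi-up-low-bound}) and inequality $(\ref{equ:fesible-region-cancel-3-left})$ applied to the cancellative $3$-graph of edges of $\widehat{\mathcal{S}}$ all of whose pairs lie in $M_G$.

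\emph{Step 2 (the non-Steiner part).} It remains to prove $|\mathcal{H}_2| \le \tfrac{k-1}{4k^2}\delta n^3 + O(\epsilon^{3/2}k^9 n^3)$, so that adding the two estimates makes the $\tfrac{k-1}{4k^2}\delta n^3$ terms cancel and yields $|\mathcal{H}| \le \tfrac{k-1}{6k^2}n^3 - \tfrac{k-1}{4k^2}\epsilon n^3 + O(\epsilon^{3/2}k^9 n^3)$. The key consequence of cancellativity is: if $\{x,y,z\} \in \mathcal{H}_2$ and $x' \in N_{\mathcal{H}}(yz)\cap V_{m(yz)}$, then $\{x,y,z\}$ and $\{y,z,x'\}$ are distinct edges of $\mathcal{H}$ with $\{x,y,z\}\triangle\{y,z,x'\} = \{x,x'\}$, and $\{x,x'\}$ straddles two parts (since $x$'s part is not $m(yz)$), so $\{x,x'\}\in M_G$; hence any vertex $x$ lying in an edge $\{x,y,z\}\in\mathcal{H}_2$ satisfies $d_{M_G}(x) \ge |N_{\mathcal{H}}(yz)\cap V_{m(yz)}|$. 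Put $B := \{v : d_{M_G}(v) \ge \epsilon^{1/4}n\}$; then $|B| \le 2|M_G|/(\epsilon^{1/4}n) < \epsilon^{3/4}n$, so the number of $\mathcal{H}_2$-edges with at least two vertices in $B$ is $O(\epsilon^{3/2}n^3)$. Every other $\mathcal{H}_2$-edge $\{x,y,z\}$ has two vertices $x\notin B$, and for each the opposite pair satisfies $|N_{\mathcal{H}}(\text{opp})\cap V_{m(\cdot)}| < \epsilon^{1/4}n$, i.e.\ is ``Steiner-light''. One bounds the number of Steiner-light pairs from $\sum_{ab\in G}|N_{\mathcal{H}}(ab)\cap V_{m(ab)}| = 3|\mathcal{H}_1|$, the codegree bound $\Delta_2(\mathcal{H}) \le n/k + 3\epsilon^{1/2}n$ (Claim~\ref{CLAIM:size-common-neighbors}) and the crude bound on $|\mathcal{H}_2|$, and then bounds the number of remaining $\mathcal{H}_2$-edges using the concentration Claim~\ref{claim-cancel-3-N(uv)-concenrate} (which limits how many $\mathcal{H}_2$-edges can sit over one pair); feeding the improved bound back into the same estimate, now using that each Steiner-light pair meeting an $\mathcal{H}_1$-edge forces $\Omega(n/k)$ missing pairs against the budget $|M_G| \le \epsilon n^2$, drives this count below $O(\epsilon^{3/2}k^9 n^3)$. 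Tracking constants throughout gives the asserted bound $|\mathcal{H}| \le \tfrac{k-1}{6k^2}n^3 - \tfrac{k-1}{4k^2}\epsilon n^3 + 10^9\epsilon^{3/2}k^9 n^3$.

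\emph{Main obstacle.} The delicate point is Step~2: the stability lemma only controls $|\mathcal{H}_2|$ up to $O(\epsilon^{1/2}n^3)$, which is far too weak for the target $O(\epsilon^{3/2}n^3)$ error term, and the gain must come from a bootstrapping argument in which the rough structure supplied by Lemma~\ref{LEMMA:k-partite-cancellative-3-graph-stability} is used to show that the non-Steiner edges are forced to spend the (only $O(\epsilon n^2)$ many) missing pairs of $G$; carrying out this iteration while keeping the dependence on $k$ explicit is where the real work lies.
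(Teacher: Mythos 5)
Your Step 1 is fine (it is exactly the computation inside Lemma~\ref{LEMMA:edges-in-S-colorable-3-graph}, keeping the $\delta$-term), but Step 2 is where the whole difficulty of the lemma sits, and as sketched it does not work. The target inequality $|\mathcal{H}_2| \le \frac{k-1}{4k^2}\delta n^3 + O(\epsilon^{3/2}k^9 n^3)$ is not an absolute structural fact: take $k=7$, balanced parts $V_1,\dots,V_7$ of size $n/7$ (so $\delta=0$), a set $W\subset V_4$ with $|W|=3\epsilon n$, let $G$ be the complete $7$-partite graph minus all $V_3$--$W$ pairs, and let $\mathcal{H}$ be the Fano blowup minus all edges meeting $W$, plus all triples in $V_1\times V_2\times W$. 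One checks that every codegree neighbourhood is independent in $G$ (e.g.\ $N_{\mathcal{H}}(xy)=V_3\cup W$ for $x\in V_1,y\in V_2$), so the pair is cancellative, $|M_G|=\epsilon\frac{k-1}{2k}n^2$, yet $|\mathcal{H}_2|=\Theta(\epsilon n^3)\gg \epsilon^{3/2}n^3$. This configuration satisfies every ingredient you feed into Step 2 (the stability output, the missing-pair budget, Observation~\ref{OBS:cancellative-pair-co-neighbor-is-independent}, Claims~\ref{claim-Vi-up-low-bound}, \ref{CLAIM:size-common-neighbors}, \ref{claim-cancel-3-N(uv)-concenrate}); it only fails your standing assumption because $|\mathcal{H}_1|$ is far below its Step-1 bound. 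So no argument using only the facts you cite can prove Step 2: the lemma is true only because a large $\mathcal{H}_2$ forces a compensating deficit in $\mathcal{H}_1$ on the same (low-codegree) pairs, and your decomposition never couples the two quantities.

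Concretely, the bootstrap you describe also stalls quantitatively. Your count of ``Steiner-light'' pairs comes from $\sum_{ab}|N_{\mathcal{H}}(ab)\cap V_{m(ab)}|=3|\mathcal{H}_1|$ against $|G|\cdot\Delta_2(\mathcal{H})$, and the slack in that comparison is dominated by the error terms $3(\epsilon')^{1/2}n$ in the codegree bound and $2(\epsilon')^{1/2}n$ in the part sizes, independently of $|\mathcal{H}_2|$; hence the number of Steiner-light pairs can genuinely be $\Theta(\epsilon^{1/2}n^2)$, each can carry $\Theta(n/k)$ edges of $\mathcal{H}_2$, and feeding the estimate back into itself never gets below $\Theta(\epsilon^{1/2}n^3)$. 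The asserted rescue, ``each Steiner-light pair meeting an $\mathcal{H}_1$-edge forces $\Omega(n/k)$ missing pairs,'' is unjustified (a Steiner-light pair has, by definition, few co-neighbours in $V_{m(\cdot)}$, so cancellativity only produces few missing pairs per non-Steiner co-neighbour), and pairs with no $\mathcal{H}_1$-edge at all -- exactly the ones in the example above -- force none. The paper's proof avoids trying to bound $|\mathcal{H}_2|$ in absolute terms: it splits pairs by codegree ($G_{\ell}$, $G_{\ell}'$), charges edges over low-codegree pairs at rate $n/(50k)$ or $n/(100k)$ against the density bookkeeping ($\epsilon_1,\epsilon_1',\epsilon_2$), uses inequality~$(\ref{equ:fesible-region-cancel-3-left})$ for edges supported entirely on low-codegree pairs, bounds the remaining bad edges through the missing-pair budget, and applies Lemma~\ref{LEMMA:edges-in-S-colorable-3-graph} only to the high-codegree restriction $(G_{\ell}\cap G_{\ell}',\mathcal{H}_{\ell}'')$, so that the loss from non-Steiner edges is automatically absorbed into the deficit of the shadow. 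Some version of that joint accounting is what your Step 2 is missing.
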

\begin{proof}[Proof of Lemma~\ref{LEMMA:cancelltive-pair-G-k-partite-upper-bound}]
Let $V = V(G) = V(\mathcal{H})$.
Let $V = V_1\cup \cdots \cup V_{k}$ be a partition such that $G$ is a subgraph of the
complete $k$-partite graph $\widehat{G}$ with parts $V_1,\ldots, V_{k}$.
Suppose to the contrary that
$|\mathcal{H}| \ge \frac{k-1}{6k^2}n^3 - \frac{k-1}{4k^2}\epsilon n^3 + 10^9 \epsilon^{3/2}k^9 n^3 > (1-2\epsilon)\frac{k-1}{6k^2}n^3$.
Then by Lemma~\ref{LEMMA:k-partite-cancellative-3-graph-stability},
$\mathcal{H}$ contains a subgraph $\mathcal{H}'$ of size at least $|\mathcal{H}| - \delta n^3$,
where $\delta = 600(2\epsilon)^{1/2}k^3 n^3$,
such that $\mathcal{H}'$ is a subgraph of the blowup $\widehat{\mathcal{S}} = \mathcal{S}[V_1,\ldots,V_{k}]$
of some $\mathcal{S} \in {\rm STS}(k)$.
We may assume that $\mathcal{H}' = \mathcal{H}\cap \widehat{\mathcal{S}}$
(otherwise we can replace $\mathcal{H}'$ by $\mathcal{H}\cap \widehat{\mathcal{S}}$).
Let $M_{G} = \widehat{G}\setminus G$ and call members in $M_{G}$ missing edges of $G$.
Note that
$$|M_{G}| = |\widehat{G}| - |G| \le \frac{k-1}{2k}n^2 - (1-\epsilon)\frac{k-1}{2k}n^2 < \frac{\epsilon}{2}n^2.$$
Define
\begin{align}
G_{\ell}'
= \left\{uv\in \partial\mathcal{H}'\colon d_{\mathcal{H}'}(uv) \ge \frac{n}{100k}\right\},
\quad{\rm and}\quad
\mathcal{H}''
= \left\{E\in \mathcal{H}'\colon \binom{E}{2}\subset G_{\ell}'\right\}. \notag
\end{align}

Let $\epsilon_1'\ge 0$ be the real number such that
$|G_{\ell}'| = (1-\epsilon_1')\frac{k-1}{2k}n^2$.
Note that $G_{\ell}' \subset G \subset \widehat{G}$ and  $\epsilon_1' \ge \epsilon$.

\begin{claim}\label{CLAIM:G-large-is-almost-all}
We have $|G_{\ell}'| \ge \frac{k-1}{2k}n^2 - 8\delta kn^2$.
In other words, $\epsilon_1' \le 16 \frac{k^2}{k-1} \delta$.
\end{claim}
\begin{proof}[Proof of Claim~\ref{CLAIM:G-large-is-almost-all}]
Suppose to the contrary that $|G_{\ell}'| < \frac{k-1}{2k}n^2 - 8\delta kn^2$.
Then
\begin{align}
3|\mathcal{H}'|
= \sum_{uv\in \partial\mathcal{H}'}N_{\mathcal{H}'}(uv)
& \le  |G_{\ell}'|\left(\frac{n}{k}+3\epsilon^{1/2}n\right) + |\partial\mathcal{H}'\setminus G_{\ell}'|\frac{n}{100k} \notag\\
& \le  \left(\frac{k-1}{2k}n^2 - 8\delta kn^2\right)\left(\frac{n}{k}+3\epsilon^{1/2}n\right) + 8\delta kn^2 \cdot \frac{n}{100k} \notag\\
& \le  \frac{k-1}{2k^2}n^3 + 3\epsilon^{1/2}n^3 - 8\delta kn^2\left(\frac{n}{k}-\frac{n}{100k}\right) \notag\\
& \le  \frac{k-1}{2k^2}n^3 - 5\delta n^3 + 3\epsilon^{1/2}n^3
< \frac{k-1}{2k^2}n^3 - 4\delta n^3, \notag
\end{align}
which contradicts $|\mathcal{H}'| \ge |\mathcal{H}| - \delta n^3$
and $|\mathcal{H}| \ge \frac{k-1}{6k^2}n^3 - \frac{k-1}{4k^2}\epsilon n^3 + 10^9 \epsilon^{3/2}k^9 n^3$.
\end{proof}

We will consider two cases depending on the value of $\epsilon_1'$.

%%%%%%%%%%%%%%%%%%%%%%%%%%%%%%%%%%%%%%%%%%%%%%%%
{\bf Case 1}: $\epsilon_{1}' > 400k^3\epsilon$.

Define
\begin{align}
\mathcal{B}_{1}
= \left\{E\in \mathcal{H}\setminus \mathcal{H}'\colon \binom{E}{2}\cap G_{\ell}' \neq\emptyset\right\}
\quad{\rm and}\quad
\mathcal{B}_{2}
= \left\{E\in \mathcal{H}\setminus \mathcal{H}'\colon \binom{E}{3}\subset G\setminus G_{\ell}'\right\}. \notag
\end{align}

\begin{claim}\label{CLAIM-B1-upper-bound}
We have $|\mathcal{B}_{1}| \le 50\epsilon kn^3$.
\end{claim}
\begin{proof}[Proof of Claim~\ref{CLAIM-B1-upper-bound}]
For every $uv\in G_{\ell}'$ let $\varphi(uv)\in[k]$ denote the index such that
$N_{\mathcal{H}'}(uv) \subset V_{\varphi(uv)}$.
Suppose that $E = \{u,v,w\}$ is contained in $\mathcal{B}_{1}$ and $uv\in G_{\ell}'$.
Then $w\not\in V_{\varphi(uv)}$, since otherwise $E$ would be contained in $\mathcal{H}'$.
Notice that every vertex  $w\in N_{\mathcal{H}}(uv) \setminus V_{\varphi(uv)}$ cannot be adjacent to
vertices in $V_{\varphi(uv)}$ (in $G$), since by Observation~\ref{OBS:cancellative-pair-co-neighbor-is-independent}
the set $N_{\mathcal{H}}(uv)$ is independent in $G$.
Therefore, $d_{M_{G}}(w) \ge V_{\varphi(uv)} \ge n/(100k)$ for every vertex $w\in N_{\mathcal{H}}(uv) \setminus V_{\varphi(uv)}$.
Let
\begin{align}
B_{V} = \bigcup_{uv\in G'} (N_{\mathcal{H}}(uv) \setminus V_{\varphi(uv)}). \notag
\end{align}
Since $|M_{G}| \le \frac{k-1}{2k}\epsilon n^2$,
it follows from $2|M_{G}| \ge \sum_{w\in B_{V}}d_{M_{G}}(w)$ and $d_{M_{G}}(w) \ge n/(100k)$ for every $w\in B_V$ that
$|B_{V}|\frac{n}{100k} \le 2 \times \frac{k-1}{2k}\epsilon n^2 < \epsilon n^2$.
Therefore, $|B_{V}| \le 100\epsilon k n$.

Since every edge in $\mathcal{B}_{1}$ contains at least one vertex in $B_{V}$,
it follows that $|\mathcal{B}_{1}| \le |B_{V}|\binom{n}{2} \le 50\epsilon k n^3$.
\end{proof}%CLAIM

Since $\partial\mathcal{B}_{2} \subset G\setminus G_{\ell}'$,
we have $|\partial\mathcal{B}_{2}| \le \frac{k-1}{2k}(\epsilon_1'-\epsilon) n^2 < \epsilon_1' n^2$.
So, by inequality $(\ref{equ:fesible-region-cancel-3-left})$,
$|\mathcal{B}_{2}| \le (\partial\mathcal{B}_{2})^{3/2} < (\epsilon_{1}')^{3/2} n^3$.
Therefore, Lemma~\ref{LEMMA:edges-in-S-colorable-3-graph} applied to $(G_{\ell}', \mathcal{H}'')$ yields
\begin{align}
|\mathcal{H}|
& = |\mathcal{H}''| + |\mathcal{H}'\setminus \mathcal{H}''| + |\mathcal{B}_{1}| + |\mathcal{B}_{2}| \notag\\
& \le \frac{k-1}{6k^2}n^3 - \frac{k-1}{4k^2}\epsilon_1' n^3 + 2(\epsilon_{1}')^{3/2} n^3 + \frac{k-1}{2k}\epsilon_{1}'n^2 \cdot \frac{n}{100k}
     + 50\epsilon kn^3 + (\epsilon_1')^{3/2}n^{3} \notag\\
& \le \frac{k-1}{6k^2}n^3 - \left(\frac{k-1}{4k^2}\epsilon_1' n^3 - \frac{k-1}{200k^2}\epsilon_1' n^3- (\epsilon_1')^{3/2}n^{3}
     + 50\epsilon kn^3\right) \notag\\
& \le \frac{k-1}{6k^2}n^3 - \frac{k-1}{4k^2}\epsilon n^3. \notag
\end{align}
where the last inequality follows from $\epsilon_1' > 400k^3 \epsilon$ and $\epsilon_1' \le 16 \frac{k^2}{k-1}\delta \ll 1$.

%%%%%%%%%%%%%%%%%%%%%%%%%%%%%%%
{\bf Case 2}: $\epsilon_{1}'\le 400k^3 \epsilon$.

Define
\begin{align}
G_{\ell}
= \left\{uv\in \partial\mathcal{H}\colon d_{\mathcal{H}}(uv) \ge \frac{n}{50k}\right\},
\quad{\rm and}\quad
\mathcal{H}_{\ell}
= \left\{E\in \mathcal{H}\colon \binom{E}{2} \subset G_{\ell}\right\}. \notag
\end{align}
Let $\epsilon_{1} \ge 0$ be the real numbers such that
$|G_{\ell}| = (1-\epsilon_1)\frac{k-1}{2k}n^2$.
Note that $G_{\ell} \subset G \subset \widehat{G}$ and  $\epsilon_1 \ge \epsilon$.

We claim that it suffices to show that
\begin{align}\label{equ:H-large-upper-bound}
|\mathcal{H}_{\ell}|
\le \frac{k-1}{6k^2}n^3 - \frac{k-1}{4k^2}\epsilon_{1}n^3 + 10^8 \epsilon_1^{3/2}k^9 n^3.
\end{align}
Indeed, suppose that $(\ref{equ:H-large-upper-bound})$ holds.
Then
\begin{align}
|\mathcal{H}|
& \le |\mathcal{H}_{\ell}| + |G\setminus G_{\ell}|\cdot \frac{n}{50k} \notag\\
& \le \frac{k-1}{6k^2}n^3 - \frac{k-1}{4k^2}\epsilon_{1}n^3 + 10^8 \epsilon_1^{3/2}k^9 n^3
         +  (\epsilon_1-\epsilon)\frac{k-1}{2k}n^2 \cdot \frac{n}{50k} \notag\\
& \le \frac{k-1}{6k^2}n^3 - \frac{k-1}{4k^2}\epsilon n^3
        - \left((\epsilon_1-\epsilon)\frac{k-1}{5k^2}-10^8 \epsilon_1^{3/2}k^9\right)n^3.  \notag
\end{align}
If $\epsilon_{1} \ge 2\epsilon$, then $(\epsilon_1-\epsilon)\frac{k-1}{5k^2}-10^8 \epsilon_1^{3/2}k^9> 0$.
If $\epsilon_{1} \le 2\epsilon$, then $10^8 \epsilon_1^{3/2}k^9 n^3 \le 10^9 \epsilon^{3/2}k^9 n^3$.
In either case we are done.

Let $\mathcal{H}_{\ell}' = \mathcal{H}_{\ell} \cap \mathcal{H}'$ and $\mathcal{H}_{\ell}'' = \mathcal{H}_{\ell}\cap \mathcal{H}''$.
Define
\begin{align}
\mathcal{B}_{1}'(a)
& = \left\{E\in \mathcal{H}_{\ell}\setminus \mathcal{H}_{\ell}'\colon \left|\binom{E}{2}\cap G_{\ell}'\right| \ge 2\right\}, \notag\\
\mathcal{B}_{1}'(b)
& = \left\{E\in \mathcal{H}'_{\ell}\setminus \mathcal{H}_{\ell}''\colon \left|\binom{E}{2}\cap G_{\ell}'\right| =2\right\}, \notag\\
\quad{\rm and}\quad
\mathcal{B}_{2}'
& = \left\{E\in \mathcal{H}_{\ell}\setminus \mathcal{H}_{\ell}''\colon
         \left|\binom{E}{2}\setminus G_{\ell}'\right| \ge 2\right\}. \notag
\end{align}

A crucial observation is that if an edge $E\in \mathcal{H}_{\ell}\setminus \mathcal{H}_{\ell}'' = \mathcal{H}_{\ell}\setminus \mathcal{H}''$
satisfies $\left|\binom{E}{2}\cap G_{\ell}'\right| = 3$, then
$E\in \mathcal{H}_{\ell}\setminus \mathcal{H}_{\ell}' = \mathcal{H}_{\ell}\setminus \mathcal{H}'$.
Indeed, if $E\in \mathcal{H}_{\ell}' \subset \mathcal{H}'$ and $\binom{E}{3}\subset G_{\ell}'$,
then by the definition of $\mathcal{H}''$, we would have $E\in \mathcal{H}''$,
which implies that $E\in \mathcal{H}_{\ell}\cap \mathcal{H}'' = \mathcal{H}_{\ell}''$.

Therefore, $\mathcal{B}_{1}'(a) \cup \mathcal{B}_{1}'(b) \cup \mathcal{B}_{2}'$
is a partition of $\mathcal{H}_{\ell}\setminus \mathcal{H}_{\ell}''$.

\begin{claim}\label{CLAIM-B1'(a)-upper-bound}
We have $|\mathcal{B}_{1}'(a)| \le 10^4 \epsilon^2 k^2 n^2$.
\end{claim}
\begin{proof}[Proof of Claim~\ref{CLAIM-B1'(a)-upper-bound}]
Similar to the proof of Claim~\ref{CLAIM-B1-upper-bound},
for every $uv\in G_{\ell}'$ let $\varphi(uv) \in [k]$ denote the index such that
$N_{\mathcal{H}'}(uv) \subset V_{\varphi(uv)}$.
Let
\begin{align}
B_{V} = \bigcup_{uv\in G'} (N_{\mathcal{H}}(uv) \setminus V_{\varphi(uv)}),  \notag
\end{align}
and recall from the proof of Claim~\ref{CLAIM-B1-upper-bound} that $|B_{V}| \le 100\epsilon k n$.

Suppose that $E = \{u,v,w\}$ is contained in $\mathcal{B}_{1}'(a)$ and assume that $uv,uw\in G_{\ell}'$.
Then by the definition of $B_{V}$, we have $w\in B_V$ (because $uv\in G_{\ell}'$) and $v\in B_V$ (because $uw\in G_{\ell}'$).
Therefore, $E$ has at least two vertices in $B_{V}$.
It follows that $|\mathcal{B}_{1}'(a)| \le \binom{|B_V|}{2}\cdot n < 10^4 \epsilon^2 k^2 n^2$.
%We claim that every edge in $\mathcal{B}_{1}'$ is contained in $B_{V}$.
%Indeed, suppose to the contrary that there exists an edge $E\in \mathcal{B}_{1}'$
%such that $E\not\subset B_V$.
%Let us assume that $E = \{u,v,w\}$ and $w\not\in B_V$.
%Then it follows from the definition of $B_V$ that $w\in V_{\varphi(uv)}$, which implies that $\{u,v,w\}\in \mathcal{H}'$
%contradicting the observation before Claim~\ref{CLAIM-B1'(a)-upper-bound} that $E\not\in\mathcal{H}'$.
%Therefore, every edge in $\mathcal{B}_{1}'$ is contained in $B_{V}$,
%and it follows that $|\mathcal{B}_{1}'| \le |B_{V}|^3 \le 10^{6}\epsilon^3 k^3 n^3$.
\end{proof}%CLAIM

Let $\epsilon_2\ge 0$ be the real number such that $|G_{\ell} \cap G_{\ell}'| = (1-\epsilon_{2})\frac{k-1}{2k}n^2$.
Notice that $|G_{\ell} \cap G_{\ell}'| \ge |G_{\ell}| - |G\setminus G_{\ell}'| \ge |G_{\ell}| - \epsilon_1'\frac{k-1}{2k}n^2$,
thus $\epsilon_{2} \le \epsilon_1 + \epsilon_1' < \epsilon_1+ 400k^3 \epsilon < 401k^3\epsilon_1$.

Now suppose that $E = \{u,v,w\}$ is contained in $\mathcal{B}_{1}'(b)$.
By definition, $E\cap (G_{\ell} \setminus G_{\ell}') \neq\emptyset$,
and without loss of generality we may assume that $uv \in G_{\ell} \setminus G_{\ell}'$.
Since $uv \in G_{\ell}\subset G_{\ell}'$, it follows from definitions of $G_{\ell}$ and $G_{\ell}'$ that
$|N_{\mathcal{H}}(uv)| \ge \frac{n}{50k}$ and $|N_{\mathcal{H}'}(uv)| < \frac{n}{100k}$.
This implies that there exists $i\in [k]\setminus \{\varphi(uv)\}$ such that
$|N_{\mathcal{H}}(uv) \cap V_{i}| \ge \frac{n}{50k} - \frac{n}{100k} = \frac{n}{100k}$.
By Claim~\ref{claim-cancel-3-N(uv)-concenrate}, we actually have
$|N_{\mathcal{H}}(uv) \cap V_{i}| \ge |N_{\mathcal{H}}(uv)| - \frac{\epsilon n^2}{|N_{\mathcal{H}}(uv)|}
\ge |N_{\mathcal{H}}(uv)| - 100\epsilon kn^2$,
which in turn implies that $|N_{\mathcal{H}'}(uv)| \le |N_{\mathcal{H}}(uv) \setminus V_i| \le 100\epsilon k n^2$.
Therefore,
\begin{align}
|\mathcal{B}_{1}'(b)|
\le \sum_{uv\in G_{\ell} \setminus G_{\ell}'}|N_{\mathcal{H}'}(uv)|
\le 100\epsilon k n^2 \cdot |G \setminus G_{\ell}'|
\le  100\epsilon\epsilon_1' k n^3. \notag
\end{align}
Now suppose that $E$ is an edge in $\mathcal{B}_{2}'$.
By definition $\left|\binom{E}{2}\cap (G_{\ell}\setminus G_{\ell}')\right| \ge 2$.
So
\begin{align}
|\mathcal{B}_{2}'|
\le \frac{1}{2}\sum_{uv\in G_{\ell}\setminus G_{\ell}'}N_{\mathcal{H}_{\ell}}(uv)
& \le \frac{1}{2}|G_{\ell}\setminus G_{\ell}'|\left(\frac{n}{k}+3\epsilon^{1/2}n\right) \notag\\
& =   \frac{1}{2}|G_{\ell}\setminus(G_{\ell} \cap G_{\ell}')|\left(\frac{n}{k}+3\epsilon^{1/2}n\right) \notag\\
& = \frac{1}{2}(\epsilon_2-\epsilon_1)\frac{k-1}{2k}n^2\left(\frac{n}{k}+3\epsilon^{1/2}n\right) \notag\\
& \le \frac{k-1}{4k^2}(\epsilon_2-\epsilon_1)n^3 + \epsilon^{1/2} \epsilon_2 n^3\notag
\end{align}
(the factor $1/2$ is due to the fact that every edge in $\mathcal{B}_{2}'$ is counted at least twice in the summation
$\sum_{uv\in G_{\ell}\setminus G_{\ell}'}N_{\mathcal{H}_{\ell}}(uv)$).

Now Lemma~\ref{LEMMA:edges-in-S-colorable-3-graph} applied to $(G_{\ell}\cap G_{\ell}',\mathcal{H}_{\ell}'')$ yields
\begin{align}
|\mathcal{H}_{\ell}|
& = |\mathcal{H}_{\ell}''| + |\mathcal{B}_{1}'(a)| + |\mathcal{B}_{2}'(b)| + |\mathcal{B}_{2}'| \notag\\
& \le \frac{k-1}{6k^2}n^3 - \frac{k-1}{4k^2}\epsilon_2n^3 + 2\epsilon_{2}^{3/2}n^3 + 10^4\epsilon^2 k^2n^3
     + \frac{k-1}{4k^2}(\epsilon_2-\epsilon_1)n^3 + \epsilon^{1/2} \epsilon_2 n^3 \notag\\
& \le \frac{k-1}{6k^2}n^3 - \frac{k-1}{4k^2}\epsilon_1n^3
     + \left(2\epsilon_{2}^{3/2}n^3+10^4\epsilon^2 k^2n^3+\epsilon^{1/2} \epsilon_2 n^3\right) \notag\\
& \le \frac{k-1}{6k^2}n^3 - \frac{k-1}{4k^2}\epsilon_1n^3
     + \left(2(401k^3\epsilon_1)^{3/2}+10^4(401k^3\epsilon_1)^2 k^2+\epsilon_{1}^{1/2} (401k^3\epsilon_1) \right)n^3 \notag\\
& \le \frac{k-1}{6k^2}n^3 - \frac{k-1}{4k^2}\epsilon_1n^3 + 10^8 \epsilon_1^{3/2}k^9 n^3, \notag
\end{align}
this proves $(\ref{equ:H-large-upper-bound})$.
\end{proof}%LEMMA

%%%%%%%%%%%%%%%%%%%%%%%%%%%%%%%%%%%%%%%%%
Now we are ready to prove Theorem~\ref{THM:application-stability-left-is-small}.

\begin{proof}[Proof of Theorem~\ref{THM:application-stability-left-is-small}]
Fix $C$ be a sufficiently large constant (the exact value of $C$ can be obtained from the last inequality of the proof).
Let $\epsilon > 0$ be a sufficiently small constant and let $n$ be a sufficiently large integer.
Let $\mathcal{H}$ be a cancellative $3$-graph on $n$ vertices with $|\partial\mathcal{H}| = (1-\epsilon)\frac{k-1}{2k}n^2$.
Suppose to the contrary that
\begin{align}
|\mathcal{H}|
\ge \frac{k-1}{6k^2} n^3 - \frac{k-1}{4k^2} \epsilon n^3 + C\epsilon^{3/2}n^3
\ge (1-2\epsilon)\frac{k-1}{6k^2} n^3. \notag
\end{align}
Let $V = V(\mathcal{H})$.
By Lemma~\ref{LEMMA:cancellative-3-graph-stability-decompose-vertex-set}, there exists a set
$U\setminus V$ of size at most $130(2\epsilon) k^4 n$ such that
the induced subgraph of $\partial\mathcal{H}$ on $V\setminus U$ is $k$-partite.
Viewing $\mathcal{H}[V\setminus U]$ as a subgraph on $V$  (so $U$ is a set of isolate vertices in $\mathcal{H}[V\setminus U]$).
Let $\epsilon_1\ge 0$ be the real number such that $|(\partial\mathcal{H})[V\setminus U]| = (1-\epsilon_{1})\frac{k-1}{2k}n^2$.
Notice that $|(\partial\mathcal{H})[V\setminus U]| \ge |\partial\mathcal{H}|-|U|n$,
so $\epsilon_1 \le 4\times 130(2\epsilon) k^4 \le 1100  k^4\epsilon$.
Since the pair $((\partial\mathcal{H})[V\setminus U], \mathcal{H}[V\setminus U])$ is cancellative,
it follows from Lemma~\ref{LEMMA:cancelltive-pair-G-k-partite-upper-bound} that
\begin{align}
|\mathcal{H}[V\setminus U]| \le \frac{k-1}{6k^2}n^3 - \frac{k-1}{4k^2}\epsilon_1 n^3 + 10^9 \epsilon_{1}^{3/2}k^9 n^3. \notag
\end{align}
For $i\in [3]$ let $\mathcal{E}_{i}$ denote the set of edges in $\mathcal{H}$ that have exactly $i$ vertices in $U$.
For every $u\in U$ let $d_{V\setminus U}(u) = |N_{\mathcal{H}}(u)\setminus U|$.
Similar to Claim~\ref{CLAIM:size-common-neighbors},
we have $|N_{\mathcal{H}}(uv)\setminus U| \le \left(1/k+3\epsilon_{1}^{1/2}\right)n$ for every $v\in N_{\mathcal{H}}(u)$.
In other words, every vertex in $L_{\mathcal{H}}(v)[V\setminus U]$ has degree at most $\left(1/k+3\epsilon_{1}^{1/2}\right)n$.
Therefore,
\begin{align}
|\mathcal{E}_{1}|
\le \sum_{u\in U} |L_{\mathcal{H}}(v)[V\setminus U]|
& \le \frac{1}{2}\sum_{u\in U} d_{V\setminus U}(u) \times \left(\frac{1}{k}+3\epsilon_{1}^{1/2}\right)n \notag\\
& = \frac{1}{2}|(\partial\mathcal{H})[V, U]|\left(\frac{1}{k}+3\epsilon_{1}^{1/2}\right)n. \notag
\end{align}
Therefore,
\begin{align}
|\mathcal{H}|
& \le |\mathcal{H}[V\setminus U]| + |\mathcal{E}_{1}| + |\mathcal{E}_{2}| + |\mathcal{E}_{3}| \notag\\
& \le \frac{k-1}{6k^2}n^3 - \frac{k-1}{4k^2}\epsilon_1 n^3 + 10^9 \epsilon_{1}^{3/2}k^9 n^3
   + \frac{1}{2}|(\partial\mathcal{H})[V,U]|\left(\frac{1}{k}+3\epsilon_{1}^{1/2}\right)n + \binom{|U|}{2} n \notag\\
& \le \frac{k-1}{6k^2}n^3 - \frac{k-1}{4k^2}\epsilon_1 n^3
   + \frac{1}{2}\frac{k-1}{2k}(\epsilon_1-\epsilon)n^2\left(\frac{1}{k}+3\epsilon_{1}^{1/2}\right)n
   + \left(10^9 \epsilon_{1}^{3/2}k^9 + (260)^2\epsilon^2 k^8\right) n^3\notag\\
& \le \frac{k-1}{6k^2}n^3 - \frac{k-1}{4k^2}\epsilon_1 n^3
   + \frac{k-1}{4k^2}(\epsilon_1-\epsilon)n^3 + \left(\epsilon_1^{3/2} + 10^9 \epsilon_{1}^{3/2}k^9 + (260)^2\epsilon^2 k^8\right)n^3\notag\\
& < \frac{k-1}{6k^2}n^3 - \frac{k-1}{4k^2}\epsilon n^3 + C \epsilon^{3/2} n^3 \notag
\end{align}
contradicting our assumption (the last inequality used $\epsilon_1 \le 1100  k^4 \epsilon$).
This completes the proof of Theorem~\ref{THM:application-stability-left-is-small}.
\end{proof}

%%%%%%%%%%%%%%%%%%%%%%%%%%%%%%%%%%%%%%%%%%%%%%%%%%%%%%%%%%%%%%%%%%%%%%%%%%%%%%
\section{Proof of Theorem~\ref{thm-cancellative-exact}}\label{SEC:exact-result}
In this section we will prove Theorem \ref{thm-cancellative-exact}.
The following lemma will be useful for the proof.

\begin{lemma}\label{lemma-cancel-3-exact-induction}
Suppose that $(G,\mathcal{H})$ is a cancellative pair on set $V$, and $V = S\cup T$ is a partition.
If $G[S]$ is a complete graph,
then $|\mathcal{H}| \le |\mathcal{H}[T]| + |G[T]| + |G[S,T]|/2  + |G[S]|/3$.
\end{lemma}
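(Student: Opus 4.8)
The plan is to split the edges of $\mathcal{H}$ according to the size of their intersection with $S$ and bound each class against the corresponding part of $G$. Write $\mathcal{H}_i=\{E\in\mathcal{H}:|E\cap S|=i\}$ for $i=0,1,2,3$; then $\mathcal{H}_0=\mathcal{H}[T]$, $\mathcal{H}_3=\mathcal{H}[S]$, and
\[
|\mathcal{H}|=|\mathcal{H}[T]|+|\mathcal{H}_1|+|\mathcal{H}_2|+|\mathcal{H}_3|,
\]
so it suffices to prove $|\mathcal{H}_1|\le |G[T]|$, $|\mathcal{H}_2|\le |G[S,T]|/2$, and $|\mathcal{H}_3|\le |G[S]|/3$, since $G[S,T]$ and $G[S]$ are then charged with the right coefficients and nothing is counted twice.

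For the first bound I would send each $E=\{s,t_1,t_2\}\in\mathcal{H}_1$ (with $s\in S$ and $t_1,t_2\in T$) to the pair $\{t_1,t_2\}$, which lies in $\partial\mathcal{H}\subseteq G$ and hence in $G[T]$. This assignment is injective: two edges $\{s,t_1,t_2\},\{s',t_1,t_2\}\in\mathcal{H}$ with $s\neq s'$ would put $s,s'$ into $N_{\mathcal{H}}(t_1t_2)$, which is independent in $G$ by Observation~\ref{OBS:cancellative-pair-co-neighbor-is-independent}, contradicting $\{s,s'\}\in G[S]$ (here we use that $G[S]$ is complete). Thus $|\mathcal{H}_1|\le|G[T]|$. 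For the second bound I would double-count incidences between $\mathcal{H}_2$ and $G[S,T]$: every $E\in\mathcal{H}_2$ carries exactly two pairs of $G[S,T]$ (its two $S$–$T$ edges, both automatically in $G$ since $\partial\mathcal{H}\subseteq G$), so $2|\mathcal{H}_2|=\sum_{p\in G[S,T]}|\{E\in\mathcal{H}_2:p\subset E\}|$, while the same argument via Observation~\ref{OBS:cancellative-pair-co-neighbor-is-independent} shows each fixed $p=\{s,t\}\in G[S,T]$ lies in at most one edge of $\mathcal{H}_2$ (two such edges $\{s,t,s'\},\{s,t,s''\}$ would force $\{s',s''\}$ to be simultaneously in $G$ and in the independent set $N_{\mathcal{H}}(st)$). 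Hence $|\mathcal{H}_2|\le|G[S,T]|/2$.

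For the third bound, observe that $(G[S],\mathcal{H}[S])$ is again a cancellative pair with $G[S]$ complete, so Lemma~\ref{LEMMA:cancellative-3-graphs-clique} applies directly and gives
\[
3|\mathcal{H}[S]|=\sum_{v\in S}d_{\mathcal{H}[S]}(v)\le|\partial(\mathcal{H}[S])|\le|G[S]|,
\]
that is, $|\mathcal{H}_3|=|\mathcal{H}[S]|\le|G[S]|/3$. Adding the three inequalities to $|\mathcal{H}[T]|$ yields exactly the claimed bound.

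I do not expect a genuine obstacle here: the argument is a clean counting split, and the only points that need a line of justification are checking that $(G[S],\mathcal{H}[S])$ really is a cancellative pair — its shadow is contained in $\partial\mathcal{H}\cap\binom{S}{2}\subseteq G[S]$ and it inherits the forbidden configuration from $(G,\mathcal{H})$ — so that Lemma~\ref{LEMMA:cancellative-3-graphs-clique} may be invoked, and noting that the three estimates charge edges of $G$ lying respectively in $\binom{T}{2}$, in the bipartite part between $S$ and $T$, and in $\binom{S}{2}$, so these classes are disjoint and no pair of $G$ is overcounted even though, e.g., an edge of $\mathcal{H}_1$ also uses two $S$–$T$ pairs.
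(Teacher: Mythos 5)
Your proof is correct and follows essentially the same route as the paper: partition the edges of $\mathcal{H}$ by $|E\cap S|$ and bound each class using Observation~\ref{OBS:cancellative-pair-co-neighbor-is-independent} together with the completeness of $G[S]$, which forces every pair of $G$ to lie in at most one edge of the relevant class. The only (harmless) deviation is in the class with $|E\cap S|=3$, where you invoke Lemma~\ref{LEMMA:cancellative-3-graphs-clique} for the induced cancellative pair $(G[S],\mathcal{H}[S])$, while the paper just notes directly that each pair of $G[S]$ lies in at most one such edge; both give $|\mathcal{H}[S]|\le |G[S]|/3$.
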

\begin{proof}[Proof of Lemma~\ref{lemma-cancel-3-exact-induction}]
It suffices to prove that the $|\mathcal{H}\setminus\mathcal{H}[T]| \le |G[T]| + |G[S,T]|/2  + |G[S]|/3$.
For $i\in[3]$ let $\mathcal{E}{i}$ denote the set of edges in $\mathcal{H}\setminus\mathcal{H}[T]$ that have
exactly $i$ vertices in $S$.

Since $G[S]$ is complete, it follows form Observation~\ref{OBS:cancellative-pair-co-neighbor-is-independent} that
every pair $uv\in G[T]$ satisfies $|N_{\mathcal{H}}(uv)\cap S| \le 1$.
Therefore, $|\mathcal{E}_{1}| \le |G[T]|$.

Next, we claim that every pair $uv\in G[S,T]$ is contained in at most one edge in $\mathcal{E}_{2}$.
Indeed, suppose to the contrary that there exist two distinct vertices $w_1,w_2\in U$
such that $\{u,v,w_1\},\{u,v,w_2\}\in \mathcal{E}_{2}$.
Then $|N_{\mathcal{H}}(uv)\cap U|\ge 2$ contradicting Observation~\ref{OBS:cancellative-pair-co-neighbor-is-independent}.
Therefore, $|\mathcal{E}_{2}| \le |G[S,T]|/2$.

Finally, $|\mathcal{E}_{3}| \le |G[S]|/3$ follows from the observation that every pair $uv\in G[S]$ can be contained in at most one edge
in $\mathcal{E}_{3}$.
Therefore, $|\mathcal{H}\setminus\mathcal{H}[T]| = |\mathcal{E}_{1}| +  |\mathcal{E}_{2}| + |\mathcal{E}_{3}|
\le |G[T]| + |G[S,T]|/2  + |G[S]|/3$.
\end{proof}

The proof of Theorem \ref{thm-cancellative-exact} follows a similar pattern as the proof of
Theorem~\ref{THM:cancel-3-stability-steiner-real-statement}, but instead of showing that $\partial\mathcal{H}$
contains very few number of copies of $K_{k+1}$, we will show that it is actually $K_{k+1}$-free.
Then due to Tur\'{a}n's theorem, $\partial\mathcal{H}\cong T_{2}(n,k)$.
Finally, repeating the argument (starting from Claim~\ref{claim-cancel-3-L(v)-disjoint-bipartite})
in the proof of Lemma~\ref{LEMMA:k-partite-cancellative-3-graph-stability} one can easily show that
$\mathcal{H}$ is a blowup of a Steiner triple system on $k$ vertices.

\begin{proof}[Proof of Theorem \ref{thm-cancellative-exact}]
Let $k\in 6\mathbb{N}+\{1,3\}$, $k\ge 3$, and $n$ be a sufficiently large integer.
To keep the calculations simple, let us assume that $n$ is a multiple of $k$.
Let $\mathcal{H}$ be a cancellative $3$-graph on $n$ vertices with $|\partial\mathcal{H}| = t_{2}(n,k) = (k-1)n^2/(2k)$
and $|\mathcal{H}|\ge s(n,k) = (k-1)n^3/(6k^2)$.

Our first step is to show that $\partial\mathcal{H}$ is $K_{k+1}$-free.
If $\omega(\partial\mathcal{H}) \le k$, then we are done.
So we may assume that $\omega(\partial\mathcal{H}) \ge k+1$.
Let $(S_1,\ldots,S_{t},R)$ be a maximal clique expansion of $\partial\mathcal{H}$
with threshold $k$ for some positive integer $t$.
Let $t'\le t$ be the integer such that $|S_{t'}| \ge k+1$ but $|S_{t'+1}|\le k$.
Due to Observation~\ref{OBS:max-clique-expansion}~(a), $|S_1| \ge \cdots \ge |S_{k}|$,
so $t'$ is well-defined.
Let $e=|\partial\mathcal{H}|$.
For every $i\in[t]$ let $e_{i}$ denote the number of edges in $(\partial\mathcal{H})[V\setminus(S_1\cup\cdots\cup S_{i-1})]$
that have at least one vertex in $S_{i}$.
Let $\Sigma_i = \sum_{j=1}^{i} e_{j}$ and $W_i = \sum_{j=1}^{i} |S_i|$ for $i\in[t]$.

\begin{claim}\label{claim-cancel-3-exact-W-t'-up-bound}
We have $W_{t'} < 100k(k+1)$ and hence, $t' \le |W_{t'}|/(k+1) <  100k$.
\end{claim}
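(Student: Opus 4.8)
The plan is to run a contradiction argument parallel to the proof of Claim~\ref{claim-stability-sum-Si-up-bound}, but with the slack parameter set to $0$, so that it is the additive $O(n^2)$ error terms (rather than a multiplicative saving) that now pin $W_{t'}$ down to a constant. Assume for contradiction that $W_{t'}\ge 100k(k+1)$.

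The first and most delicate step is to bound $\omega(\partial\mathcal{H})$ by a constant depending only on $k$. Let $S$ be a maximum clique of $\partial\mathcal{H}$ and put $s=|S|$, $m=n-s$. Lemma~\ref{LEMMA:cancellative-clique-expansion} applied to the clique expansion $(S,V\setminus S)$ gives $|\mathcal{H}|\le |\mathcal{H}[V\setminus S]|+|\partial\mathcal{H}|$, and Theorem~\ref{thm-calcel-3-induction-shadow} bounds $|\mathcal{H}[V\setminus S]|$ in terms of the shadow density $x$ on $V\setminus S$. Since $|(\partial\mathcal{H})[S]|=\binom{s}{2}$ and, by maximality of $S$ (equivalently Observation~\ref{OBS:max-clique-expansion}(b)), $|(\partial\mathcal{H})[S,V\setminus S]|\le (s-1)(n-s)$, the shadow on $V\setminus S$ has at least $|\partial\mathcal{H}|-\binom{s}{2}-(s-1)(n-s)$ edges; this puts $x$ in the decreasing branch of $x(1-x)$, so substituting the extremal value of the cross-edge count and simplifying (the resulting cubic in $1-s/n$ factors cleanly, the relevant factor being $(1-s/n)\bigl(k(1-s/n)+1\bigr)$) yields $|\mathcal{H}|\le \frac{k-1}{6k^2}n^3-\frac{(k+1)s}{6k^2}n^2+O_k(n^2)$. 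Combined with $|\mathcal{H}|\ge s(n,k)=\frac{k-1}{6k^2}n^3$ this forces $s\le C_0$ for an explicit $C_0=C_0(k)$. One must split off the case where $s$ is comparable to $n$: there the extremal substitution of the cross-edge count is no longer valid, but then $m<n/\sqrt{k}$ and the crude bound $|\mathcal{H}[V\setminus S]|\le m^3/24$ already contradicts $|\mathcal{H}|\ge \frac{k-1}{6k^2}n^3$.

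With $|S_i|\le C_0$ for every $i$, I would then truncate. Let $j\le t'$ be minimal with $W_j\ge 100k(k+1)$; then $100k(k+1)\le W_j\le 100k(k+1)+C_0$, a constant, which in particular is smaller than $n/(50k^3)$ for $n$ large. Now I would repeat, essentially verbatim, the chain of substitutions from the proof of Claim~\ref{claim-stability-sum-Si-up-bound}, applied to the prefix expansion $(S_1,\dots,S_j,R_j)$ with $R_j=V\setminus(S_1\cup\cdots\cup S_j)$ and with $e=|\partial\mathcal{H}|=\frac{k-1}{2k}n^2$: Lemma~\ref{LEMMA:cancellative-clique-expansion} gives $|\mathcal{H}|\le |\mathcal{H}[R_j]|+je$, Theorem~\ref{thm-calcel-3-induction-shadow} bounds $|\mathcal{H}[R_j]|$ through $|(\partial\mathcal{H})[R_j]|=e-E_j$ with $E_j\le (W_j-j)n$ by Observation~\ref{OBS:max-clique-expansion}(b), and then one substitutes the extremal $E_j=(W_j-j)n$, then $j=W_j/(k+1)$ (valid since $|S_i|\ge k+1$ for $i\le j$), with all the monotonicity conditions holding comfortably because $W_j$ is bounded. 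The outcome is $|\mathcal{H}|\le \frac{k-1}{6k^2}n^3-\frac{W_j}{12k^2}n^2+3(n-W_j)^2$, and since $W_j\ge 100k(k+1)>36k^2$ the subtracted term exceeds $3n^2\ge 3(n-W_j)^2$, so $|\mathcal{H}|<\frac{k-1}{6k^2}n^3=s(n,k)$, a contradiction. Hence $W_{t'}<100k(k+1)$, and $W_{t'}\ge (k+1)t'$ gives $t'<100k$.

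The routine part is the last step: it is a transcription of Claim~\ref{claim-stability-sum-Si-up-bound} with $\epsilon=0$, the only thing to recheck being that the three monotonicity conditions used there still hold for a bounded $W_j$ and $e=\frac{k-1}{2k}n^2$ (they do, with room to spare). The main obstacle is the clique-number bound: in the stability setting the analogous fact (Claim~\ref{CLAIM:clique-number-shadow-H-upper-bound}) fell out of the $\epsilon$-slack, whereas here a genuine constant must be extracted from the exact hypotheses, and the estimate must be arranged so that the $\Theta_k(s\,n^2)$ gain dominates the $O_k(n^2)$ errors across all clique sizes, which is what necessitates the small split into the $s=o(n)$ and "$s$ comparable to $n$" regimes.
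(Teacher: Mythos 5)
Your proof is correct, and its computational engine is the same as the paper's: Lemma~\ref{LEMMA:cancellative-clique-expansion} plus Theorem~\ref{thm-calcel-3-induction-shadow}, followed by the substitutions $E_j=(W_j-j)n$ (via Observation~\ref{OBS:max-clique-expansion}), $j=W_j/(k+1)$ and $e=\frac{k-1}{2k}n^2$, ending with the comparison of a gain of order $\frac{W_j}{12k^2}n^2$ against the $3n^2$ error, using $100k(k+1)>36k^2$. Where you genuinely diverge is the preprocessing: the paper runs this chain directly on the full prefix $W_{t'}$, with no prior bound on $\omega(\partial\mathcal{H})$ and no truncation, and reads the contradiction off its final display; you instead first force $\omega(\partial\mathcal{H})\le C_0(k)$ by analysing a single maximum clique (your deficit computation is right: with $u=1-s/n$ the leading term of the bound is $\frac{ku^2-1}{6k^2u}n^3$, so the deficit is $\frac{(1-u)(ku+1)}{6k^2u}n^3\ge\frac{(k+1)s}{6k^2}n^2$ on the whole range where substituting the minimal cross-count is legitimate), and only then truncate to a minimal prefix with $W_j\in[100k(k+1),\,100k(k+1)+C_0]$. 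This buys real robustness: if $W_{t'}$ is allowed to be of order $n$, the quadratic factor $(k+1)^2n^2-k(k^3+2k^2-k+2)W_{t'}n+2k^3(k+1)W_{t'}^2$ in the paper's final bound can become negative (for $k=3$ it is negative for $W_{t'}/n$ roughly between $1/6$ and $4/9$), and the monotonicity justifications behind the substitutions are no longer automatic there, so the paper's terse conclusion implicitly relies on a truncation of the kind it carried out in Claim~\ref{claim-stability-sum-Si-up-bound}; your version supplies it explicitly, and with $W_j=O_k(1)$ every monotonicity check is immediate. Two harmless inaccuracies: the correct threshold for your case split is $m<\sqrt{2/k}\,n$ rather than $m<n/\sqrt{k}$ (the condition for the minimal cross-count to lie in the decreasing branch of $x(1-x)$ is $(1-s/n)^2\ge 2/k$), but the crude bound $|\mathcal{H}[V\setminus S]|\le m^3/24+3m^2$ still contradicts $|\mathcal{H}|\ge\frac{k-1}{6k^2}n^3$ for all $m\le\sqrt{2/k}\,n$ and $k\ge 3$, so the two regimes still cover everything; and, like the paper, your contradiction hypothesis is the standing assumption $|\mathcal{H}|\ge s(n,k)=\frac{k-1}{6k^2}n^3$ under which the claim is proved, which is fine.
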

\begin{proof}[Proof of Claim \ref{claim-cancel-3-exact-W-t'-up-bound}]
The proof is very similar to the proof of Claim~\ref{claim-stability-sum-Si-up-bound}
and in order to keep the proof short, we will omit some details in the calculations.

Let $x' = 2(e-\Sigma_{t'})/(n-W_{t'})^2$.
Then by Theorem~\ref{thm-calcel-3-induction-shadow} and Lemma~\ref{LEMMA:cancellative-clique-expansion},
\begin{align}\label{inequ-cancel-3-H-up-bound-1}
|\mathcal{H}|
& \le \frac{x'(1-x')}{6}(n-W_{t'})^3 + 3n^2 + t'e \notag\\
& = \frac{-2\Sigma_{t'}^2 +  \left( 4e-(n-W_{t'})^2 \right)\Sigma_{t'}+(n-W_{t'})^2e-2e^2}{3(n-W_{t'})} + 3n^2 + t'e \notag\\
& = \frac{-2(W_{t'}n-t'n)^2 +  \left( 4e-(n-W_{t'})^2 \right)(W_{t'}n-t'n)+(n-W_{t'})^2e-2e^2}{3(n-W_{t'})} + 3n^2 + t'e \notag\\
& \le \frac{-2n^2 (t')^2 +\left( n(n+W_{t'})^2-(n+3W_{t'})e \right)t' -(e-W_{t'}n)(2e-n^2-W_{t'}^2)}{3(n-W_{t'})} +3n^2.
\end{align}
Since $\omega_{i}\ge k+1$ for $i\in[t']$, $t' \le W_{t'}/(k+1)$.
Therefore, we may substitute $t' = W_{t'}/(k+1)$ into (\ref{inequ-cancel-3-H-up-bound-1}) and obtain
\begin{align}\label{inequ-cancel-3-H-up-bound-2}
|\mathcal{H}|
& \le \frac{(k+1)\left( -2(k+1)e^2 +\left((k+1)n^2 +(2k+1)W_{t'}n+(k-2)W_{t'}^2 \right)e \right)}{3(k+1)^2(n-W_{t'})} \notag\\
& \quad - \frac{\left( (k+1)(n^2+W_{t'}^2) -2W_{t'}n \right)kW_{t'}n}{3(k+1)^2(n-W_{t'})} +3n^2.
\end{align}
Substituting $e = (k-1)n^2 /(2k)$ into (\ref{inequ-cancel-3-H-up-bound-2}) we obtain
\begin{align}
|\mathcal{H}|
\le \frac{k-1}{6k^2}n^3
    - \frac{\left( (k+1)^2n^2 - k(k^3+2k^2-k+2)W_{t'}n + 2k^3(k+1)W_{t'}^2 \right) W_{t'}n}{6k^2(k+1)^2(n-W_{t'})} + 3n^2. \notag
\end{align}
If $W_{t'} \ge 100k(k+1)$, then the inequality above implies that $|\mathcal{H}| < (k-1)n^3/(6k^2)$, a contradiction.
Therefore, $W_{t'} < 100(k+1)$ and $t' \le W_{t'}/(k+1)<100k$.
\end{proof}

Next we show that $|R|$ is small.
For every $i\in[t]$ let $\omega_i = |S_i|$,
$R_i = V\setminus (S_1\cup \cdots \cup S_i)$ (note that $R_t = R$),
and $G_i$ be the induced subgraph of $\partial\mathcal{H}$ on $R_i$.

\begin{claim}\label{claim-cancel-3-Tt-up-bound}
We have $|R|<20k^2 n^{1/2}$.
\end{claim}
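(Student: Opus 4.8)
The plan is to pin $|R|$ down through the shadow rather than through $|\mathcal{H}|$: I will bound the number of shadow edges meeting $W:=S_1\cup\cdots\cup S_t$ using only the structure of the clique expansion, which forces $(\partial\mathcal{H})[R]$ to be almost as large as $\frac{k-1}{2k}|R|^2$; since $(\partial\mathcal{H})[R]$ is $K_k$-free by the threshold-$k$ assumption, this is impossible unless $|R|$ is small.

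First I would observe that if a shadow edge $\{a,b\}$ meets $W$ and $i^{\ast}$ is the least index with $\{a,b\}\cap S_{i^{\ast}}\neq\emptyset$, then either $\{a,b\}\subseteq S_{i^{\ast}}$ or $\{a,b\}$ joins $S_{i^{\ast}}$ to $R_{i^{\ast}}$. Since by Observation~\ref{OBS:max-clique-expansion}(b) every vertex of $R_{i}$ has at most $|S_{i}|-1$ neighbours in $S_{i}$, this gives $e_i \le \binom{|S_i|}{2}+(|S_i|-1)|R_i|$ for each $i$, hence
\[
|(\partial\mathcal{H})[R]| \;=\; e-\Sigma_t \;\ge\; \frac{k-1}{2k}n^2 - \sum_{i\in[t]}\binom{|S_i|}{2} - \sum_{i\in[t]}(|S_i|-1)|R_i|.
\]
For $i>t'$ one has $|S_i|=k$ and $|R_i| = M-k(i-t')$ with $M := |R_{t'}| = n-W_{t'}$, so (writing $m:=|R|$) $\sum_{i>t'}\binom{|S_i|}{2}=\binom{k}{2}(t-t') = \tfrac{(k-1)(n-m-W_{t'})}{2} < \tfrac{kn}{2}$, while $\sum_{i>t'}(k-1)|R_i| = (k-1)\sum_{j=1}^{t-t'}(M-kj) \le \frac{(k-1)(M^2-m^2)}{2k}$. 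For $i\le t'$, Claim~\ref{claim-cancel-3-exact-W-t'-up-bound} gives $\sum_{i\le t'}|S_i|=W_{t'}<100k(k+1)$, so those levels contribute at most $\binom{W_{t'}}{2}+W_{t'}n < C_1 k^2 n$ for an absolute constant $C_1$. Combining these, and using $n^2-M^2 = W_{t'}(n+M) \le 2W_{t'}n$, I obtain
\[
|(\partial\mathcal{H})[R]| \;\ge\; \frac{k-1}{2k}\bigl(n^2-M^2\bigr) + \frac{k-1}{2k}m^2 - C_1 k^2 n - \frac{kn}{2} \;\ge\; \frac{k-1}{2k}m^2 - C_2 k^2 n
\]
for an absolute constant $C_2$.

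The final step is then immediate: by the threshold-$k$ hypothesis $\omega\bigl((\partial\mathcal{H})[R]\bigr)<k$, so $(\partial\mathcal{H})[R]$ is $K_k$-free on $m$ vertices and Tur\'{a}n's theorem gives $|(\partial\mathcal{H})[R]|\le\frac{k-2}{2(k-1)}m^2$. Comparing with the lower bound above and using $\frac{k-1}{2k}-\frac{k-2}{2(k-1)}=\frac{1}{2k(k-1)}$ yields $\frac{m^2}{2k(k-1)}\le C_2 k^2 n$, i.e.\ $m^2 < 206 k^4 n$ once the constants are tracked, so $m<20k^2\sqrt{n}$. The only place where structural information is actually used is this last comparison of the two sizes of $(\partial\mathcal{H})[R]$; everything else ($\sum_{j}(M-kj)\le\frac{M^2-m^2}{2k}$, and the bookkeeping of the first $t'$ levels, which is the single spot where slack accumulates and is controlled entirely by $W_{t'}<100k(k+1)$) is routine. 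I expect no conceptual obstacle — the only care needed is to choose $C_1,C_2$ so that the clean bound $20k^2\sqrt{n}$ comes out at the end.
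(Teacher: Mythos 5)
Your proposal is correct and is essentially the paper's own argument in rearranged form: the paper bounds $|\partial\mathcal{H}|$ from above by $W_{t'}n+\sum_{j>t'}\bigl((\omega_j-1)|R_j|+\binom{\omega_j}{2}\bigr)+\frac{k-2}{2(k-1)}|R|^2$ using Observation~\ref{OBS:max-clique-expansion}(b), Claim~\ref{claim-cancel-3-exact-W-t'-up-bound}, and Tur\'an's theorem on the $K_k$-free graph $(\partial\mathcal{H})[R]$, which is the same level-by-level accounting you perform, just written as an upper bound on $e$ rather than a lower bound on $e-\Sigma_t=|(\partial\mathcal{H})[R]|$. Both proofs close via the identical density gap $\frac{k-1}{2k}-\frac{k-2}{2(k-1)}=\frac{1}{2k(k-1)}$, and your constants comfortably give $|R|<20k^2n^{1/2}$.
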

\begin{proof}[Proof of Claim \ref{claim-cancel-3-Tt-up-bound}]
Since $G_t = (\partial\mathcal{H})[R]$ is $K_k$-free, it follows from Tur\'{a}n's theorem that $|G_t| \le (k-2)|R|^2/(2(k-1))$.
For every $i\in[t]$ since every vertex in $R_{i}$ is adjacent to at most $\omega_i-1$ vertices in $S_{i}$,
we obtain $|G_{i-1}|\le |G_i|+(\omega_i-1)|R_{i}|+\binom{\omega_i}{2}$.
Therefore,
\begin{align}%\label{inequ-cancel-3-G-up-bound-1}
|\partial\mathcal{H}|
& \le \left(\sum_{i = 1}^{t'}\omega_i\right)n
      + \sum_{j = t'+1}^{t}\left( \left(\omega_j-1\right) \left( n- \left( \sum_{i = 1}^{j}\omega_i \right)\right) + \binom{\omega_j}{2}\right)
      + |G_{t}| \notag\\
& \le W_{t'}n + \sum_{i = 0}^{t-t'-1}\left(k-1\right)\left(n-W_{t'}-ik\right) + \frac{k-2}{2(k-1)}|R|^2 \notag\\
& \le W_{t'}n + (t-t')(k-1)(n-W_{t'}) -\frac{k(k-1)(t-t')(t-t'-1)}{2} + \frac{k-2}{2(k-1)}|R|^2. \notag
\end{align}
Since $t-t' = (n-W_{t'}-|R|)/k$, the inequality above and $|\partial\mathcal{H}| = (k-1)n^2/(2k)$ imply that
\begin{align}
\frac{k-1}{2k}n^2
& \le W_{t'}n + (k-1)(n-W_{t'})\frac{n-W_{t'}-|R|}{k} -\frac{k(k-1)}{2}\left(\frac{n-W_{t'}-|R|}{k}\right)^2 \notag\\
& \quad + k^2n + \frac{k-2}{2(k-1)}|R|^2 \notag\\
& < \frac{k-1}{2k}n^2 - \frac{1}{2k(k-1)}|R|^2 + \frac{2W_{t'}}{k}n+k^2n \notag\\
& < \frac{k-1}{2k}n^2 - \frac{|R|^2}{2k(k-1)} + 200k^2 n, \notag
\end{align}
which implies that $|R|< 20k^2 n^{1/2}$ (the last inequality used Claim \ref{claim-cancel-3-exact-W-t'-up-bound}).
\end{proof}

Notice that the upper bound for $|\mathcal{H}|$ in Theorem~\ref{thm-calcel-3-induction-shadow} has an error term $3m^3$,
which can be of order $n^2$ if $m$ is large. Our next claim improves this error term to $O(n)$.
For $i\in[t]$ let $x_{i} = 2|G_i| / |R_{i}|^2$ be the edge density of $G_i$.

\begin{claim}\label{claim-cancel-3-Hi-up-bound}
For every $t' \le i \le t$ we have
\begin{align}
|\mathcal{H}_{i}| \le \frac{x_{i}(1-x_{i})}{6} |R_{i}|^3 + k|R_{i}| + 1200k^4 n. \notag
\end{align}
\end{claim}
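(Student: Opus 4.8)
The plan is to prove the claim by downward induction on $i$, running from $i=t$ down to $i=t'$. Throughout, write $\mathcal{H}_i=\mathcal{H}[R_i]$ and recall that $(G_i,\mathcal{H}_i)=\left((\partial\mathcal{H})[R_i],\mathcal{H}[R_i]\right)$ is again a cancellative pair, being the restriction of $(\partial\mathcal{H},\mathcal{H})$ to $R_i$. For the base case $i=t$ we have $R_t=R$, which by Claim~\ref{claim-cancel-3-Tt-up-bound} has fewer than $20k^2 n^{1/2}$ vertices, so Theorem~\ref{thm-calcel-3-induction-shadow} applied to $(G_t,\mathcal{H}_t)$ gives
\[
|\mathcal{H}_t| \le \frac{x_t(1-x_t)}{6}|R|^3 + 3|R|^2 < \frac{x_t(1-x_t)}{6}|R|^3 + 1200k^4 n \le \frac{x_t(1-x_t)}{6}|R|^3 + k|R| + 1200k^4 n,
\]
as required (the term $k|R|$ being discarded for free).

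For the inductive step I would pass from $R_i$ to $R_{i-1}$ for some fixed $t'<i\le t$. Then $R_{i-1}=S_i\cup R_i$ is a partition, $(\partial\mathcal{H})[S_i]$ is a complete graph, and $|S_i|=k$ (by Observation~\ref{OBS:max-clique-expansion}(a) and the choice of $t'$, since the expansion has threshold $k$ and $i>t'$). Apply Lemma~\ref{lemma-cancel-3-exact-induction} to the cancellative pair $(G_{i-1},\mathcal{H}_{i-1})$ with $S=S_i$ and $T=R_i$; using Observation~\ref{OBS:max-clique-expansion}(b) to bound the number of cross edges by $|G_{i-1}[S_i,R_i]|\le (k-1)|R_i|$, and $|G_{i-1}[S_i]|=\binom{k}{2}$, this yields
\[
|\mathcal{H}_{i-1}| \le |\mathcal{H}_i| + |G_i| + \frac{k-1}{2}|R_i| + \frac{k(k-1)}{6}.
\]
Substituting the inductive hypothesis for $|\mathcal{H}_i|$ and cancelling the common $1200k^4 n$, it remains to verify that, writing $m=|R_i|$, $g=|G_i|=x_i m^2/2$ and $g'=|G_{i-1}|=x_{i-1}(m+k)^2/2$,
\[
\frac{x_i(1-x_i)}{6}m^3 + g + \frac{k-1}{2}m + \frac{k(k-1)}{6} + km \le \frac{x_{i-1}(1-x_{i-1})}{6}(m+k)^3 + k(m+k).
\]

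To establish this last inequality I would use the identity $\frac{x(1-x)}{6}M^3=\frac{1}{24}\bigl(M^3-(M^2-4G)^2/M\bigr)$, where $G=xM^2/2$ is the number of shadow edges on $M$ vertices, together with the bounds $\binom{k}{2}\le g'-g\le (k-1)m+\binom{k}{2}$ on the number of new shadow edges contributed by $S_i$. The claimed inequality then becomes a polynomial inequality in $m$, $g$ and $e:=g'-g$ over this box, and it is verified by examining its behaviour on the boundary; the growth of the additive term from $k|R_i|$ to $k|R_{i-1}|=k|R_i|+k^2$ at each step supplies precisely the $O(k^2)$ of slack that the verification requires. I expect this numerical check to be the only real obstacle: the map $G\mapsto \frac{x(1-x)}{6}M^3$ is not monotone in $G$ — it decreases once $x>1/2$, which is exactly the relevant regime since the extremal shadow density is $(k-1)/k\ge 2/3$ — and in the extremal configuration, where $e$ attains its maximum and $x_i=x_{i-1}=(k-1)/k$, the inequality is tight up to lower-order terms, so the constants must be tracked with care. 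Finally one disposes of the degenerate case $g=|G_i|=0$ separately, where $\mathcal{H}_i=\emptyset$ forces $|\mathcal{H}_{i-1}|=O(k^3)$, which is trivially within the stated bound.
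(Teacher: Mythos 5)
Your setup matches the paper's proof exactly (backward induction, base case from Claim~\ref{claim-cancel-3-Tt-up-bound} and Theorem~\ref{thm-calcel-3-induction-shadow}, inductive step via Lemma~\ref{lemma-cancel-3-exact-induction} with $|S_i|=k$ and Observation~\ref{OBS:max-clique-expansion}(b)), but the heart of the claim — the polynomial inequality you reduce to — is left as an unperformed ``numerical check,'' and as you state it the check would fail. Over the box you describe, namely $\binom{k}{2}\le e\le (k-1)m+\binom{k}{2}$ with \emph{no upper bound on} $g=|G_i|$, the inequality is false: writing $f(G,M)=\frac{x(1-x)}{6}M^3=\frac{G(M^2-2G)}{3M}$, take $g=\binom{m}{2}$ and $e=(k-1)m+\binom{k}{2}$; then the left side is $f(g,m)+g+O(km)\approx \frac{m^2}{6}+\frac{m^2}{2}=\frac{2m^2}{3}$, while the right side is $f(g+e,m+k)+O(km)\approx \frac{m^2}{2}$, since $(m+k)^2-2(g+e)=3m+k$. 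So ``examining the behaviour on the boundary'' cannot verify the inequality on your box — it fails precisely at the corner where $g$ and $e$ are both maximal.

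The missing ingredient is the constraint on $g$ that the paper uses and that you never invoke: for $t'\le i\le t$ the graph $G_i$ is $K_{k+1}$-free (by maximality of the clique expansion with threshold $k$, $\omega(G_i)=|S_{i+1}|=k$ for $i<t$, and $\omega(G_t)<k$), so Tur\'an's theorem gives $|G_i|\le \frac{k-1}{2k}|R_i|^2$, i.e.\ $x_i\le\frac{k-1}{k}$. You even remark that $G\mapsto f(G,M)$ is decreasing in the relevant regime and that ``the extremal shadow density is $(k-1)/k$,'' but that is a heuristic, not a bound derived from your hypotheses, and without it your reduction is not a true statement. The paper's verification proceeds exactly through this bound: after substituting the worst value $e_{i+1}=(k-1)(|R_i|-k)+\binom{k}{2}$ (legitimate because the convex quadratic in $e_{i+1}$ is decreasing on the admissible range), the resulting expression is decreasing in $|G_i|$ up to $\frac{3k-1}{4k}|R_i|^2-\frac{k+1}{4}|R_i|$, so one may evaluate at $|G_i|=\frac{k-1}{2k}|R_i|^2$, where the slack is $\frac{(11k+1)k}{12}>0$. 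To complete your proof you must import the $K_{k+1}$-freeness of $G_i$ and carry out this (or an equivalent) computation; as written, the proposal has a genuine gap at its decisive step.
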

\begin{proof}[Proof of Claim \ref{claim-cancel-3-Hi-up-bound}]
We proceed by backward induction on $i$.
When $i = t$, by Theorem~\ref{thm-calcel-3-induction-shadow} and Claim~\ref{claim-cancel-3-Tt-up-bound},
\begin{align}
|\mathcal{H}_{t}|
\le \frac{x_{t}(1-x_{t})}{6} |R|^3 + 3 |R|^2
= \frac{x_{t}(1-x_{t})}{6} |R|^3 + 1200k^4 n. \notag
\end{align}
Now assume that the claim is true for some $i+1$ with $t'+1 \le i+1 \le t$
and we want to show that it is also true for $i$.
By Lemma \ref{lemma-cancel-3-exact-induction} and the induction hypothesis,
\begin{align}
|\mathcal{H}_{i}|
& \le |\mathcal{H}_{i+1}| + |G_{i+1}| + {e_{i+1}}/{2} \notag\\
& \le \frac{x_{i+1}(1-x_{i+1})}{6} |R_{i+1}|^3 + k |R_{i+1}| + 1200k^4 n + |G_{i+1}| + \frac{e_{i+1}}{2}. \notag
\end{align}
Let
\begin{align}
\Delta
  = \left( \frac{x_{i}(1-x_{i})}{6} |R_{i}|^3 + k|R_{i}| \right)
     - \left( \frac{x_{i+1}(1-x_{i+1})}{6} |R_{i+1}|^3 + k |R_{i+1}| + |G_{i+1}| + \frac{e_{i+1}}{2}\right), \notag
\end{align}
and it suffices to show that $\Delta \ge 0$.
Note that $|G_{i+1}| = |G_{i}| - e_{i+1}$ and $|R_{i+1}| = |R_{i}| - k$, so
\begin{align}
\Delta
& \ge  \frac{4e^2_{i+1}-\left( 8|G_i|-2\left(|R_{i}|-k\right)^2-3\left(|R_{i}|-k\right) \right)e_{i+1}}{6\left(|R_{i}|-k\right)} \notag\\
&       + \frac{\left( 2k|G_i|+(k-3)\left(|R_{i}|-k\right)|R_{i}| \right)|G_{i}|}{3\left(|R_{i}|-k\right)|R_{i}|}+k^2. \notag
\end{align}
We may substitute $e_{i+1} = (k-1)(|R_{i}|-k)+\binom{k}{2}$ into the inequality above and obtain
\begin{align}
\Delta
\ge  \frac{8k|G_i|^2 - 4\left(3k|R_i|-|R_i|-k^2-k\right)|R_i||G_i|}{12|R_i|(|R_i|-k)}
      + \frac{(2|R_i|^2 - |R_i| -k)(2|R_i|-k)(k-1)}{12(|R_i|-k)} + k^2. \notag
\end{align}
Note that  $8k|G_i|^2 - 4\left(3k|R_i|-|R_i|-k^2-k\right)|R_i||G_i|$ is decreasing in $|G_i|$ when
\begin{align}
|G_i| \le \frac{3k-1}{4k}|R_i|^2 - \frac{k+1}{4}|R_i|. \notag
\end{align}
On the other hand, since $G_{i}$ is $K_{k+1}$-free, it follows from Tur\'{a}n's theorem that $|G_{i}| \le (k-1)|R_i|^2/(2k)$.
So, we may substitute $|G_{i}| = (k-1)|R_i|^2/(2k)$ into the inequality above and obtain
$\Delta \ge {(11k+1)k}/{12}>0$.
Therefore,
\begin{align}
|\mathcal{H}_{i}| \le \frac{x_{i}(1-x_{i})}{6}|R_i|^3 + k|R_i|+ 1200k^4 n. \notag
\end{align}
\end{proof}

\begin{claim}\label{claim-cancel-3-G-Kk+1-free}
The graph $\partial\mathcal{H}$ is $K_{k+1}$-free.
\end{claim}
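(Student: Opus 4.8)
The plan is to argue by contradiction: assume $\omega(\partial\mathcal H)\ge k+1$, so that $t'\ge 1$ (i.e.\ $|S_1|\ge k+1$) in the maximal clique expansion $(S_1,\dots,S_t,R)$ with threshold $k$. Write $U=S_1\cup\dots\cup S_{t'}$, $W=W_{t'}=|U|$, $R_{t'}=V\setminus U$, $m=|R_{t'}|=n-W$, $G_{t'}=(\partial\mathcal H)[R_{t'}]$ and $g=|G_{t'}|$; by Claim~\ref{claim-cancel-3-exact-W-t'-up-bound} both $W\le 100k(k+1)$ and $t'\le 100k$ are bounded in $k$ alone, so every term of size $O(k^{O(1)}n)$ below is negligible against the $\Theta(n^2)$ terms that decide the inequality. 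I would first apply Claim~\ref{claim-cancel-3-Hi-up-bound} at $i=t'$ to get $|\mathcal H_{t'}|\le \tfrac{x_{t'}(1-x_{t'})}{6}m^3+O(k^4n)=\tfrac{gm}{3}-\tfrac{2g^2}{3m}+O(k^4n)$ with $x_{t'}=2g/m^2$; it is essential here that this claim's error is $O(k^4n)$ rather than the $O(m^2)$ of Theorem~\ref{thm-calcel-3-induction-shadow}.

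Next I would bound the edges of $\mathcal H$ meeting $U$. Those with $\ge 2$ vertices in $U$ number at most $\binom W2 n+\binom W3=O(k^4n)$. An edge with exactly one vertex in $U$ has the form $\{v,a,b\}$ with $v\in U$ and $\{a,b\}\in G_{t'}$; since $N_{\mathcal H}(ab)$ is independent in $\partial\mathcal H$ (Observation~\ref{OBS:cancellative-pair-co-neighbor-is-independent}) while each $S_i$ is a clique of $\partial\mathcal H$, we get $|N_{\mathcal H}(ab)\cap S_i|\le 1$, hence $|N_{\mathcal H}(ab)\cap U|\le t'$, so summing over $\{a,b\}\in G_{t'}$ gives at most $t'g$ such edges. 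Thus $|\mathcal H|\le Q(g)+O(k^4n)$ with $Q(g):=\tfrac{gm}{3}-\tfrac{2g^2}{3m}+t'g$.

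Then I would locate $g$. The clique number of $G_{t'}$ equals $|S_{t'+1}|\le k$ (by maximality and the threshold; $<k$ in the degenerate case $t'=t$), so Turán gives $g\le\tfrac{k-1}{2k}m^2$. Conversely $g=|\partial\mathcal H|-\Sigma_{t'}$, and bounding $e_i\le\binom{|S_i|}2+(|S_i|-1)|R_i|$ via Observation~\ref{OBS:max-clique-expansion}(b) yields $\Sigma_{t'}\le(W-t')n+\tfrac{W^2}2$, so $g\ge g_{\min}:=\tfrac{k-1}{2k}n^2-(W-t')n-\tfrac{W^2}2$. Since $g\in[g_{\min},\tfrac{k-1}{2k}m^2]$ and $g_{\min}=\tfrac{k-1}{2k}m^2-O(kn)$ sits well above $m^2/4$ (here $k\ge 3$), one checks $Q'(g)=\tfrac m3+t'-\tfrac{4g}{3m}\approx -\tfrac{(k-2)m}{3k}+t'<0$ throughout the interval, so $Q(g)\le Q(g_{\min})$. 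A direct expansion of $Q(g_{\min})$ in $n$ (with $W,t'$ held constant) has $n^3$-coefficient $\tfrac13\cdot\tfrac{k-1}{2k}\cdot\tfrac1k=\tfrac{k-1}{6k^2}=s(n,k)/n^3$ and $n^2$-coefficient $\tfrac{k+1}{6k}\bigl(t'-\tfrac Wk\bigr)$. As $W=W_{t'}\ge t'(k+1)>t'k$ we get $t'-\tfrac Wk\le -\tfrac{t'}k\le -\tfrac1k$, so $Q(g_{\min})\le\tfrac{k-1}{6k^2}n^3-\tfrac{k+1}{6k^2}n^2+O(k^4n)$, whence $|\mathcal H|<\tfrac{k-1}{6k^2}n^3=s(n,k)$ for $n$ large, contradicting $|\mathcal H|\ge s(n,k)$.

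The entire content of the argument is the balance in the last paragraph: the three slacks available — the Turán deficit pushing $g$ down to $g_{\min}$, the bound $|N_{\mathcal H}(ab)\cap S_i|\le 1$ cutting the $U$-edge count from the naive $\sum_{v\in U}d_{\mathcal H}(v)\le t'|\partial\mathcal H|$ to $t'g$, and the $O(k^4n)$ error of Claim~\ref{claim-cancel-3-Hi-up-bound} — must jointly strictly beat the $t'g\approx t'\tfrac{k-1}{2k}n^2$ loss from restoring the edges through $U$. This fails already for $k=3$ with the coarser estimates, so evaluating $Q$ at $g_{\min}$ rather than at the Turán extreme, and using the refined bound $\Sigma_{t'}\le(W-t')n+O(W^2)$, are exactly what make it go through.
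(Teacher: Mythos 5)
Your proposal is correct and follows essentially the same route as the paper: you invoke Claim~\ref{claim-cancel-3-exact-W-t'-up-bound} to bound $W_{t'}$ and $t'$, Claim~\ref{claim-cancel-3-Hi-up-bound} at $i=t'$ for the $O(k^4n)$-error bound on $|\mathcal{H}[R_{t'}]|$, and the bound $\Sigma_{t'}\le (W_{t'}-t')n+O(W_{t'}^2)$ from Observation~\ref{OBS:max-clique-expansion}, then show the total falls short of $\tfrac{k-1}{6k^2}n^3$ by $\Omega_k(n^2)$, which is exactly the paper's contradiction. Your only deviations are cosmetic: you count the edges meeting $S_1\cup\dots\cup S_{t'}$ as $t'g+O(k^4n)$ via Observation~\ref{OBS:cancellative-pair-co-neighbor-is-independent} instead of the paper's $t'e$ from Lemma~\ref{LEMMA:cancellative-clique-expansion} (equivalent up to the absorbed error), and you perform the paper's substitution of extremal values as a one-variable monotonicity argument in $g$ evaluated at $g_{\min}$, landing on the same $\Theta(n^2)$ deficit.
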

\begin{proof}[Proof of Claim \ref{claim-cancel-3-G-Kk+1-free}]
Recall that $t'$ is the largest integer such that $\omega_{t'}\ge k+1$ and $W_{t'} = \sum_{i=1}^{t'}\omega_{i}$.
So it suffices to show that $t' = 0$, i.e. $W_{t'} = \sum_{i=1}^{t'}\omega_{i} = 0$.
Suppose that this is not true, i.e. $W_{t'}>0$.
Then by Lemma~\ref{LEMMA:cancellative-clique-expansion} and Claim \ref{claim-cancel-3-Hi-up-bound},
\begin{align}
|\mathcal{H}|
\le |\mathcal{H}_{t'}| + t'e
< \frac{x_{t'}(1-x_{t'})}{6} |R_{t'}|^3 + 1201k^4 n + t'e. \notag
\end{align}
Let
\begin{align}\label{inequ-cancel-3-exact-Delta-1}
\Delta = \frac{k-1}{6k^2} n^3 - \left( \frac{x_{t'}(1-x_{t'})}{6} |R_{t'}|^3 + t'e + 1201k^4 n \right),
\end{align}
and by assumption we should have $\Delta\le 0$.
Substituting $x_{t'} = 2(e-\Sigma_{t'})/(n-W_{t'})^2$ and $|R_{t'}| = n-W_{t'}$ into (\ref{inequ-cancel-3-exact-Delta-1}) we obtain
\begin{align}
\Delta
\ge \frac{k-1}{6k^2} n^3
    - \left( \frac{-2\Sigma_{t'}^2 + \left( 4e-(n-W_{t'})^2 \right)\Sigma_{t'}
    + (n-W_{t'})^2 e-2e^2}{3(n-W_{t'})} + t'e + 1201k^4 n  \right). \notag
\end{align}
Similar to the proof of Claim \ref{claim-cancel-3-exact-W-t'-up-bound},
we may substitute $\Sigma_{t'} = \left( W_{t'} - t' \right)n$, $t' = W_{t'}/(k+1)$, and $e=(k-1)n^2/(2k)$
into the inequality above and obtain
\begin{align}
\Delta
\ge \frac{\left( (k+1)^2n^2 - k(k^3+2k^2-k+2)W_{t'}n +2k^3(k+1)W_{t'}^2 \right)W_{t'}n}{6k^2(k+1)^2 (n-W_{t'})} - 1201k^4 n, \notag
\end{align}
which is greater than $0$ when $n$ is sufficiently large, a contradiction.
\end{proof}

Since $\partial\mathcal{H}$ is $K_{k+1}$-free and $|G| = t_{2}(n,k)$,
by Tur\'{a}n's theorem, $G \cong T_{2}(n,k)$.
The rest part of the proof is basically repeating the argument (starting from Claim~\ref{claim-cancel-3-L(v)-disjoint-bipartite})
in the proof of Lemma~\ref{LEMMA:k-partite-cancellative-3-graph-stability}, and we omit it here.
\end{proof}
%%%%%%%%%%%%%%%%%%%%%%%%%%%%%%%%%%%%%%%%%%%%%%%%%%%%
%%%%%%%%%%%%%%%%%%%%%%%%%%%%%%%%%%%%%%%%
\section{Concluding remarks}\label{SEC:remarks}
We showed that the cancellative triple systems and the Steiner triple systems are closely related to each other
by proving that every cancellative triple system whose shadow density is close to $(k-1)/k$ and
edge density is close to $(k-1)/k^2$ is structurally close to a balanced blowup of some Steiner triple system on $k$ vertices
for every $k\in 6\mathbb{N}+\{1,3\}$ and $k\ge 3$.
Moreover, using this stability result we proved that the feasible region function $g(\mathcal{T}_3)$ of $\mathcal{T}_{3}$
has infinitely many local maxima.

$\bullet$
It would be interesting to explore whether there are similar relations between cancellative $r$-graphs and $r$-uniforms designs for $r\ge 4$.
The case $r= 4$ is of particular interest since the stability property and the maximum size of an $n$-vertex cancellative $4$-graph
are already well studied \cite{SI87,PI08,LMR2}.

$\bullet$
Theorem~\ref{THM:local-max-cancelltive-3-fesible-region}
shows that the point $\left(\frac{k-1}{k},\frac{k-1}{k^2}\right)$ is a local maximum of $g(\mathcal{T}_{3})$
for every $k\in 6\mathbb{N}+\{1,3\}$ and $k\ge 3$.
In particular, $g(\mathcal{T}_3)$ of $\mathcal{T}_{3}$ has infinitely many local maxima.
This indicates that the shape of the feasible region $\Omega(\mathcal{F})$ might be quite wild for general families $\mathcal{F}$.
On the other hand, a complete determination of $g(\mathcal{T}_{3})$ seems hard and it would be interesting to see whether
methods that are used to solve the clique density problem \cite{RA08,NI11,RE16} can be applied here.

$\bullet$
A nontrivial (i.e. the first coordinate is not on the boundary of the interval ${\rm proj}\Omega(\mathcal{F})$) 
local minimum (if it exists) of a feasible region function seems more mysterious.
It would be interesting to find an explicit nontrivial local minimum of $g(\mathcal{F})$ for some family $\mathcal{F}$.

%%%%%%%%%%%%%%%%%%%%%%%%%%%%%%%%%%%%%%%%%%%%%%%%%%%
\section{Acknowledgement}
We are very grateful to Dhruv Mubayi for inspiring discussions on this project,
and many thanks to Sayan Mukherjee for carefully reading the manuscript and
for providing many helpful suggestions.

%%%%%%%%%%%%%%%%%%%%%%%%%%%%%%%%%%%%%%%%%%%%%%%%%%%%
\bibliographystyle{abbrv}
\bibliography{feasible_region}
\end{document}